\documentclass[a4paper,11pt]{article}
\usepackage[utf8]{inputenc}
\usepackage{amsmath}
\usepackage{amsfonts}
\usepackage{amssymb}
\usepackage{amsthm}
\usepackage[english]{babel}
\usepackage{fontenc}
\usepackage{pgfplots}
\pgfplotsset{compat=1.16}
\usepackage{braket}
\usepackage{caption}
\usepackage{caption}
\usepackage{subcaption}
\usepackage{enumitem}
\usepackage{units}
\usepackage{mathtools}
\usepackage{accents}
\usepackage{algorithm}
\usepackage{algorithmic}
\usepackage{mathrsfs}
\usepackage{hyperref}
\mathtoolsset{showonlyrefs}
\usepackage{dsfont}
\usepackage[numbers,sort]{natbib}
\usepackage[hmargin=2.5cm,vmargin=2.5cm,paper=a4paper]{geometry}
\setlength{\parskip}{2pt}

\newcommand{\R}{\mathbb R}

\newcommand{\sign}{\operatorname{sign}}

\newcommand{\ic}{\,\Box\,}

\newcommand{\eps}{\varepsilon}

\newcommand{\BV}{\mathrm{BV}}

\newcommand{\dd}{\, \mathrm{d}}
\newcommand{\TV}{\operatorname{TV}}

\newcommand{\TGV}{\operatorname{TGV}}
\newcommand{\ext}{\operatorname{Ext}}

\newcommand{\Lc}{\mathcal{L}}
\newcommand{\Cc}{\mathcal{C}}

\newcommand\restr[2]{{
\left.\kern-\nulldelimiterspace #1 \vphantom{\big|} \right|_{#2} 
}}

\DeclareMathOperator*{\argmin}{arg\,min}
\makeatletter
\newcommand*\bigcdot{\mathpalette\bigcdot@{.5}}
\newcommand*\bigcdot@[2]{\mathbin{\vcenter{\hbox{\scalebox{#2}{$\m@th#1\bullet$}}}}}
\makeatother

\newtheorem{theorem}{Theorem}
\newtheorem{prop}{Proposition}
\newtheorem{ass}{Assumption}

\newtheorem{lemma}{Lemma}
\newtheorem{coroll}{Corollary}
\theoremstyle{definition}
\newtheorem{definition}{Definition}
\newtheorem{remark}{Remark}

\usepackage{comment}
\numberwithin{equation}{section}

\begin{document}
\title{Extremal points of total generalized variation balls in 1D: characterization and applications\footnotetext{2020 Mathematics Subject Classification: 46A55, 90C49, 65J20, 52A40.}\footnotetext{Keywords: total generalized variation, regularization, extremal points, non-smooth optimization, sparsity.}}
\author{Jos\'e A. Iglesias, Daniel Walter\thanks{Johann Radon Institute for Computational and Applied Mathematics (RICAM), Austrian Academy of Sciences, Linz, Austria (\texttt{jose.iglesias{@}ricam.oeaw.ac.at, daniel.walter@oeaw.ac.at)}}}
\date{}
\maketitle
\begin{abstract}
The total generalized variation (TGV) is a popular regularizer in inverse problems and imaging combining discontinuous solutions and higher order smoothing. In particular, empirical observations suggest that its order two version strongly favors piecewise affine functions. In the present manuscript, we formalize this statement for the one-dimensional TGV-functional by characterizing the extremal points of its sublevel sets with respect to a suitable quotient space topology. These results imply that 1D TGV-regularized linear inverse problems with finite dimensional observations admit piecewise affine minimizers. As further applications of this characterization we include precise first-order necessary optimality conditions without requiring convexity of the fidelity term, and a simple solution algorithm for TGV-regularized minimization problems.
\end{abstract}

The total variation has been a widely considered prior for regularization of inverse problems for several decades. A particularly popular application is denoising of natural images since the seminal work \cite{RudOshFat92}. However, it is arguably even better suited to more involved situations where nearly piecewise constant solutions are expected, like recovery of material inclusions in various physical inverse problems (see for example \cite{FreClaSch10, DonGoeTor15}). The reasons for its success are varied. On the one hand, it typically produces solutions with discontinuities, which can be thought to correspond to edges of objects in the recovered images, or sharp changes of material parameters. On the other, its formulation is purely local and in terms of derivatives, which in particular enables a great amount of rigorous analysis of the resulting solutions. 
One of the main drawbacks of the total variation, however, is the effect generally known as staircasing. Inputs which are not piecewise constant but otherwise very smooth tend to produce solutions with many discontinuities of small amplitude, particularly in the presence of significant noise. A popular cure for this problem is the use of regularizers involving higher order derivatives, which for example can be made to not penalize linear functions. Among such approaches, arguably the most successful is the total generalized variation (TGV) first introduced in \cite{BreKunPoc10}. 

A clearly desirable feature of TGV is that it manages to introduce higher order regularization and avoid staircasing while still allowing sharp discontinuities in the solutions. Also of paramount importance is that, unlike other models satisfying these first requirements, it admits a relatively straightforward predual formulation. In turn, this allows the use of convex nonsmooth optimization tools for its numerical solution, like the ubiquitous \cite{ChaPoc11} which was published around the same time. On the analysis side, topics like existence of minimizers and relations to regularization theory are well explored, see \cite{BreVal11, BreHol14, BreHol20}. However, more detailed characterizations of the behavior of solutions are significantly more scarce and with few exceptions \cite{Val17} they treat the one dimensional case and denoising only \cite{BreKunVal13, PapBre15, PoeSch15}.

In this article, we consider the one-dimensional~$\TGV$-functional
\begin{align*}
\operatorname{TGV}(u) = \inf_{w } \alpha \|Du-w\|_M + \beta \|Dw\|_M
\end{align*}
on an interval~$(0,T)$ from a convex geometry point of view. Here,~$\alpha, \beta >0$ are suitable regularization parameters and~$\|\cdot\|_M$ is the Radon norm on~$(0,T)$, i.e., the dual to the uniform norm. Denoting by~$\Lc$ the space of affine linear functions, we show that the set of equivalence classes
\begin{align*}
\mathcal{B}= \left\{\,u\;|\;\TGV(u)\leq 1\,\right\}+\mathcal{L}
\end{align*} 
is compact with respect to the quotient space topology and can thus be written as the closure of the convex hull of its extremal points~$\operatorname{Ext} \mathcal{B}$, by the Krein-Milman theorem.
The main result of the manuscript gives a precise characterization of~$\ext \mathcal{B}$ as the equivalence classes corresponding to two distinct sets of functions. More in detail, we have 
\begin{align*}
\ext \mathcal{B} =\big(\left\{\,u\;|\; Du=\pm \delta_x/\alpha,~x \in(0,T) \,\right\}  \cup \left\{\,u\;|\; D^2u=\pm \delta_x/\beta,~\operatorname{dist}(x, \partial (0,T))>\beta/\alpha \,\right\} \big)+\mathcal{L}.\end{align*}
Most surprisingly, $\ext \mathcal{B}$ contains the equivalence classes of~\textit{all} piecewise constant functions with a single jump, i.e.,
\begin{align*}
\left\{\,u\;|\; Du=\pm \delta_x/\alpha,~x \in(0,T) \,\right\}+\mathcal{L}
\end{align*}   
while those corresponding to piecewise affine ones are only included if the corresponding kink has a large enough distance to the boundary,
\begin{align*}
\left\{\,u\;|\; D^2u=\pm \delta_x/\beta,~\operatorname{dist}(x, \partial (0,T))>\beta/\alpha \,\right\}+\Lc.
\end{align*}
This characterization has consequences in both the analysis and optimization aspects related to the~$\TGV$-regularizer. Within the first, we derive precise necessary first-order optimality conditions and sufficient conditions to ensure the existence of~\textit{sparse}, i.e.,~piecewise affine, solutions to minimization problems incorporating the~$\TGV$-functional. In particular, based on convex representer theorems, we show that linear inverse problems  with finite-dimensional observation and~$\TGV$-regularization admit such minimizers. These results are not constrained to denoising problems and confirm the intuition that TGV-regularized solutions are encouraged to be piecewise affine. 

On the optimization side, this characterization enables us to formulate and analyze a minimization algorithm that iteratively approximates solutions to~\eqref{def:minprob} by conic combinations of representatives of extremal points. This means that every iterate is piecewise affine. In every iteration, the method alternates between ``adding'' a new extremal point to the current iterate and updating the nonnegative weights in the conic combination via solving a finite dimensional problem with~$\ell_1$-regularization. In particular, the proposed method completely avoids, possibly complicated, evaluations of the~$\TGV$-functional. Moreover, its practical realization does not require a discretization of the underlying interval, meaning that the positions for the discontinuities of the function and its first derivative are found directly and not constrained to be on a discrete grid. We give a preliminary convergence analysis for the method showing subsequential convergence of iterates and a convergence rate in terms of an adequate measure of non-stationarity which dominates the suboptimality of the iterates in the convex case.

As mentioned, at present we are only able to treat the one-dimensional case. In the same spirit as previous works, this serves as a simplified setting in which to infer characteristics of the solutions with the hope that these could be generalized to higher dimensions. However, the one-dimensional case also finds applications in its own right, like in \cite{ReyLoa19} where it is applied for data assimilation for the Burgers equation, whose solutions may contain shocks. Moreover, in a number of works in signal processing \cite{Sel15, OngJac15, KurYamYam18, KitConHir19} priors favoring discontinuous piecewise affine functions are considered for time-domain signals. This motivates the numerical example in our last section. In it, our solution algorithm is applied to the inverse problem of reconstructing a piecewise affine signal from the values of its Fourier transform at a small number of frequencies. This is an archetypical example of a regularization problem with few measurements where a very sparse solution exists in light of our theory, but with the added challenge that the measurements made and the features to be reconstructed are of very different nature.

\paragraph*{Organization of the paper.} In Section \ref{sec:framework}, after recalling the definition of the TGV-functional, we detail how it can be considered in an adequate quotient space whose predual we characterize. In Section \ref{sec:extremals} we prove the announced characterization of extremal points. Section \ref{sec:consequences} deals with necessary first-order optimality conditions and existence of sparse minimizers. In Section \ref{sec:algorithm} we present the minimization algorithm based on extremal points, which is then applied in Section \ref{sec:deconvol} to the test case of reconstruction from few Fourier measurements.

\section{Functional space framework}\label{sec:framework}
Throughout the paper, we work with the total generalized variation of order two with parameters $\alpha, \beta>0$, which is defined as
\begin{equation}\label{eq:tgvsupdef}\TGV_{\alpha, \beta}(u):= \sup \left\{ \int_0^T u\, v'' \, \middle\vert \, v \in C^{\infty}_c(0,T), \, \|v\|_C \leq \alpha, \, \|v'\|_C \leq \beta\right\},\end{equation}
 where $\|\cdot\|_C$ denotes the uniform norm on the space $C_0(0,T)$ of continuous functions vanishing at the boundary. Whenever possible we omit the dependence on the parameters in the notation of $\TGV$ for the sake of brevity. Moreover, according to~\cite[Thm.~3.1]{BreVal11}, we have
\begin{equation}\label{eq:tgvinfdef}
\operatorname{TGV}(u) = \inf_{w \in \mathrm{BV}(0,T)} \alpha \|Du-w\|_M + \beta \|Dw\|_M ,\end{equation}
where by~$\BV(0,T)$ we refer to the space of functions of bounded variation on the interval $(0,T)$ and~$\|\cdot\|_M$ denotes the canonical norm on the space of Radon measures~$M(0,T)=(C_0(0,T))^\ast$. Note that~$\BV(0,T)$ can be seen as the space of $L^1$ functions~$u$ whose distributional derivatives~$Du$ are elements of $M(0,T)$. Abbreviating~$\TV(u)\coloneqq \|Du\|_M$, the corresponding norm is given by
\begin{align*}
\|u\|_{\operatorname{BV}}=\|u\|_{L^1}+ \TV(u) \quad \text{for all}~u \in \BV(0,T).
\end{align*}
For further information on these spaces we refer the reader to Chapters 1 and 3 of \cite{AmbFusPal00}. 


Directly from the definition \eqref{eq:tgvsupdef}, we see that for all $c, d \in \R$ one has $\TGV(u + cx + d) = \TGV(u)$. Accordingly, we denote the subspaces of constant and affine linear functions by
\begin{align*}
\Cc &\coloneqq \left\{\,u \in \mathrm{BV}(0,T)\;\middle\vert\;u= c\cdot 1_{(0,T)},~c \in \R \,\right\}, \text{ and }\\
\Lc &\coloneqq \big\{\,u \in \mathrm{BV}(0,T) \;\big\vert\; u(x)=cx+d, \, c,d \in \R\big\} = \big\{\,u \in \mathrm{BV}(0,T) \;\big\vert\; Du \in \Cc\,\big\},
\end{align*}
so that $\TGV(u + \ell) = \TGV(u)$ for all $u \in \BV(0,T)$ and $\ell \in \Lc$. As we will see it is beneficial to consider $\TGV$ on the corresponding quotient space~$\BV(0,T)/\Lc$:
\begin{prop}\label{prop:quotientstuff} The total generalized variation $\TGV$ is an equivalent norm on the quotient space $\BV(0,T)/\Lc$, which admits the dual representation
\begin{equation}\label{eq:predual}\begin{gathered}\BV(0,T)/\Lc = \big(C_{0,a}(0,T)\big)^\ast, \text{ with }C_{0,a}(0,T):=\left\{ p \in C_0(0,T) \; \middle\vert \; \int_0^T p \dd x = 0 \right\} \text{ and }\\\langle u + \Lc, p\rangle_{\big(BV(0,T)/\Lc\,,\, C_{0,a}(0,T)\big)} = \int_{(0,T)} p \dd Du.\end{gathered}\end{equation}
In particular, the unit ball 
\begin{equation}\label{eq:unitball} \mathcal{B} := \big\{ u + \Lc \in \BV(0,T)/\Lc \;\big\vert\; \TGV(u) \leq 1 \big\}\end{equation}
is sequentially compact with respect to the corresponding weak-* topology in $\BV(0,T)/\Lc$.
\end{prop}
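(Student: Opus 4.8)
The plan is to prove the three assertions in turn --- that $\TGV$ is a norm on $\BV(0,T)/\Lc$ equivalent to the quotient norm, that $\BV(0,T)/\Lc$ is the dual of $C_{0,a}(0,T)$ for the stated pairing, and that $\mathcal B$ is weak-* sequentially compact --- the first carrying the analytic content and the others following by soft functional analysis. For the first, I would reduce everything to the derivative: starting from~\eqref{eq:tgvsupdef} and integrating by parts twice (legitimate since $v\in C^\infty_c(0,T)$ while $u\in L^1(0,T)$ has $Du\in M(0,T)$), one gets
\[
\TGV(u)=\sup\Big\{\int_{(0,T)} q\dd Du \;\Big|\; q=v',\ v\in C^\infty_c(0,T),\ \|v\|_C\ls\alpha,\ \|v'\|_C\ls\beta\Big\},
\]
so $\TGV$ depends only on $Du$, is a seminorm on $\BV(0,T)$ as a supremum of a symmetric family of linear functionals, and vanishes on $\Lc$, hence descends to a seminorm on $\BV(0,T)/\Lc$. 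Writing $\lambda$ for the Lebesgue measure on $(0,T)$ and $\|\,\cdot\,\|_{\BV/\Lc}$ for the quotient norm, choosing $w\equiv c$ constant in~\eqref{eq:tgvinfdef} gives at once $\TGV(u)\ls\alpha\|Du-c\lambda\|_M=\alpha\TV(u-cx)\ls\alpha\|u-cx-d\|_{\BV}$ for all $c,d\in\R$, hence $\TGV(u)\ls\alpha\|u+\Lc\|_{\BV/\Lc}$. For the reverse inequality I would take $w$ nearly optimal in~\eqref{eq:tgvinfdef}, split $w=\bar w+\tilde w$ into its mean and a zero-mean remainder, use the one-dimensional Poincar\'e inequality $\|\tilde w\|_{L^1}\ls T\|Dw\|_M$, set $\ell(x):=\bar w\,x+d$ with $d$ the mean of $u-\bar w\,x$, and estimate $\TV(u-\ell)\ls\|Du-w\lambda\|_M+\|\tilde w\|_{L^1}$ and $\|u-\ell\|_{L^1}\ls T\,\TV(u-\ell)$ (Poincar\'e again), obtaining $\|u+\Lc\|_{\BV/\Lc}\ls(1+T)(\alpha^{-1}+T\beta^{-1})\,\TGV(u)$. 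These two bounds make $\TGV$ positive definite on $\BV(0,T)/\Lc$, hence an equivalent norm, and $(\BV(0,T)/\Lc,\TGV)$ a Banach space since $\Lc$ is finite-dimensional, hence closed, in the complete space $\BV(0,T)$.

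For the predual, note that $C_{0,a}(0,T)$ is the kernel of the nonzero bounded functional $p\mapsto\int_0^T p\dd x$ on $C_0(0,T)$, hence a closed hyperplane, so its annihilator in $M(0,T)=(C_0(0,T))^\ast$ is one-dimensional; since it contains $\lambda$, it equals $\R\lambda$, and the standard subspace/quotient duality gives $(C_{0,a}(0,T))^\ast=M(0,T)/\R\lambda$ with pairing $\langle\mu+\R\lambda,p\rangle=\int_{(0,T)}p\dd\mu$. The distributional derivative $D\colon\BV(0,T)\to M(0,T)$ is surjective --- every finite Radon measure $\mu$ on $(0,T)$ equals $Du$ for $u(x):=\mu((0,x])\in L^\infty(0,T)\subset L^1(0,T)$ --- maps $\Lc$ onto $\R\lambda$, and has kernel contained in $\Lc$, so it induces a linear bijection $\overline D\colon\BV(0,T)/\Lc\to M(0,T)/\R\lambda$, which is bounded with bounded inverse by the estimates above (one has $\|\overline D(u+\Lc)\|_{M/\R\lambda}=\inf_c\TV(u-cx)\ls\|u+\Lc\|_{\BV/\Lc}$, and the converse follows from the Poincar\'e bound). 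Composing the two identifications turns the pairing into $\langle u+\Lc,p\rangle=\int_{(0,T)}p\dd Du$, which is exactly~\eqref{eq:predual}, and the induced weak-* topology on $\BV(0,T)/\Lc$ is the one in which $u_n+\Lc\wksto u+\Lc$ precisely when $\int_{(0,T)}p\dd Du_n\to\int_{(0,T)}p\dd Du$ for every $p\in C_{0,a}(0,T)$.

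Finally, $C_{0,a}(0,T)$ is separable, being a subspace of the separable space $C_0(0,T)$, so by Banach--Alaoglu the closed balls of its dual $\BV(0,T)/\Lc$ are weak-* sequentially compact. By the first step $\TGV=\sup_q\langle\,\cdot\,,q\rangle$ is a supremum of weak-* continuous functionals (one per admissible $q\in C_{0,a}(0,T)$), hence weak-* lower semicontinuous, so $\mathcal B=\{\,U\mid\TGV(U)\ls1\,\}$ is weak-* closed, and by the norm equivalence it lies inside a multiple of the dual unit ball; a weak-* closed subset of a weak-* sequentially compact set is weak-* sequentially compact, giving the claim. I expect the only real obstacle to be the lower bound $\|u+\Lc\|_{\BV/\Lc}\lesssim\TGV(u)$: it is the sole place where genuinely one-dimensional analysis is used, namely the mean/oscillation decomposition of the auxiliary field $w$ and the Poincar\'e inequality on an interval, whereas the annihilator computation, surjectivity of $D$ and Banach--Alaoglu with separability are routine. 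One should also keep in mind throughout that the interval is open, so $M(0,T)=(C_0(0,T))^\ast$ consists of finite Radon measures with no boundary mass and coincides exactly with $\{\,Du\mid u\in\BV(0,T)\,\}$.
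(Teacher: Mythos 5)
Your proposal is correct and follows essentially the same route as the paper's proof: the upper bound by testing \eqref{eq:tgvinfdef} with a constant $w$, the lower bound via a double application of the one-dimensional Poincar\'e--Wirtinger inequality, identification of the annihilator of $C_{0,a}(0,T)$ with $\R\lambda$ to realize $\BV(0,T)/\Lc$ as $M(0,T)/\R\lambda=(C_{0,a}(0,T))^\ast$ through the derivative map, and Banach--Alaoglu with a separable predual for sequential compactness. The only (welcome) refinements are the hyperplane shortcut for the annihilator and the explicit verification that $\mathcal B$ is weak-* closed via lower semicontinuity of the supremum representation, a step the paper leaves implicit.
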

\begin{proof}
To begin with, we notice that both $\Cc$ and $\Lc$ are closed subspaces of $\BV(0,T)$ with respect to the weak-* topology. Moreover, we may write
\[\BV(0,T)/\Lc = \big( \BV(0,T)/\Cc \big) / ( \Lc / \Cc).\]
In the one dimensional situation we consider, the map
\begin{align*}
\Lambda:BV(0,T)/\Cc &\to M(0,T)\\
u + \Cc &\mapsto Du
\end{align*}
is clearly continuous by the definition of the corresponding norms, and has as an inverse the map assigning to each $\mu \in M(0,T)$ the equivalence class $\big[x \mapsto \mu\big((0,x)\big)\big]+\Cc$. Using the bounded inverse theorem, this implies that it is an isomorphism of Banach spaces. Furthermore, the image $\Lambda( \Lc / \Cc)=\R \mathscr{L}$ is the set of scalar multiples of the Lebesgue measure restricted to $(0,T)$, which we denote by $\mathscr{L}$. By results for general Banach spaces (an application of Hahn-Banach, essentially) we have \cite[Prop.~2.7]{FabEtAl01} that if $C_{0,a}(0,T)^\perp = \R \mathscr{L}$ then $M(0,T)/(\R \mathscr{L}) = \big(C_{0,a}(0,T)\big)^\ast$. To see the equality for the annihilator, consider 
\[\mu \in M(0,T) \text{ satisfying } \int_{(0,T)} p \dd \mu = 0 \text{ for all }p \in C_{0,a}(0,T),\]
and fix an arbitrary function $q_0 \in C_0(0,T)$ with $\int_0^T q_0 \dd x = 1$. Now, for each $q \in C_0(0,T)$, just consider the function
\[C_{0,a}(0,T) \ni p = q - \left(\int_0^T q \dd x\right) \, q_0, \text{ so that }\int_{(0,T)} q \dd \mu = \left(\int_0^T q \dd x\right) \left(\int_{(0,T)} q_0 \dd \mu\right).\]
Since the number $\int_{(0,T)} q_0 \dd \mu$ is independent of $q \in C_0(0,T)$ and by the Riesz representation theorem $M(0,T)=\big(C_0(0,T)\big)^\ast$, this means that
\[\mu = \left( \int_{(0,T)} q_0 \dd \mu \right)\mathscr{L}.\]
To see that $\TGV$ is an equivalent norm on $\BV(0,T)/\Lc$, fix~$u \in \BV(0,T)$ as well as~$w_u$ with
\begin{align*}
\inf_{w \in \mathrm{BV}} \alpha \|Du-w\|_M + \beta \|Dw\|_M 
&=\alpha \|Du-w_u\|_M + \beta \|Dw_u\|_M.
\end{align*}
Now, we can first use the Poincar\'e-Wirtinger inequality \cite[Thm.~3.44, Rem.~3.45]{AmbFusPal00} in $\BV(0,T)$ twice to obtain that
\begin{equation}\label{eq:tgvlowerbound}\begin{aligned}
\TGV(u) &= \inf_{w \in \mathrm{BV}} \alpha \|Du-w\|_M + \beta \|Dw\|_M \\
&=\alpha \|Du-w_u\|_M + \beta \|Dw_u\|_M\\
&\geq \alpha \|Du-w_u\|_M + CT\beta \big\|w_u- T^{-1}\textstyle{\int_0^T}w_u \big\|_{L^1}\\
&\geq \min(\alpha, CT\beta) \big( \|Du-w_u\|_M + \big\|w_u- T^{-1}\textstyle{\int_0^T}w_u \big\|_{L^1} \big)\\
&\geq \min(\alpha, CT\beta) \inf_{c \in \R} \big( \|Du-w_u\|_M + \big\|w_u-c\big\|_{L^1} \big)\\
&\geq \min(\alpha, CT\beta) \inf_{c \in \R} \|D(u-cx)\|_M\\
&\geq CT \min(\alpha, CT\beta) \inf_{c \in \R} \big\|u-cx-T^{-1}\textstyle{\int_0^T}(u-cx)\big\|_{L^1}\\
&\geq CT \min(\alpha, CT\beta) \inf_{c,d \in \R} \big\|u-cx-d\big\|_{L^1},\\
\end{aligned}\end{equation}
with which we can estimate
\begin{align*}\|u+\Lc\|_{\BV/\Lc} &= \inf_{\ell \in \Lc} \|u-\ell\|_{\BV} = \inf_{c,d \in \R} \|u-cx-d\|_{\BV} \\
&= \inf_{c,d \in \R} \|u-cx-d\|_{L^1} + \|D(u-cx)\|_M \leq \frac{2}{CT \min(\alpha, CT\beta)} \TGV(u).\end{align*}
On the other hand, testing the definition of $\TGV$ with constant $w \equiv c$ tells us that
\[\TGV(u) \leq \inf_{c \in \R} \alpha \|Du-c\|_M = \alpha \inf_{c \in \R} \|D(u-cx)\|_M = \inf_{\ell \in \Lc} \|D(u-\ell)\|_M \leq \|u+\Lc\|_{\BV/\Lc}.\]
We remark that this equivalence of norms is not specific to the one-dimensional case, and in fact a similar estimate is proved in \cite[Thm.~3.3]{BreVal11}.

With the equivalence of norms and having identified a separable predual space, compactness of $B$ can be characterized by sequences and follows by an application of the Banach-Alaoglu theorem.
\end{proof}

\section{Extremal points of~\texorpdfstring{$\TGV$}{TGV}}\label{sec:extremals}
In this section, we consider the extremal points of the set~$\mathcal{B}$:
\begin{definition}
An element~$u \in \mathcal{B}$ is called an extremal point of~$\mathcal{B}$ if
\begin{align*}
u=u_1+s(u_2-u_1)\ \text{for some}~u_1,u_2 \in \mathcal{B}\text{ and }s \in (0,1) \text{ implies } u=u_1=u_2.
\end{align*}
\end{definition} 
Since~$\mathcal{B}$ is weak* compact in~$\BV(0,T)/\mathcal{L}$, the set of all extremal points of~$\mathcal{B}$, denoted by~$\ext \mathcal{B}$ is nonempty due to the Krein-Milman theorem.
We prove the following characterization:
\begin{theorem}\label{thm:tgvextremals}
The extremal points of $\mathcal{B}= \big\{ u + \Lc \in \BV(0,T)/\Lc \;\big\vert\; \TGV(u) \leq 1 \big\}$ satisfy
\begin{equation}\label{eq:maybeextremals}\ext \mathcal{B} = \pm \left( \left\{ \frac{1}{\alpha}S_{x_0} \;\Big\vert\;  x_0 \in (0,T)\right\} \cup \left\{ \frac{1}{\beta}K_{x_1} \;\Big\vert\;  \operatorname{dist}(x_1, \partial(0,T)) > \frac{\beta}{\alpha}\right\} \right ) + \Lc,\end{equation} where, denoting by $1_{(a,b)}$ the characteristic function of an interval $(a,b)$,
\[S_{x_0}=1_{(x_0, T)}\text{ and }K_{x_1}=\begin{cases}1_{(0, x_1)} (x_1-x) &\text{ if }x_1 < T/2 \\ 1_{(x_1, T)} (x-x_1) &\text{ if }x_1 \geq T/2.\end{cases}\]
\end{theorem}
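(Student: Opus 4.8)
The plan is to prove the characterization \eqref{eq:maybeextremals} by a two-sided argument: first showing that every candidate in the right-hand side is indeed an extremal point, and then showing that no other equivalence class can be extremal. For the first (easier) inclusion, I would take a candidate, say $\tfrac{1}{\alpha}S_{x_0} + \Lc$, assume it is written as $s(u_1+\Lc) + (1-s)(u_2+\Lc)$ with both $u_i + \Lc \in \mathcal{B}$, and exploit the structure of $\TGV$ together with the predual pairing $\langle u + \Lc, p\rangle = \int_{(0,T)} p \dd Du$ from Proposition \ref{prop:quotientstuff}. The key is to produce a witness $p \in C_{0,a}(0,T)$ with $\|p\|_C \le \alpha$, $\|p'\|_C \le \beta$ (so that $p$ is admissible in \eqref{eq:tgvsupdef}) which exposes the face containing our candidate: for $S_{x_0}$ one wants $p$ with $p(x_0) = \pm\alpha$ and slope staying within $[-\beta,\beta]$, while $\int_0^T p = 0$; the strict inequality in $\TGV(u_i)\le 1$ together with the exposing property forces $Du_1 = Du_2$ on the relevant support. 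A similar construction works for $\tfrac1\beta K_{x_1}$, and here the constraint $\operatorname{dist}(x_1,\partial(0,T)) > \beta/\alpha$ is exactly what is needed for such an exposing $p$ to exist while respecting $\|p\|_C \le \alpha$ — the affine piece of the dual certificate needs enough room before it hits the boundary at height zero.

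**The converse inclusion.**

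For the harder direction — that nothing else is extremal — I would argue by contraposition. Given $u + \Lc \in \ext\mathcal{B}$, first observe by homogeneity/convexity that $\TGV(u) = 1$ (otherwise $u$ is a strict convex combination of $0$ and a rescaling). Using the infimum formulation \eqref{eq:tgvinfdef}, pick an optimal $w_u \in \BV(0,T)$ realizing $\alpha\|Du - w_u\|_M + \beta\|Dw_u\|_M = 1$, and split into cases according to whether the two measures $\mu := Du - w_u$ and $\nu := Dw_u$ are themselves "atomic" or not. The strategy is a decomposition argument: if $\mu$ is not $\pm$ a single Dirac (modulo the structure allowed by optimality), one can split $\mu = \mu_1 + \mu_2$ with disjoint supports, lift this to a splitting of $u + \Lc$ into two distinct elements of $\mathcal{B}$ with the same convex-combination value, contradicting extremality — this is the standard "extremal points of the TV ball are Diracs / indicators" mechanism, cf. the classical results for $\|Du\|_M \le 1$. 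The subtlety is that the two terms interact through the shared $w_u$, so the splitting of $Du$ must be done compatibly with a splitting of $w_u$; one has to track how optimality in \eqref{eq:tgvinfdef} constrains $\mu$ and $\nu$ jointly (e.g. $\nu$ concentrated where $|w_u|$ has "used up" its budget, sign conditions linking $\mu$ and $\nu$). This reduces to: either $Du = \pm\delta_{x_0}/\alpha$ (the $S_{x_0}$ case, with $w_u = 0$), or $w_u \ne 0$ and then $Dw_u = \pm\delta_{x_1}/\beta$ with $Du = w_u$, i.e. $D^2 u = \pm\delta_{x_1}/\beta$ (the $K_{x_1}$ case).

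**Handling the boundary condition and the main obstacle.**

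The last piece is to show that a kink function $\tfrac1\beta K_{x_1}$ with $\operatorname{dist}(x_1,\partial(0,T)) \le \beta/\alpha$ is \emph{not} extremal: here $w_u$ with $Dw_u = \pm\delta_{x_1}/\beta$ is not the unique minimizer, and in fact one can "shave off" the part of $w_u$ near the boundary by transferring mass from the $\beta\|Dw\|_M$ term to the $\alpha\|Du - w\|_M$ term at no cost — when $x_1$ is close enough to $\partial(0,T)$, the indicator correction is cheaper, and this gives a genuine convex decomposition. Concretely, $\tfrac1\beta K_{x_1}$ can be written as a combination of a scaled $S_{x_1}$-type jump and another admissible element, and one verifies both lie in $\mathcal{B}$. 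I expect the main obstacle to be the joint-optimality bookkeeping in the converse direction: making precise, via duality for \eqref{eq:tgvinfdef} (a dual certificate $p$ with $\|p\|_C\le\alpha$, $\|p'\|_C\le\beta$, attaining equality against both $Du-w_u$ and $Dw_u$ in the appropriate sense), exactly which pairs $(\mu,\nu)$ of measures are "indecomposable," and then checking that indecomposability forces each to be a single signed Dirac with the stated location constraint. The clean way to organize this is probably to first establish the dual certificate / complementarity characterization of optimal $(u,w_u)$ pairs as a lemma, then run the decomposition argument purely in terms of the supports and signs dictated by that certificate.
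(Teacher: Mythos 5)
Your overall architecture (reduce to two families of candidates, verify each family, handle the boundary threshold separately) matches the paper's, but two of your key mechanisms have genuine gaps.

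First, the converse direction. You correctly identify that the hard part is ruling out the ``mixed'' case where both $\mu = D\bar u - w_{\bar u}$ and $\nu = Dw_{\bar u}$ are nontrivially active, but you defer this to unspecified ``joint-optimality bookkeeping,'' and that deferral hides the main idea. The paper resolves it with a clean abstract lemma (Lemma \ref{lem:infconvsep}): writing $\TGV$ as an exact infimal convolution $u = v_0 + h_0$ with $\alpha\TV(v_0) + \beta\TV(Dh_0) = 1$ (Lemma \ref{lem:tgvinfconv}), one-homogeneity gives immediately that
\begin{equation}
u = \alpha\TV(v_0)\,\frac{v_0}{\alpha\TV(v_0)} + \beta\TV(Dh_0)\,\frac{h_0}{\beta\TV(Dh_0)},
\end{equation}
a convex combination of two elements of $\mathcal{B}$; hence an extremal point must have one of the two components vanish (up to the degenerate collinear case), and only then does one invoke the classical ``extremal points of a TV ball are Diracs'' argument within a \emph{single} ball. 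Your plan of splitting the measure $\mu$ by disjoint supports addresses why $\mu$ must be a single Dirac but not why the two terms cannot both be active; without the infimal-convolution separation your argument does not close.

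Second, the positive direction via exposing certificates is insufficient as stated: a dual certificate $p$ with $\|p\|_C\le\alpha$, $\|p'\|_C\le\beta$ attaining the bound at the candidate shows only that the candidate lies in an exposed face, and that face need not be a singleton. The equality case $\operatorname{dist}(x_1,\partial(0,T)) = \beta/\alpha$ is the concrete counterexample to your heuristic: a valid certificate (with $P$ affine between $x_1$ and the boundary) still exists there, yet $K_{x_1}/\beta + \Lc$ is \emph{not} extremal --- the paper exhibits it explicitly as $\tfrac12 u_1 + \tfrac12 u_2$ with $u_i$ differences of kinks satisfying $\TGV(u_i)=\alpha\|Du_i\|_M=1$ (Remark \ref{rem:equalcasenot}). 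So the strict inequality is not ``exactly what is needed for the certificate to exist''; the paper instead proves extremality of the $K_{x_1}$ family by showing that for $\operatorname{dist}(x_1,\partial(0,T)) \ne \beta/\alpha$ the inner minimizer $w$ in \eqref{eq:tgvinfdef} is \emph{unique} (Proposition \ref{prop:tgvpwlinear}), which lets one pass from a convex splitting of $u$ to a convex splitting of $Dw = \pm\delta_{x_1}/\beta$ and then use extremality of Diracs in the unit ball of $M(0,T)$. You would need to supply an argument of this kind (or otherwise prove the exposed face is a singleton) for your positive direction to be complete.
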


The strategy is centered around the fact that since in one dimension we can always find primitives, $\TGV$ can be rewritten as an infimal convolution:
\begin{lemma}\label{lem:tgvinfconv} For every~$\bar{u} \in \BV(0,T)$ we have
\begin{equation}\label{eq:tgvinfconv}
\operatorname{TGV}(\bar{u})= \inf_{\substack{v,h \in \mathrm{BV}(0,T), \\ \bar{u}=v+h}} \left \lbrack\, \alpha \operatorname{TV}(v)+ \beta \operatorname{TV}(Dh) \,\right \rbrack,
\end{equation}
and if~$\bar{w} \in \BV(0,T)$ is such that 
\begin{align*}
\operatorname{TGV}(\bar{u})= \alpha \|D\bar{u}-\bar{w}\|_M+ \beta \|D\bar{w}\|_M
\end{align*}
then the infimum is attained for
\begin{equation}\label{eq:integrateminima}
\bar{v}= \bar{u}- \int^{\boldsymbol{\cdot}}_0 \bar{w}~\mathrm{d} t,~\bar{h}=\int^{\boldsymbol{\cdot}}_0 \bar{w}~\mathrm{d} t.\end{equation}
Moreover we can define $\TGV$ also in the quotient space $\BV(0,T)/\Lc$, and there it can also be written as an infimal convolution:
\begin{equation}\label{eq:tgvinquotient}\TGV(\bar u)=\TGV(\bar u + \Lc)= \inf_{\substack{v+\Lc,h+\Lc \in \mathrm{BV}(0,T)/\Lc, \\ \bar{u}=v+h}} \left \lbrack\, \alpha \inf_{\ell \in \Lc}\TV(v+\ell)+ \beta \operatorname{TV}(Dh) \,\right \rbrack.\end{equation}
\end{lemma}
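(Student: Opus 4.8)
The plan is to pass between the two formulations of TGV by integration: the infimal representation \eqref{eq:tgvinfdef} involves an auxiliary $w \in \BV(0,T)$, and we will trade it for its primitive $h = \int_0^\cdot w \dd t$. First I would note that in one dimension every $w \in \BV(0,T)$ has a primitive $h \in W^{1,1}(0,T) \subset \BV(0,T)$ with $Dh = w \, \mathrm d x$, i.e.\ $Dh$ is the absolutely continuous measure with density $w$, so that $\TV(Dh) = \|Dw\|_M$ (identifying $Dw$ with the measure derivative of the $\BV$ function $w$), while $\|Dh\|_M = \|w\|_{L^1}$. Conversely, given $h \in \BV(0,T)$ with $Dh \in \BV(0,T)$ — more precisely with $Dh$ absolutely continuous and density in $\BV$ — set $w$ to be that density. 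Under this correspondence, writing $\bar u = v + h$ with $v = \bar u - h$, one has $D\bar u - w = Dv$ in the sense of measures, so $\alpha\|D\bar u - w\|_M + \beta\|Dw\|_M = \alpha\TV(v) + \beta\TV(Dh)$. Taking the infimum over $w$ on the left, respectively over splittings $\bar u = v + h$ on the right, gives \eqref{eq:tgvinfconv}; here one must check that restricting to $w$ whose primitive lies in $\BV$ with $Dh \in \BV$ costs nothing, which holds because any competitor $w$ in \eqref{eq:tgvinfdef} with $\|Dw\|_M < \infty$ is already in $\BV(0,T)$ and automatically has this property, and conversely in \eqref{eq:tgvinfconv} a competitor with $\beta\TV(Dh) < \infty$ forces $Dh \in \BV$, hence $w \in \BV$. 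The attainment statement \eqref{eq:integrateminima} is then immediate: if $\bar w$ realizes the infimum in \eqref{eq:tgvinfdef}, the pair $(\bar v, \bar h)$ from \eqref{eq:integrateminima} satisfies $\bar u = \bar v + \bar h$ and yields exactly the value $\TGV(\bar u)$, so it is a minimizer of \eqref{eq:tgvinfconv}.

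Next I would address the quotient-space formula \eqref{eq:tgvinquotient}. That $\TGV$ descends to a well-defined functional on $\BV(0,T)/\Lc$ is already recorded in Proposition \ref{prop:quotientstuff} (indeed $\TGV(u + \ell) = \TGV(u)$ for $\ell \in \Lc$ directly from \eqref{eq:tgvsupdef}), so $\TGV(\bar u) = \TGV(\bar u + \Lc)$ is just notation. For the infimal-convolution identity in the quotient, I would start from \eqref{eq:tgvinfconv} and observe that the functional $h \mapsto \TV(Dh)$ is invariant under adding affine functions (since $D(h+\ell)$ differs from $Dh$ by a constant, which has zero total variation as a measure derivative), so the second term depends only on $h + \Lc$; meanwhile the first term, once we are allowed to shift the splitting, can be optimized over the affine ambiguity, giving $\alpha \inf_{\ell\in\Lc}\TV(v+\ell)$ — note $\inf_\ell \TV(v+\ell) = \inf_c \TV(v - cx) = \inf_c \|Dv - c\|_M$, the correct quotient seminorm. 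The content is that minimizing \eqref{eq:tgvinfconv} over all $\BV$ splittings $\bar u = v+h$ is the same as minimizing over splittings of equivalence classes and then optimizing the residual affine part: given $\bar u = v + h$ one can replace $(v,h)$ by $(v + \ell, h - \ell)$ for any $\ell \in \Lc$ without changing the sum, and this is precisely the freedom accounted for by the two inner infima. I would make this a short two-line argument by showing each side of \eqref{eq:tgvinquotient} bounds the other.

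The main obstacle — really the only place demanding care — is the measure-theoretic bookkeeping in the first correspondence: one must be precise that ``$Dw \in M(0,T)$'' (the condition $w \in \BV$) corresponds exactly to ``$Dh$ has a $\BV$ density'' and that $\TV(Dh)$ in \eqref{eq:tgvinfconv} is being read as $\|D(Dh)\|_M = \|Dw\|_M$, i.e.\ the total variation of the derivative of the density of $Dh$, not of the measure $Dh$ itself. In other words $\TV(Dh)$ is shorthand for the total variation of the absolutely continuous part's density; once this convention is fixed, which matches how the term is used throughout, the identity is a clean change of variables with no inequalities lost in either direction. Everything else is routine: the primitive exists and is unique up to an additive constant, the Poincaré–Wirtinger step is not needed here, and the quotient identity is pure algebra on the splittings.
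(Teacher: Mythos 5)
Your proposal is correct and follows essentially the same route as the paper: both proofs trade the auxiliary variable $w$ for its primitive $h$ (so that $Dv = D\bar u - w\,\mathrm{d}x$ and the two energies coincide termwise), observe that this correspondence is exhaustive up to competitors of infinite energy, and obtain the quotient formula from the invariance $\TV(Dh)=\TV(D(h+\ell))$ together with the reshuffling $(v,h)\mapsto(v+\ell,h-\ell)$. The paper merely phrases the key identity via the decomposition $D^s\bar u = D^s v$, $D^a v = D^a\bar u - Dh$ rather than as a single change of variables, which is the same computation; your measure-theoretic caveat about reading $\TV(Dh)$ as the variation of the density of $Dh$ matches the paper's implicit convention.
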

\begin{proof}
First note that since~$Dh \in \mathrm{BV}(0,T)$ we have, denoting by $D^a$ and $D^s$ the absolutely continuous and singular parts with respect to the Lebesgue measure, that $D^s h=0$ and thus
\begin{align*}
D^s h=0,~D^s \bar{u}=D^s v,~D^a v=D^a \bar{u} -Dh.
\end{align*} 
As a consequence we can rewrite
\begin{align*}
\inf_{\substack{v,h \in \mathrm{BV}(0,T), \\ \bar{u}=v+h}} \left \lbrack\, \alpha \operatorname{TV}(v)+ \beta \operatorname{TV}(Dh) \,\right \rbrack &= \inf_{Dh \in \mathrm{BV}(0,T)} \alpha \|D^s \bar{u}\|_M+ \alpha \|D^a \bar{u}-Dh\|_M+ \beta \operatorname{TV}(Dh)\\ &=  \inf_{w \in \mathrm{BV}(0,T)} \alpha \|D \bar{u}-w\|_M+ \beta \operatorname{TV}(w)
\end{align*}
where we use that for every~$w \in \mathrm{BV}(0,T)$ there is~$W \in \mathrm{BV}(0,T)$ with~$DW=w$. Moreover we have~$\bar{u}=\bar{v}+\bar{h}$ and
\begin{align*}
\alpha \operatorname{TV}(\bar{v})+ \beta \operatorname{TV}(D\bar{h})= \alpha \|D\bar{u}-\bar{w}\|_M+ \beta \|D^2\bar{w}\|_M=\operatorname{TGV}(\bar{u})
\end{align*}
finishing the proof of \eqref{eq:tgvinfconv} and \eqref{eq:integrateminima}. Finally, to deduce \eqref{eq:tgvinquotient} from \eqref{eq:tgvinfconv} we just notice that by definition $\TV(Dh) = \TV\big(D(h+\ell)\big)$ for all $\ell \in \Lc$.
\end{proof}

Then we have the following more general negative result, which allows us to reduce the possible candidates for extremal points:
\begin{lemma}\label{lem:infconvsep}Let $f,g$ be convex positively one-homogeneous functionals defined on some Banach space $X$, and
\begin{equation}\label{eq:infconvfg}(f \ic g)(u):= \inf_{u = v+w} f(v) + g(w)\end{equation}
their infimal convolution. Given $u_0 \in X$, assume that either $f(u_0)\neq 1$ or $g(u_0)\neq 1$, and $v_0, w_0$ are such that 
\[(f \ic g)(u_0)=1, \quad u_0 = v_0 + w_0, \quad f(v_0) + g(w_0)=1, \quad f(v_0)\neq 0, \quad g(w_0) \neq 0\]
so \eqref{eq:infconvfg} is exact at $u_0$ with the infimum attained by $v_0$ and $w_0$. Then $u_0$ cannot be an extremal point of the convex set 
\[\left\{ u \in X \, \middle\vert\, (f \ic g)(u)\leq 1\right\}.\]
\end{lemma}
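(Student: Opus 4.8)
The plan is to exhibit $u_0$ as a genuine convex combination of two distinct points in the sublevel set $\{(f\ic g)\le 1\}$, using the scaling freedom coming from one-homogeneity. The natural idea is to rescale $v_0$ and $w_0$ separately: for a parameter $t$ near $0$, set
\[
u_0^{\pm}(t) = (1\pm t)\,v_0 + (1\mp t)\,w_0,
\]
so that $u_0 = \tfrac12 u_0^+(t) + \tfrac12 u_0^-(t)$ for every $t$. By positive one-homogeneity and subadditivity of $f$ and $g$ we get $(f\ic g)(u_0^+(t)) \le (1+t)f(v_0) + (1-t)g(w_0) = 1 + t\bigl(f(v_0)-g(w_0)\bigr)$ when $1+t\ge 0$ and $1-t\ge 0$, and symmetrically for $u_0^-(t)$. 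Hence if $f(v_0)=g(w_0)=\tfrac12$ the two estimates both give exactly $1$ for small $|t|$, and choosing $t\in(0,1)$ small enough that $u_0^+(t)\ne u_0^-(t)$ — which holds as soon as $t\ne 0$ and $v_0-w_0\ne 0$ — shows $u_0$ is not extremal.

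The remaining work is to reduce the general hypotheses to this balanced situation. If $f(v_0)\ne g(w_0)$, one first \emph{absorbs a scalar multiple of $w_0$ into $v_0$} (or vice versa): replace the splitting $(v_0,w_0)$ by $(v_0 + s w_0,\,(1-s)w_0)$ for a suitable $s$; this keeps $u_0 = (v_0+sw_0) + (1-s)w_0$ and, by subadditivity, does not increase $f(v_0+sw_0) + g((1-s)w_0)$ above $f(v_0) + s\,f(w_0)+ \dots$ — so one must argue more carefully. The clean route is: since $(f\ic g)(u_0)=1$ with the infimum attained, the value along the one-parameter family $s\mapsto f(v_0+sw_0)+ (1-s)g(w_0)$ (defined for $s\in[0,1)$, using $g((1-s)w_0)=(1-s)g(w_0)$) is $\ge 1$ at every $s$ by optimality, and equals $1$ at $s=0$; convexity of $s\mapsto f(v_0+sw_0)$ then forces this function to be affine, and in particular $f(v_0+sw_0) = f(v_0) + s\,g(w_0)$ for $s\in[0,1)$, i.e.\ $g(w_0)$ is a right-derivative. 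Feeding $s\to\tfrac12$-type adjustments, one rebalances so that the two summands are equal; the only obstruction is if $f(v_0)=g(w_0)$ already but both equal something other than $\tfrac12$, which cannot happen since they sum to $1$. The genuinely degenerate case $f(v_0)=0$ or $g(w_0)=0$ is \emph{excluded by hypothesis}, and the case $f(u_0)=g(u_0)=1$ (which would make the above rebalancing collapse) is also excluded.

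The main obstacle I anticipate is precisely this rebalancing step: showing that from \emph{any} exact splitting with $f(v_0),g(w_0)\in(0,1)$ one can reach a splitting with $f(v_0)=g(w_0)=\tfrac12$ without leaving the set $\{(f\ic g)\le 1\}$, and that at that point the two perturbed points $u_0^{\pm}(t)$ are actually distinct (this is where $f(u_0)\ne 1$ or $g(u_0)\ne 1$ gets used — it prevents $v_0$ and $w_0$ from being forced to be parallel in a way that makes $u_0^+(t)=u_0^-(t)$). The one-homogeneity and the attainment of the infimum are the two facts doing all the heavy lifting; everything else is bookkeeping with subadditivity and convex combinations.
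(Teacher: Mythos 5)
Your core idea --- use positive one-homogeneity to write $u_0$ as a convex combination of rescaled copies of $v_0$ and $w_0$ lying in the sublevel set --- is the right one, and your treatment of the balanced case $f(v_0)=g(w_0)=\tfrac12$ is correct: there $(f\ic g)\big((1\pm t)v_0+(1\mp t)w_0\big)\le (1\pm t)f(v_0)+(1\mp t)g(w_0)=1$, and $u_0^+(t)=u_0^-(t)$ would force $v_0=w_0$, hence $f(u_0)=g(u_0)=1$, contradicting the hypothesis. However, the reduction to that balanced case --- which you yourself flag as the main obstacle --- contains a genuine gap. The function $\phi(s)=f(v_0+sw_0)+(1-s)g(w_0)$ is convex on $(-\infty,1]$, satisfies $\phi\ge 1$ by optimality of the infimal convolution, and $\phi(0)=1$; but a convex function that attains its minimum at $s=0$ is \emph{not} forced to be affine, so the identity $f(v_0+sw_0)=f(v_0)+s\,g(w_0)$ does not follow. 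Without it, the splittings $(v_0+sw_0,(1-s)w_0)$ for $s\neq 0$ generally have $f+g$ strictly larger than $1$, so this path does not stay among exact splittings and the ``rebalancing'' to $f=g=\tfrac12$ is not achieved. (Note also that simply rescaling each piece separately to value $\tfrac12$ destroys the identity $v'+w'=u_0$.)

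The fix is to drop the insistence on a symmetric $\tfrac12$--$\tfrac12$ combination, which makes the rebalancing unnecessary: since $(f\ic g)(v_0)\le f(v_0)+g(0)=f(v_0)$ and likewise $(f\ic g)(w_0)\le g(w_0)$, one-homogeneity of $f\ic g$ gives $(f\ic g)\big(v_0/f(v_0)\big)\le 1$ and $(f\ic g)\big(w_0/g(w_0)\big)\le 1$, and then
\[
u_0 \;=\; f(v_0)\,\frac{v_0}{f(v_0)} \;+\; g(w_0)\,\frac{w_0}{g(w_0)}
\]
is already a convex combination (the weights sum to $1$) of two points of the sublevel set. This is exactly the route the paper takes. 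The only remaining case is $v_0/f(v_0)=w_0/g(w_0)$, i.e.\ $v_0=c\,w_0$ with $c=f(v_0)/g(w_0)$; to rule it out one first needs the observations $f(v_0)\le g(v_0)$ and $g(w_0)\le f(w_0)$ (otherwise the splitting $(0,u_0)$ or $(u_0,0)$ would beat $(v_0,w_0)$), which combine with one-homogeneity to give $f(u_0)=g(u_0)=1$, contradicting the hypothesis. Your sketch gestures at this exclusion but does not establish the two comparison inequalities that make it work in the unbalanced parallel case.
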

\begin{proof}
First, notice that we must have $f(v_0) \leq g(v_0)$. To see this, assume for the sake of contradiction that $g(v_0) < f(v_0)$. Being convex and one-homogeneous $g$ is sublinear, so we have that
\[g(v_0+w_0)\leq g(v_0) + g(w_0) < f(v_0)+g(w_0)\]
and the pair $(v_0, w_0)$ was not optimal for the infimal convolution, since $(0, v_0+w_0)$ has lower energy. Similarly we conclude that $g(w_0) \leq f(w_0)$. 

Moreover, we claim that
\begin{equation}\label{eq:goodseparation}(f \ic g)(v_0) = f(v_0) \in (0,1),\quad (f \ic g)(w_0) = g(w_0) \in (0,1).\end{equation}
To see this, notice that if on the contrary we had for some $v_1, w_1$ with $v_0=v_1+w_1$ that
\[(f \ic g)(v_0) = f(v_1) + g(w_1) < f(v_0)\]
we would end up, using the sublinearity, with
\[f(v_0)+g(w_0) > f(v_1)+g(w_0)+g(w_1) \geq f(v_1)+g(w_0+w_1),\]
and again the pair $(v_0, w_0)$ could not have been optimal for $(f \ic g)(u_0)$. The claim for $w_0$ is again entirely similar.

Using in \eqref{eq:goodseparation} that the infimal convolution is one-homogeneous as well we arrive at
\[ (f \ic g)\left(\frac{v_0}{f(v_0)}\right)=1, \quad (f \ic g)\left(\frac{w_0}{g(w_0)}\right)=1,\]
which means that we can write 
\[u_0 = f(v_0)\frac{v_0}{f(v_0)} + g(w_0)\frac{w_0}{g(w_0)}.\]
Now if $v_0/f(v_0)\neq w_0/g(w_0)$, these two points and $u_0$ are collinear in $\left\{ u \in X \, \middle\vert\, (f \ic g)(u)=1\right\}$, and $u_0$ could not be extremal. Otherwise \[v_0=c\, w_0\text{ with }c:=f(v_0)/g(w_0),\text{ so }u_0 = (1+c)w_0=(1+c^{-1})v_0.\] 
By one-homogeneity and the first part of the proof
\begin{align*}
g(u_0)=(1+c)g(w_0) &\leq (1+c)f(w_0)=f(u_0)\text{ and }\\
f(u_0)=(1+c^{-1})f(v_0) &\leq (1+c^{-1})g(v_0)=g(u_0),
\end{align*}
from which we conclude $f(u_0)=g(u_0)=(f \ic g )(u_0)=1$, contrary to our assumption.
\end{proof}

Applied to our case, the above lemma yields:
\begin{prop}\label{prop:extremalunion}Denoting $\mathcal{B} := \{ u + \Lc \;\big\vert\; \TGV(u) \leq 1\}$ we have
\begin{equation}\label{eq:extremalunion}\ext \mathcal{B} \subset \pm \left(\left\{\,u \;\Big\vert\; \|Du\|_M \leq \frac{1}{\alpha}\,\right\} \cup \left\{\,u \;\Big\vert\; \|D^2 u \|_M \leq \frac{1}{\beta}\,\right\}\right) + \Lc \subset \mathcal{B},\end{equation}
and every element in $\ext \mathcal{B}$ is either of the form $\pm \{S_{x}/\alpha + \Lc\}$ or $\pm \{K_{x}/\beta +\Lc\}$.
\end{prop}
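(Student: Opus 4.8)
The plan is to combine Lemma~\ref{lem:infconvsep} with the known extremal points of $B_{M(0,T)}$. I would first regard $\TGV$ on $\BV(0,T)/\Lc$ as the infimal convolution $f\ic g$ of Lemma~\ref{lem:tgvinfconv}, with $f(v+\Lc):=\alpha\inf_{\ell\in\Lc}\TV(v+\ell)$ and $g(h+\Lc):=\beta\TV(Dh)$, both convex and positively one-homogeneous, so that $\mathcal B=\{\,w\in\BV(0,T)/\Lc\mid(f\ic g)(w)\le1\,\}$. Set $\mathcal C_1:=\{\,u\mid\|Du\|_M\le1/\alpha\,\}+\Lc$ and $\mathcal C_2:=\{\,u\mid\|D^2u\|_M\le1/\beta\,\}+\Lc$; these are symmetric and convex, and testing \eqref{eq:tgvinfdef} with $w\equiv0$, respectively $w=Du$, immediately gives $\mathcal C_1\cup\mathcal C_2\subseteq\mathcal B$, which is the second inclusion of \eqref{eq:extremalunion}. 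For the first inclusion, take $u_0+\Lc\in\ext\mathcal B$ (so $\TGV(u_0)=1$), pick a minimizer $\bar w\in\BV(0,T)$ of $w\mapsto\alpha\|Du_0-w\|_M+\beta\|Dw\|_M$ (which exists by the direct method, the functional being coercive and weak-* lower semicontinuous on $\BV(0,T)$), and form the optimal decomposition $u_0=\bar v+\bar h$ of \eqref{eq:integrateminima}, which by \eqref{eq:tgvinquotient} satisfies $f(\bar v+\Lc)+g(\bar h+\Lc)=1$.

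I would then split into three cases. If $g(\bar h+\Lc)=\beta\TV(D\bar h)=0$, then $D\bar h$ is constant, so $\bar h\in\Lc$; hence $u_0+\Lc=\bar v+\Lc$ and $\alpha\inf_{\ell\in\Lc}\TV(u_0-\ell)=f(\bar v+\Lc)=1$. Since $c\mapsto\TV(u_0-cx)=\|Du_0-c\mathscr L\|_M$ is continuous and coercive on $\R$, this infimum is attained, producing a representative of $u_0+\Lc$ with total variation $1/\alpha$, so $u_0+\Lc\in\mathcal C_1$. Symmetrically, if $f(\bar v+\Lc)=0$ then $\bar v\in\Lc$ and $\|D^2u_0\|_M=\|D^2\bar h\|_M=\TV(D\bar h)=1/\beta$, so $u_0+\Lc\in\mathcal C_2$. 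In the remaining case $f(\bar v+\Lc)\neq0$ and $g(\bar h+\Lc)\neq0$; then, since $u_0+\Lc$ is extremal in $\mathcal B=\{w\mid(f\ic g)(w)\le1\}$, the conclusion of Lemma~\ref{lem:infconvsep} fails, so its hypothesis ``$f(u_0+\Lc)\neq1$ or $g(u_0+\Lc)\neq1$'' must fail as well; hence $g(u_0+\Lc)=\beta\TV(Du_0)=1$, i.e.\ $\|D^2u_0\|_M=1/\beta$ and $u_0+\Lc\in\mathcal C_2$. This proves $\ext\mathcal B\subseteq\mathcal C_1\cup\mathcal C_2$ and completes \eqref{eq:extremalunion}.

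For the sharper statement, note that since $\mathcal C_1$ and $\mathcal C_2$ are convex subsets of $\mathcal B$, any point of $\ext\mathcal B$ lying in $\mathcal C_i$ is automatically extremal in $\mathcal C_i$; hence $\ext\mathcal B\subseteq\ext\mathcal C_1\cup\ext\mathcal C_2$, and it suffices to bound these two sets. I would pass through the identifications of Proposition~\ref{prop:quotientstuff}: under the isomorphism $\BV(0,T)/\Lc\cong M(0,T)/\R\mathscr L$ induced by $\Lambda$, the set $\mathcal C_1$ corresponds to $q\big(\alpha^{-1}B_{M(0,T)}\big)$ with $q\colon M(0,T)\to M(0,T)/\R\mathscr L$ the quotient map, while $\mathcal C_2$ is the image of $\beta^{-1}$ times the $\TV$-unit ball of $\BV(0,T)/\Cc$ under the map $\Phi$ sending $\nu+\Cc$ to the class in $\BV(0,T)/\Lc$ of a primitive of $\nu$. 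Both $q$ and $\Phi$ are weak-* continuous (up to the identifications of Proposition~\ref{prop:quotientstuff} they are, respectively, the adjoint of the inclusion $C_{0,a}(0,T)\hookrightarrow C_0(0,T)$ and the adjoint of a differentiation map), and their domains are weak-* compact and convex. I would then invoke the elementary fact that for a weak-* continuous linear map $L$ and a weak-* compact convex set $K$ one has $\ext L(K)\subseteq L(\ext K)$: the fiber $L^{-1}(y_0)\cap K$ over $y_0\in\ext L(K)$ is a nonempty weak-* compact face of $K$, hence by Krein--Milman contains a point of $\ext K$. Since $\ext B_{M(0,T)}=\{\pm\delta_x\mid x\in(0,T)\}$ (so, via $\Lambda$, the $\TV$-ball of $\BV(0,T)/\Cc$ has extremal points $\pm\{1_{(x,T)}+\Cc\mid x\in(0,T)\}$), and since the primitive of $\delta_x$ is $S_x$ modulo $\Lc$ while the primitive of $1_{(x,T)}$ is $K_x$ modulo $\Lc$ — here one uses that $1_{(0,x)}(x-\,\cdot\,)$ and $1_{(x,T)}(\,\cdot\,-x)$ differ by an affine function, so both branches of the definition of $K_x$ give the same class — we obtain $\ext\mathcal C_1\subseteq\pm\{S_x/\alpha+\Lc\mid x\in(0,T)\}$ and $\ext\mathcal C_2\subseteq\pm\{K_x/\beta+\Lc\mid x\in(0,T)\}$, which is precisely the asserted form.

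I expect the main work to lie in two places. The first is the case analysis: recognizing via Lemma~\ref{lem:infconvsep} that an extremal point of $\mathcal B$ must come from a ``pure $\TV$'' or ``pure $\TV$-of-the-derivative'' regime, and carefully translating the degeneracies $\bar v\in\Lc$ or $\bar h\in\Lc$ across the quotients $\BV(0,T)$, $\BV(0,T)/\Cc$, $\BV(0,T)/\Lc$. The second is the extremal-points-under-a-continuous-image argument, whose only delicate point is checking weak-* continuity along the chain $\BV(0,T)/\Lc\cong M(0,T)/\R\mathscr L\cong C_{0,a}(0,T)^\ast$ so that the face/Krein--Milman reduction is legitimate; the auxiliary facts (existence of an optimal decomposition, attainment of $\inf_{\ell\in\Lc}\TV(\,\cdot\,+\ell)$, and the extremal points of $B_{M(0,T)}$) are standard.
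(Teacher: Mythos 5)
Your argument for the inclusion \eqref{eq:extremalunion} is correct and is essentially the paper's own proof: both pass to the infimal convolution on $\BV(0,T)/\Lc$ from Lemma~\ref{lem:tgvinfconv}, verify convexity and positive one-homogeneity of $u\mapsto\inf_{\ell\in\Lc}\TV(u+\ell)$ via minimal representatives, and then apply Lemma~\ref{lem:infconvsep}, with your three-case split (degenerate decomposition with $\bar v\in\Lc$ or $\bar h\in\Lc$, versus the equality case $f(u_0)=g(u_0)=1$) simply making explicit what the paper compresses into one remark. The one genuine difference is that you also prove the final claim that extremal points must be of the form $\pm S_x/\alpha+\Lc$ or $\pm K_x/\beta+\Lc$ — via the observation that $\ext\mathcal{B}\cap\mathcal{C}_i\subseteq\ext\mathcal{C}_i$ together with the standard fact $\ext L(K)\subseteq L(\ext K)$ for weak-* continuous linear images of weak-* compact convex sets and $\ext B_{M(0,T)}=\{\pm\delta_x\}$ — whereas the paper's proof leaves this step entirely implicit; your supplement is correct (the weak-* continuity of the quotient and antiderivative maps follows from the predual identification in Proposition~\ref{prop:quotientstuff}) and actually closes a small gap in the written proof.
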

\begin{proof}We consider $\TGV$ as an infimal convolution in $\BV(0,T)/\Lc$ as in \eqref{eq:tgvinquotient} of Lemma \ref{lem:tgvinfconv}. First, we notice that for each equivalence class $u + \Lc$ there exist representatives $\bar u$ for which
\begin{equation}\label{eq:goodrepr}\inf_{\ell \in \Lc}\|D(u+\ell)\|_M = \|D\bar u\|_M ,\end{equation}
since this is a convex and coercive functional in $\Lc / \Cc$. This also implies that the functional $u \mapsto \inf_{\ell \in \Lc}\|D(u+\ell)\|_M$ is convex, since if $u+\Lc = (1-\lambda)(u_1+\Lc)+\lambda(u_2 + \Lc)$ and $\bar{u_1}, \bar{u_2}$ are such representatives we have
\begin{align*}\inf_{\ell \in \Lc}\|D(u+\ell)\|_M &\leq \big\|D\big((1-\lambda)\bar{u_1}+\lambda \bar{u_2}\big)\big\|_M \\&\leq (1-\lambda)\|D\bar{u_1}\|_M +\lambda\|D\bar{u_2}\|_M \\&= (1-\lambda)\inf_{\ell \in \Lc}\|D(u_1+\ell)\|_M + \lambda \inf_{\ell \in \Lc}\|D(u_2+\ell)\|_M .\end{align*}
Moreover since $\Lc$ is a linear subspace the functional above is also positively one-homogeneous, so we can apply Lemma \ref{lem:infconvsep}. In view of it, to arrive at \eqref{eq:extremalunion} we just need to check that
\[\left\{ u + \Lc \, \Big\vert \, \alpha \inf_{\ell \in \Lc}\|D(u+\ell)\|_M \leq 1\right \} = \left\{\,u \;\Big\vert\; \|Du\|_M \leq \frac{1}{\alpha}\,\right\} + \Lc,\]
which again follows directly by the existence of minimal representatives \eqref{eq:goodrepr}.

Notice that the assumptions of Lemma \ref{lem:infconvsep} are not a restriction since the case $\|Du\|_M /\alpha = \|D^2 u\|_M /\beta =\TGV(u)= 1$ (for $u$ an optimal representative as above) is also included in \eqref{eq:extremalunion}.
\end{proof}

For the positive direction, the piecewise constant functions in \eqref{eq:maybeextremals} can be handled directly:
\begin{prop}\label{prop:pwcextremals}
Let~$u \in \{S_x / \alpha \}+ \Lc$,~$x \in (0,T)$, be given. Assume that there are~$u_1+\Lc, u_2+\Lc \in \mathcal{B}$ as well as~$\lambda \in (0,1)$ with~$u=(1-\lambda)u_1+ \lambda u_2$. Then we have~$u_1, u_2 \in \{S_x / \alpha \}+ \Lc$.
\end{prop}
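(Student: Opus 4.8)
The plan is to show that for $u = S_x/\alpha + \Lc$, any nontrivial convex decomposition $u = (1-\lambda)u_1 + \lambda u_2$ with $u_1 + \Lc, u_2 + \Lc \in \mathcal{B}$ forces $u_1, u_2$ into the same equivalence class. First I would pick optimal representatives: by the existence of minimal representatives (as in \eqref{eq:goodrepr} of Proposition \ref{prop:extremalunion}) and the infimal convolution formula \eqref{eq:tgvinfconv}, I can choose $\bar u_1, \bar u_2 \in \BV(0,T)$ in the classes $u_1 + \Lc, u_2 + \Lc$ together with decompositions $\bar u_i = v_i + h_i$ realizing $\TGV(u_i) = \alpha \TV(v_i) + \beta \TV(Dh_i) \leq 1$. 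Note $u = S_x/\alpha$ itself has the trivial decomposition $v = S_x/\alpha$, $h = 0$, giving $\TGV(u) = \alpha \TV(S_x/\alpha) = 1$. The key rigidity to exploit is that $D(S_x/\alpha) = \delta_x/\alpha$ is a single Dirac: since $(1-\lambda)D\bar u_1 + \lambda D\bar u_2 = \delta_x/\alpha$ (up to the Lebesgue-measure ambiguity from $\Lc$, which I would normalize away by insisting the representatives have $\int_0^T \bar u_i\, dx$ and mean of $D\bar u_i$ matched appropriately), and each $D\bar u_i = Dv_i + Dh_i$ with $Dh_i$ absolutely continuous, the singular parts must satisfy $(1-\lambda) D^s v_1 + \lambda D^s v_2 = \delta_x/\alpha$ as measures.

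Next I would run a convexity/equality argument on the total mass. We have $1 = \TGV(u) \leq (1-\lambda)\TGV(u_1) + \lambda\TGV(u_2) \leq 1$, so $\TGV(u_1) = \TGV(u_2) = 1$ and all the convexity inequalities are equalities. Writing out
\[
1 = \alpha\TV\big((1-\lambda)v_1 + \lambda v_2\big) + \beta\TV\big(D((1-\lambda)h_1 + \lambda h_2)\big) \leq (1-\lambda)\big(\alpha\TV(v_1)+\beta\TV(Dh_1)\big) + \lambda(\cdots) = 1,
\]
I get equality in the triangle inequality for $\TV$ in both the $v$ and the $h$ slots. For the $v$-part, equality $\TV((1-\lambda)v_1+\lambda v_2) = (1-\lambda)\TV(v_1) + \lambda\TV(v_2)$ together with the optimal decomposition of $u$ having $\TV$-part equal to $1/\alpha$ concentrated entirely in the single Dirac $\delta_x$ means the measures $Dv_1, Dv_2$ must be "aligned" with $\delta_x$: concretely, $|(1-\lambda)Dv_1 + \lambda Dv_2| = (1-\lambda)|Dv_1| + \lambda|Dv_2|$ as measures, which combined with the total variation of the left side being a multiple of $\delta_x$ (from the constraint) forces $Dv_i$ to themselves be multiples of $\delta_x$ plus possibly an absolutely continuous remainder — and then I would argue the absolutely continuous remainder must vanish and $Dh_i$ must vanish, using that any absolutely continuous mass in $Dv_i$ or any nonzero $Dh_i$ would be strictly cheaper to absorb or would break the attained value $\TGV(u_i) = 1$.

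The main obstacle I anticipate is the last step: ruling out a nonzero $h_i$ component and showing $Dv_i$ is a pure Dirac. The cleanest route is probably to invoke Lemma \ref{lem:infconvsep} in the quotient space — if $\TGV(u_i) = 1$ were realized by a genuinely split decomposition with both $\alpha\inf_{\ell}\TV(v_i+\ell) \neq 0$ and $\beta\TV(Dh_i) \neq 0$, then $u_i$ would fail to be extremal, but that does not immediately help since $u_i$ is not assumed extremal. Instead I would argue directly: from the Dirac structure of $D^s((1-\lambda)v_1 + \lambda v_2) = \delta_x/\alpha$, the equality case in the triangle inequality pins the polar decompositions of $D^s v_1$ and $D^s v_2$ to coincide $|D^sv_1|$-a.e. and $|D^sv_2|$-a.e., hence each $D^s v_i$ is a (nonnegative-scalar, given the sign) multiple of $\delta_x$; then $D^a v_i + Dh_i$ must be a multiple of $D(x \mapsto x)$ (i.e. a constant times Lebesgue measure) for $\bar u_i - (\text{multiple of }S_x)$ to lie in $\Lc$, which via the optimality $\TGV(u_i) = \alpha\TV(v_i) + \beta\TV(Dh_i)$ and minimality of the representative forces $Dh_i = 0$ and $D^a v_i$ to be that constant, i.e. $u_i \in \{S_x/\alpha\} + \Lc$. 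I would present the total-variation equality-case analysis (the Radon–Nikodym/polar decomposition alignment) as the technical heart, citing \cite{AmbFusPal00} for the characterization of equality in $\|\mu_1 + \mu_2\|_M = \|\mu_1\|_M + \|\mu_2\|_M$.
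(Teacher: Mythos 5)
Your plan is correct and follows essentially the same route as the paper's proof: split the derivatives into singular and absolutely continuous parts, use the rigidity of the single Dirac $\delta_x$ to pin the singular parts (you via the equality case of the triangle inequality, the paper via extremality of $\delta_x$ in the unit ball of $M(0,T)$ together with the mass bound $\alpha\|D^s u_i\|_M\le 1$), and then use saturation of the value $\TGV(u_i)=1$ to force the remaining parts into $\Lc$. The only caveat is your suggestion that a nonzero $Dh_i$ would by itself ``break'' $\TGV(u_i)=1$ --- it would not (plenty of functions with $\TGV=1$ have a nontrivial second-order part); what actually kills it is the equality in the $h$-slot of your convexity chain combined with the observation that the optimal $h$-part for $S_x/\alpha$ must be affine since $D^s(S_x/\alpha)=\delta_x/\alpha$ already exhausts the whole budget, an ingredient your plan already contains.
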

\begin{proof}
Again we may assume that~$\mathrm{TGV}(u_1)=\mathrm{TGV}(u_2)=1$. For every~$i=1,2$ there exists~$w_i \in \mathrm{BV}(0,T)$ with
\begin{align*}
 \alpha \|Du_i-w_i\|_M + \beta \|Dw_i\|_M=\alpha \|D^s u_i \|_M +  \alpha \|D^a u_i - w_i \|_M + \beta \|Dw_i \|_M = \operatorname{TGV}(u_i)=1.
\end{align*}
In particular, this implies
\begin{align} \label{eq:estforDu1}
\alpha \|D^s u_i\|_M \leq \alpha \|D^s u_i\|_M +  \alpha \|D^a u_i - w_i\|_M+ \beta \|Dw_i\|_M= \operatorname{TGV}(u_i)=1.
\end{align}
By assumption we have that
\begin{align*}
Du=\delta_x/\alpha + c =(1-\lambda)Du_1+\lambda Du_2
\end{align*}
for some~$c\in \R$. Thus, separating into singular and absolutely continuous parts, we get
\begin{align*}
\delta_x /\alpha=(1-\lambda)D^s u_1+\lambda D^s u_2,~c= (1-\lambda)D^a u_1+\lambda D^a u_2.
\end{align*}
By extremality of~$\delta_x$ for the unit ball in~$M(0,T)$ and~$\lambda \in (0,1)$, the first equality is only possible if~$D^s u_1=D^s u_2= \delta_x /\alpha$. Now, since~$\alpha |D^s u_i |=1$,~\eqref{eq:estforDu1} yields
\begin{align*}
\alpha \|D^a u_i - w_i\|_M+ \beta \|Dw_i\|_M=\inf_{w \in \mathrm{BV}(0,T)} \left \lbrack\, \alpha \|D^a u_i - w\|_M+ \beta \|Dw\|_M \,\right \rbrack=0.
\end{align*}
which is only possible if~$D^a u_i \in \Cc$. Combining the previous observations we get that~$Du_i \in \{ \delta_x /\alpha\}+ \Cc$ and thus~$u_i \in \{S_x / \alpha \}+ \Lc$.
\end{proof}

To proceed further, we need to examine in detail the behaviour of $\TGV$ for piecewise affine functions of the form $K_{x_1}$.

\begin{prop}\label{prop:tgvpwlinear}Consider the functions $K_{x_1}$ defined in Theorem \ref{thm:tgvextremals}. Assuming $x_1 \geq T/2$, if
\begin{equation}\label{eq:x1betaalpha}\frac{\beta}{\alpha} \leq T - x_1\end{equation}
we have $\TGV_{\alpha, \beta}(K_{x_1}) = \beta \|D^2 u\|_M= \beta$, and otherwise $\TGV_{\alpha, \beta}(K_{x_1}) = \alpha \|Du\|_M=\alpha(T-x_1)$. For $x_1 < T/2$ analogous statements hold, in which each appearance of $T - x_1$ is replaced by $x_1$. Moreover, as long as $\beta/\alpha \neq \min(x_1, T - x_1)$ the inner minimization problem
\begin{equation}\label{eq:pwlL1TV}\argmin_{w \in \mathrm{BV}(0,T)} \alpha \|DK_{x_1}-w\|_M + \beta \|Dw\|_M\end{equation}
admits a unique minimizer $w_{K_{x_1}}$, where the uniqueness is understood in terms of almost everywhere equality.
\end{prop}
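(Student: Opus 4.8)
The plan is to compute $\TGV_{\alpha,\beta}(K_{x_1})$ directly by exploiting that for the model function $K_{x_1}$ the derivative $DK_{x_1}$ is an explicit simple function: for $x_1 \geq T/2$ we have $K_{x_1} = 1_{(x_1,T)}(x - x_1)$, hence $DK_{x_1} = 1_{(x_1,T)}\,\dd x$, an indicator of an interval of length $T - x_1$. We must evaluate $\inf_{w \in \BV}\alpha\|DK_{x_1} - w\|_M + \beta\|Dw\|_M$. First I would produce two competing upper bounds: testing with $w \equiv 0$ gives $\alpha\|DK_{x_1}\|_M = \alpha(T - x_1)$, while testing with $w = 1_{(x_1,T)}$ gives $\alpha \cdot 0 + \beta\|Dw\|_M = \beta\|\delta_{x_1}\|_M = \beta$ (note the jump at $T$ is on the boundary and does not contribute to $\|Dw\|_M$ on the open interval $(0,T)$; this is exactly where the asymmetry between $S_x$ and $K_x$ and the boundary-distance condition will come from). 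So $\TGV(K_{x_1}) \leq \min\big(\alpha(T-x_1),\,\beta\big)$.

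The substance is the matching lower bound, which I would obtain via the dual/supremum definition \eqref{eq:tgvsupdef}: exhibit an admissible $v \in C_c^\infty(0,T)$ with $\|v\|_C \leq \alpha$, $\|v'\|_C \leq \beta$ achieving (in the limit) the claimed value. When $\beta/\alpha \leq T - x_1$, I would take $v$ with $v' \approx -\beta$ on a small interval just left of $x_1$, $v \equiv -\beta \cdot(\text{something})$ reaching value $-\alpha$ is \emph{not} needed; rather, since $\int_0^T K_{x_1} v'' = -\int_0^T (DK_{x_1}) v' = -\int_{x_1}^T v'$, it suffices to make $v' \approx -\beta$ throughout most of $(x_1,T)$ — but $v$ must vanish at $T$, so $v'$ must swing back near $T$; the constraint $\|v\|_C\le\alpha$ limits how much "swing" room we have, and the condition $\beta/\alpha \le T-x_1$ is precisely what guarantees a slope-$\beta$ ramp fits inside $(x_1,T)$ while keeping $|v|\le\alpha$. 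This yields $\int_0^T K_{x_1}v'' \to \beta$. In the opposite regime $\beta/\alpha > T - x_1$, the ramp does not fit, and the best one can do respects $\|v\|_C \le \alpha$, giving the bound $\alpha(T - x_1)$; here I would instead test the infimal-convolution form with $w = 1_{(x_1,T)}$ truncated, or argue directly that any $w$ with small $\|Dw\|_M$ cannot help. Combining the two bounds in each regime gives the stated value, and also identifies which of $w \equiv 0$ or $w = 1_{(x_1,T)}$ is the minimizer.

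For uniqueness of the minimizer $w_{K_{x_1}}$ of \eqref{eq:pwlL1TV} when $\beta/\alpha \neq \min(x_1, T-x_1)$, I would argue that the problem is $\alpha\|\,1_{(x_1,T)} - w\,\|_{L^1} + \beta\,\TV(w)$ (using that $DK_{x_1}$ is absolutely continuous so only the a.c.\ part of $w$ matters in the first term, and the singular part only increases $\TV$), an $L^1$-$\TV$ denoising problem with a binary datum. For such one-dimensional problems with strictly ordered parameter regimes the minimizer is known to be unique: when $\beta$ is small relative to the interval length the unique solution is the datum $1_{(x_1,T)}$ itself; when $\beta$ is large it is $0$; the excluded equality case $\beta/\alpha = \min(x_1,T-x_1)$ is exactly the degenerate threshold where both are optimal and a whole segment of minimizers appears. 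I would make this precise by a short convexity/strict-complementarity argument, or by invoking the explicit structure of 1D $\TV$-$L^1$ minimizers; the main obstacle I anticipate is being careful about boundary effects at $0$ and $T$ (jumps there are "free" for $\|Dw\|_M$ on the open interval) and about the interplay between the singular part of $w$ and the $L^1$ term, so that the reduction to the scalar denoising problem is genuinely rigorous.
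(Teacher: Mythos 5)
Your two upper bounds are correct and identify the right competitors ($w\equiv 0$ versus $w=1_{(x_1,T)}$, whose jump at $T$ costs nothing on the open interval), so everything hinges on the matching lower bound --- and that is where the proposal breaks down. You propose to obtain it from the predual formulation \eqref{eq:tgvsupdef}. But two integrations by parts give $\int_0^T K_{x_1}v''=v(x_1)$, so under the constraints $\|v\|_C\le\alpha$, $\|v'\|_C\le\beta$ the supremum equals $\min\bigl(\alpha,\beta x_1,\beta(T-x_1)\bigr)$, which is \emph{not} the asserted value $\min\bigl(\beta,\alpha(T-x_1)\bigr)$: with $T=10$, $x_1=5$, $\alpha=1$, $\beta=3$ one has $\beta/\alpha\le T-x_1$ and the proposition claims $\TGV(K_{x_1})=\beta=3$, yet every admissible $v$ satisfies $v(x_1)\le\alpha=1$. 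So your construction cannot yield $\int_0^T K_{x_1}v''\to\beta$; a slope-$\beta$ ramp filling $(x_1,T)$ gives $v(x_1)=\beta(T-x_1)$, capped at $\alpha$. The root cause is that \eqref{eq:tgvsupdef} as printed is not dual to \eqref{eq:tgvinfdef} (the roles of $\alpha$ and $\beta$ are exchanged; compare $\TGV(S_{x_0})=\alpha$ from the infimal convolution against the value $\beta$ the supremum would give). With the consistent constraints $\|v\|_C\le\beta$, $\|v'\|_C\le\alpha$ --- the ones appearing, via the primitives $\bar p,\bar P$, in Lemma \ref{lem:normboundsonpandP} --- your duality argument does work and gives $\sup v(x_1)=\min\bigl(\beta,\alpha x_1,\alpha(T-x_1)\bigr)$ in one line, arguably more cleanly than the paper; but you would first have to justify that pairing, and your own geometric bookkeeping (slope $\beta$, height capped by $\alpha$) puts the constants in the wrong places for the threshold $\beta/\alpha\le T-x_1$ to emerge. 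You also leave the lower bound in the regime $\beta/\alpha>T-x_1$ essentially unaddressed: testing candidates only produces upper bounds, and ``any $w$ with small $\|Dw\|_M$ cannot help'' is the entire content of that case.

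For contrast, the paper stays on the primal side throughout: after truncating so that $w\in[0,1]$ a.e.\ and using $\TV(w)\ge\sup w-\inf w$, it bounds the energy below by $(\alpha x_1-\beta)\inf w+(\beta-\alpha(T-x_1))\sup w+\alpha(T-x_1)$ as in \eqref{eq:energybound}, minimizes this affine expression over $(\inf w,\sup w)\in[0,1]^2$ in four cases, and exhibits a $w$ attaining equality in each. This also delivers the uniqueness statement for free: when neither coefficient vanishes the optimal pair $(\inf w,\sup w)$ is a unique vertex and equality in the chain forces $w$ to be the corresponding constant on each side of $x_1$, while a vanishing coefficient ($\beta/\alpha=\min(x_1,T-x_1)$) is exactly what produces a one-parameter family of minimizers. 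Your uniqueness paragraph, by contrast, reduces the problem correctly to a binary $L^1$--$\TV$ model but then only appeals to ``known'' structure plus an unspecified strict-complementarity step; that could be made rigorous, but as written it is a plan rather than a proof.
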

\begin{proof}By definition, in case $x_1 \in [T/2, T)$ we have $K_{x_1}(x) = (x-x_1)^+$ and if $x_1 \in (0,T)$ instead $K_{x_1}(x) = (x-x_1)^+ - x + x_1$. Since \eqref{eq:pwlL1TV} is invariant by sign changes and constant shifts in $DK_{x_1}$, we can consider without loss of generality just the function $(\cdot-x_1)^+$. We then notice that since $D(\cdot-x_1)^+(x) \in \{0,1\}$, as in \cite[Prop.~4.5]{DuvAujGou09} by comparing with truncations one sees that any minimizer of \eqref{eq:pwlL1TV} takes values in the interval $[0,1]$ almost everywhere. Moreover, by \cite[Thm.~3.27]{AmbFusPal00} we can compute one-dimensional $\TV$ through the essential (pointwise) variation
\begin{equation}\label{eq:pointwisevariation}\TV(w)=\inf_{z = w ~a.e.} \sup \left\{ \sum_{i = 1}^{n-1} |z(t_{i+1})-z(t_i)| \,\middle\vert\, n \geq 2,\ 0 < t_1 < \ldots < t_n < T  \right\},\end{equation}
and the infimum is achieved for a particular representative $z_w$, which will be assumed in the rest of the proof (that is, from here on $w=z_w$). In particular \eqref{eq:pointwisevariation} implies that
\begin{equation}\label{eq:tvinfsup}\TV(w) \geq \sup_{x \in (0,T)} w(x) - \inf_{x \in (0,T)} w(x) = \sup w - \inf w.\end{equation}
To see this, assume $w$ is not constant (since otherwise there is nothing to prove) and consider sequences $\{x_k\}_k$ and $\{y_k\}_k$ for which $w(x_k) \to \inf w$ and $w(y_k) \to \sup w$. Since $w$ is not constant, for all $k$ large enough we must have $\min(x_k, y_k) < \max(x_k, y_k)$, which allows to use these points for partitions in \eqref{eq:pointwisevariation}. With \eqref{eq:tvinfsup} we have for \eqref{eq:pwlL1TV} the lower energy bound
\begin{equation}\label{eq:energybound}\begin{aligned}\alpha \|DK-w\|_M + \beta \|Dw\|_M &\geq \alpha x_1 \inf w + \alpha (T-x_1)(1-\sup w) + \beta (\sup w - \inf w)
\\&=(\alpha x_1 - \beta) \inf w + (\beta - \alpha (T-x_1))\sup w + \alpha(T-x_1),
\end{aligned}\end{equation}
where we note that the last term does not depend on $w$. Our strategy is to first minimize the right hand side with respect to $\inf w$ and $\sup w$ and then produce functions with these extrema for which \eqref{eq:energybound} holds with equality, which must then be minimizers. To accomplish this for every $\alpha$, $\beta$ and $x_1$ we distinguish four cases:
\begin{itemize}
\item If $x_1 \geq \beta/\alpha$ and $(T-x_1) \geq \beta/\alpha$, then the right hand side of \eqref{eq:energybound} is clearly lowest if $\inf w = 0$ and $\sup w = 1$, in which case it reaches the value $\beta$. Moreover, when $w = D(\cdot-x_1)^+ = S_{x_1}$ we have equality in \eqref{eq:energybound}, so it is a minimizer with cost $\beta$.

\item For $x_1 < \beta/\alpha$ and $(T-x_1) < \beta/\alpha$, the first term compels us to maximize $\inf w$ while the second term enforces minimizing $\sup w$. In case $x_1 - \beta/\alpha > (T-x_1)- \beta/\alpha$ (that is, $x_1 > T/2$) the second term is stronger, so it is advantageous to choose $\sup w = 0$, hence $w \equiv 0$ minimizes. In case $x_1 \leq T/2$ we can use $w \equiv 1$ instead. Moreover, in both cases the bound \eqref{eq:energybound} is attained with the value $\alpha$.

\item The two remaining cases $x_1 < \beta/\alpha \leq (T-x_1)$ and $x_1 \geq \beta/\alpha > (T-x_1)$ can be handled as simplifications of the previous one, since in this case there is no competition between the two terms and we can just choose $w \equiv 1$ or $w \equiv 0$ respectively. Again, there is equality in \eqref{eq:energybound} with value $\alpha$.
\end{itemize}
Finally, we notice that the only possibility that does not lead to a unique minimizer is when one of the terms of the right hand side of \eqref{eq:energybound} vanishes. This means that either $x_1 = \beta/\alpha$ in which case $w = c\,D(\cdot-x_1)^+$ is a minimizer for all $c \in [0,1]$, or that $(T-x_1) = \beta/\alpha$ with $w = c+(1-c) D(\cdot-x_1)^+$ minimizing for all $c \in [0,1]$.
\end{proof}

With this, we can prove that all of the piecewise affine functions in \eqref{eq:maybeextremals} are indeed extremal:
\begin{prop}\label{prop:pwlextremals}
Let~$u \in \{K_x / \beta \}+ \Lc$,~$\operatorname{dist}(x, \partial(0,T))> \beta/\alpha$, be given. Assume that there are~$u_1, u_2 \in B$ as well as~$\lambda \in (0,1)$ with~$u=(1-\lambda)u_1+ \lambda u_2$. Then we have~$u_1, u_2 \in \{K_x / \beta \}+ \Lc$.
\end{prop}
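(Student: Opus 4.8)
The plan is to follow the template of Proposition~\ref{prop:pwcextremals}, but now passing through the \emph{second-order} component of the infimal convolution representation of Lemma~\ref{lem:tgvinfconv} and crucially invoking the uniqueness of the inner minimizer from Proposition~\ref{prop:tgvpwlinear}. Since $\operatorname{dist}(x,\partial(0,T)) > \beta/\alpha$, Proposition~\ref{prop:tgvpwlinear} yields $\TGV(K_x) = \beta\,\|D^2 K_x\|_M = \beta$, hence $\TGV(u) = \TGV(K_x/\beta) = 1$; as $\TGV$ is a norm on $\BV(0,T)/\Lc$ and $\TGV(u_i)\le 1$, the standard convexity argument ($1=\TGV(u)\le(1-\lambda)\TGV(u_1)+\lambda\TGV(u_2)\le 1$) forces $\TGV(u_1)=\TGV(u_2)=1$. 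For each $i$ I would choose, as in Proposition~\ref{prop:pwcextremals}, an optimal $w_i\in\BV(0,T)$ with $\alpha\|Du_i-w_i\|_M+\beta\|Dw_i\|_M=1$, and set $h_i=\int_0^{\boldsymbol{\cdot}}w_i\dd t$, $v_i=u_i-h_i$ as in~\eqref{eq:integrateminima}; then $u_i=v_i+h_i$, by Lemma~\ref{lem:tgvinfconv} one has $\alpha\TV(v_i)+\beta\TV(Dh_i)=\TGV(u_i)=1$, and in particular $\|D^2h_i\|_M=\TV(Dh_i)\le 1/\beta$.

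Next, form $\hat v=(1-\lambda)v_1+\lambda v_2$ and $\hat h=(1-\lambda)h_1+\lambda h_2$; then $u=\hat v+\hat h$, and convexity of $\TV$ gives $\alpha\inf_{\ell\in\Lc}\TV(\hat v+\ell)+\beta\TV(D\hat h)\le\alpha\TV(\hat v)+\beta\TV(D\hat h)\le 1=\TGV(u)$, so $(\hat v+\Lc,\hat h+\Lc)$ attains the infimum in~\eqref{eq:tgvinquotient} for $u$. The heart of the argument is to show that the $h$-component of \emph{any} optimal decomposition of $u$ in~\eqref{eq:tgvinquotient} equals $K_x/\beta$ modulo $\Lc$. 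Given such a decomposition, I would first pass to representatives realizing the inner infimum over $\Lc$ (which exists, as noted in the proof of Proposition~\ref{prop:extremalunion}), obtaining an optimal pair $(\tilde v,\tilde h)$ for~\eqref{eq:tgvinfconv} with the same classes; then $w:=D\tilde h$ minimizes the inner problem $\argmin_{w\in\BV(0,T)}\alpha\|Du-w\|_M+\beta\|Dw\|_M$, since $\alpha\|Du-w\|_M+\beta\|Dw\|_M=\alpha\|D\tilde v\|_M+\beta\|D^2\tilde h\|_M=\alpha\TV(\tilde v)+\beta\TV(D\tilde h)=\TGV(u)$. This inner problem reduces to~\eqref{eq:pwlL1TV} for $K_x$ after absorbing the additive constant in $Du$ coming from the chosen representative of $u+\Lc$ and the scaling by $1/\beta$; and Proposition~\ref{prop:tgvpwlinear} applies because $\operatorname{dist}(x,\partial(0,T))>\beta/\alpha$ rules out the exceptional value $\beta/\alpha=\min(x,T-x)$. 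Hence $w=D\tilde h$ is uniquely determined a.e.; comparing with the explicit optimal decomposition $(0,K_x/\beta)+\Lc$ (which has cost $\alpha\cdot 0+\beta\,\|D^2(K_x/\beta)\|_M=1$) forces $\tilde h=K_x/\beta+\ell$ for some $\ell\in\Lc$, so the class $h+\Lc$ equals $K_x/\beta+\Lc$.

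Applying this to $\hat h$ gives $D^2\hat h=D^2(K_x/\beta)=\delta_x/\beta$, i.e. $(1-\lambda)D^2h_1+\lambda D^2h_2=\delta_x/\beta$ with $\|D^2h_i\|_M\le 1/\beta$. Exactly as in the proof of Proposition~\ref{prop:pwcextremals}, the triangle inequality first forces $\|D^2h_i\|_M=1/\beta$ and then, by extremality of $\delta_x$ in the unit ball of $M(0,T)=(C_0(0,T))^\ast$ and $\lambda\in(0,1)$, gives $D^2h_1=D^2h_2=\delta_x/\beta$. Consequently $\beta\TV(Dh_i)=1$, so the identity $\alpha\TV(v_i)+\beta\TV(Dh_i)=1$ forces $\TV(v_i)=0$, i.e. $v_i\in\Cc$; therefore $D^2u_i=D^2v_i+D^2h_i=\delta_x/\beta$, which means $u_i-K_x/\beta\in\Lc$ and hence $u_i\in\{K_x/\beta\}+\Lc$, as claimed.

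The step I expect to be the main obstacle is the uniqueness argument of the second paragraph: one must verify carefully that an optimal pair for the infimal convolution~\eqref{eq:tgvinfconv} of $u$ corresponds, via $w=Dh$, to a minimizer of the corresponding inner problem, and that the reduction of that inner problem to~\eqref{eq:pwlL1TV} for $K_x$ is legitimate — in particular tracking the additive constant in $Du$ inherited from the representative of the class $u+\Lc$, the scaling by $1/\beta$, and the normalization $(\,\cdot-x_1)^+$ versus $K_{x_1}$ used in Proposition~\ref{prop:tgvpwlinear}. Once this is in place, the remaining steps are the routine Dirac-mass extremality computation already carried out for the piecewise constant case.
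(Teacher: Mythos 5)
Your proposal is correct and follows essentially the same route as the paper: both hinge on the uniqueness of the inner minimizer from Proposition~\ref{prop:tgvpwlinear}, applied to the convex combination of the optimal $w_i$ (equivalently, your $D\hat h$) to force it to equal $Du$, followed by the extremality of $\delta_x$ in the unit ball of $M(0,T)$ and the vanishing of the remaining first-order term. The only difference is cosmetic — you phrase the argument through the primitives $h_i=\int_0^{\boldsymbol{\cdot}} w_i\dd t$ and the infimal-convolution decomposition of Lemma~\ref{lem:tgvinfconv}, while the paper works directly with the $w_i$ — and your careful handling of the additive constant and scaling in the reduction to \eqref{eq:pwlL1TV} is exactly the bookkeeping the paper leaves implicit.
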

\begin{proof}
We may assume without loss of generality that~$\mathrm{TGV}(u_1)=\mathrm{TGV}(u_2)=1$. For~$i=1,2$ there exists~$w_i \in \mathrm{BV}(0,T)$ with
\begin{align*}
\beta \|Dw_i\|_M \leq \alpha \|Du_i-w_i\|_M+ \beta \|Dw_i\|_M= \operatorname{TGV}(u_i)=1.
\end{align*}
Moreover, by Proposition~\ref{prop:tgvpwlinear} the problem
\begin{align*}
\min_{w \in \mathrm{BV}(0,T)} \left \lbrack \,\alpha \|Du-w\|_M+\beta \|Dw\|_M \,\right \rbrack
\end{align*}
has a unique minimizer given by~$w=Du$. Now, by assumption, we have~$Du=(1-\lambda)Du_1+ \lambda Du_2$. Consequently, the function~$w_\lambda=(1-\lambda) w_1+\lambda w_2$ satisfies
\begin{align*}
1=\operatorname{TGV}(u) &\leq\alpha \|Du-w_\lambda\|_M+ \beta \|Dw_\lambda\|_M  \\
& \leq (1-\lambda) \operatorname{TGV}(u_1) + \lambda \operatorname{TGV}(u_2)=1. 
\end{align*}
Thus we conclude~$w_\lambda=Du$ and
\begin{align*}
\delta_x /\beta= (1-\lambda) D w_1 + \lambda D w_2
\end{align*}
Again, by extremality of~$\delta_x $ for the unit ball in~$M(0,T)$, we get~$w_i \in \{S_x / \beta \}+ \Cc$. Now, since
\begin{align*}
1= \beta \|Dw_i\|_M \leq \operatorname{TGV}(u_i)=1
\end{align*}
we arrive at~$w_i=Du_i$ which finally yields~$u_i \in  \{K_x / \beta \}+ \Lc $.
\end{proof}

On the other hand the equality case can be resolved manually:
\begin{remark}\label{rem:equalcasenot} If $\operatorname{dist}(x_1, \partial(0,T)) = \beta/\alpha$ then $K_{x_1}/\beta + \Lc$ is not in $\ext B$. To see this, assuming without loss of generality that $x_1 > T/2$, we can write $K_{x_1}/\beta$ as the convex combination
\[\frac{1}{\beta}K_{x_1} = \frac12 u_1 + \frac12 u_2 \,\text{ with }\, u_1 = \frac{2}{\beta}K_{x_1} - \frac{2}{\beta}K_{(T+x_1)/2}\text{ and }\, u_2 = \frac{2}{\beta}K_{(T+x_1)/2},\]
and $u_1 + \Lc, u_2 + \Lc$ belong to $\mathcal{B}$ since arguing as in Proposition \ref{prop:tgvpwlinear} we have that the inner minimizers must be $w_i\equiv 0$ and
\[\TGV(u_1) = \alpha \|Du_1\|_M= \TGV(u_2)=\alpha \|Du_2\|_M=\frac{\alpha}{\beta}(T-x_1)=1.\]
\end{remark}

Finally, putting the above facts together we get:
\begin{proof}[Proof of Theorem \ref{thm:tgvextremals}]
By Proposition \ref{prop:extremalunion}, elements in $\ext \mathcal{B}$ can only be equivalence classes corresponding to either piecewise constant functions with one jump in $\pm\{S_x / \alpha \} + \Lc$ or piecewise affine functions with one kink in $\pm\{K_x / \beta \} + \Lc$. Proposition \ref{prop:pwcextremals} implies that all the piecewise constant candidates are indeed extremal. For the piecewise affine candidates, we see by Proposition \ref{prop:pwlextremals} that those with $\operatorname{dist}(x, \partial(0,T))> \beta/\alpha$ are also extremal, and by Proposition \ref{prop:tgvpwlinear} that those with $\operatorname{dist}(x, \partial(0,T)) < \beta/\alpha$ are not. The equality case is resolved by Remark \ref{rem:equalcasenot}.
\end{proof}

\section{Optimization problems with~\texorpdfstring{$\TGV$}{TGV}-regularization} \label{sec:consequences}
In the following, we discuss the consequences of Theorem~\ref{thm:tgvextremals} on minimization problems of the form
\begin{align} \label{def:minprob}
\min_{u \in \BV(0,T)} J(u), \quad J(u)=f(u) + \TGV_{\alpha, \beta}(u)\tag{$\mathcal{P}$}.
\end{align}
Here,~$f \colon L^2(0,T) \to \mathbb{R}$ denotes a smooth but not necessarily convex fidelity term and the~$\TGV$-functional plays the role of a regularizer. From a practical point of view, problems of this form are interesting since the inclusion of the~$\TGV$-functional is expected to favor piecewise affine minimizers. This section aims at formalizing this observation. In particular, we present sufficient conditions for the existence of a solution~$\bar{u}$ which is~\textit{sparse} in the sense that, up to a shift in~$\mathcal{L}$, it can be written as a finite conic combination of piecewise constant and continuous piecewise affine functions, i.e., there are sets~$\{\bar{x}^S_j\}^{N_S}_{j=1},~\{\bar{x}^K_j\}^{N_K}_{j=1}$ in~$(0,T)$ such that
\begin{align}
\label{def:sparseuintro}
\bar{u}= (1/\alpha)\sum^{N_S}_{j=1}\bar{\sigma}^S_j \bar{\lambda}^S_j S_{\bar{x}^S_j}+ (1/\beta)\sum^{N_K}_{i=j} \bar{\sigma}^K_j \bar{\lambda}^K_j K_{\bar{x}^K_j}+ \bar{a}x+\bar{b}.
\end{align}
for some~$\bar{\sigma}^S_j,\bar{\sigma}^K_j \in \{-1,1\}$,~$\bar{\lambda}^S_j,\bar{\lambda}^K_j \geq 0,\bar{a}, \bar{b} \in \R$. Two characteristic cases are discussed. First, we show that the kinks and jumps of a solution~$\bar{u}$ correspond to maximizers and minimizers of certain dual variables. This can be derived from necessary first-order optimality conditions for Problem~\eqref{def:minprob} which are provided in Section~\ref{subsec:subdiff}. Consequently, if these only admit~\textit{finitely many} global extrema, the function~$\bar{u}$ is of the form~\eqref{def:sparseuintro}. Second, we consider structured objective functionals of the form~$f(u)=F(Hu)$ where~$F$ is convex and~$H$ is a linear operator with finite range. In this case, the existence of a sparse solution follows from~\textit{convex representer theorems} for abstract minimization problems, see~\cite{BreCar20,Boy}. Loosely speaking, applied to the present problem, these state the existence of a minimizer~$\bar{u}$ which is given by a finite conic combination of functions~$\bar{u}_j$ with~$\bar{u}_j+ \mathcal{L} \in \ext  \mathcal{B}$. Thus, in virtue of Theorem~\ref{thm:tgvextremals}, it will be of the form~\eqref{def:sparseuintro}. 
\subsection{Linear minimization over~\texorpdfstring{$\TGV$}{TGV}-balls} \label{subsec:linearmin}
For abbreviation set
\begin{align*}
\mathcal{S} \coloneqq \left\{\, \pm S_x/\alpha\;|\;x \in (0,T)\,\right\},~\mathcal{K} \coloneqq \left\{\,\pm K_x/\beta\;|\;x \in (\beta/\alpha,T-\beta/\alpha)\,\right\}
\end{align*}
throughout the following sections. Moreover, with abuse of notation, define
\begin{align*}
\operatorname{Ext} B\coloneqq \mathcal{S} \cup \mathcal{K} \subset \BV(0,T), \text{ for } B:= \big\{ u \in \BV(0,T) \;\vert\; \TGV(u) \leq 1 \big\}.
\end{align*}
Note that for the unit ball~$\mathcal{B}$ in~$\BV(0,T)/ \mathcal{L}$, see~\eqref{eq:unitball}, and its set of extremal points, Theorem~\ref{thm:tgvextremals}, we have
\begin{align*}
\mathcal{B}=\left\{\,u + \mathcal{L}\;|\;u \in B\,\right\},~\operatorname{Ext} \mathcal{B}=\left\{\,u + \mathcal{L}\;|\;u \in \operatorname{Ext} B\,\right\}.
\end{align*}
Now, given~$\varphi \in L^2(0,T)$, consider the infimizing problem
\begin{align} \label{def:linearprobgen}
\inf_{v \in B} (\varphi,v)_{L^2}.
\end{align}
Problems of this form will play a vital role in the following sections. We point out that the set~$B$ is unbounded in~$\BV(0,T)$, i.e.,~\eqref{def:linearprobgen} does not necessarily admit a minimizer. In the following lemma we show that it is well-posed if~$\varphi$ is orthogonal to the kernel of the~$\TGV$-functional. Moreover, in order to find a solution, it suffices to minimize over the smaller set~$\operatorname{Ext}B$.
\begin{theorem}\label{thm:existlinoverext}
Assume that~$\varphi \in L^2(0,T)$ satisfies~$(\varphi,\ell)_{L^2}=0$ for all~$\ell \in\mathcal{L}$. Then~\eqref{def:linearprobgen} admits a minimizer in~$B$ and there holds
\begin{align*}
\min_{v \in B} (\varphi,v)_{L^2}=\min_{v \in \ext B} (\varphi,v)_{L^2}.\end{align*}
\end{theorem}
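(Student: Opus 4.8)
The plan is to establish well-posedness of \eqref{def:linearprobgen} first, and then use a decomposition-and-convexity argument to reduce the infimum to $\ext B$. For the first part, I would use Proposition \ref{prop:quotientstuff}: since $(\varphi, \ell)_{L^2} = 0$ for all $\ell \in \Lc$, the linear functional $v \mapsto (\varphi, v)_{L^2}$ descends to a well-defined continuous functional on the quotient $\BV(0,T)/\Lc$. Indeed, by the equivalence of norms and the identification of the separable predual $C_{0,a}(0,T)$, the functional $u + \Lc \mapsto (\varphi, u)_{L^2}$ is weak-* continuous on $\BV(0,T)/\Lc$ (one should check that $\varphi$, viewed through the predual pairing, acts continuously; this uses that $L^2(0,T) \hookrightarrow L^1(0,T)$ and the structure of the pairing in \eqref{eq:predual}, or more simply that the sublevel sets of $\TGV$ are $L^1$-bounded modulo $\Lc$ by \eqref{eq:tgvlowerbound}, and $(\varphi, \cdot)_{L^2}$ is $L^1$-continuous on such sets after fixing good representatives). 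Since $\mathcal{B}$ is weak-* compact, the infimum over $\mathcal{B}$ is attained; picking for the minimizing class $\bar u + \Lc$ any representative $\bar u \in B$ and noting $(\varphi, \bar u)_{L^2} = (\varphi, \bar u + \ell)_{L^2}$ for all $\ell$, we obtain a minimizer in $B$ and $\min_{v \in B}(\varphi, v)_{L^2} = \min_{u + \Lc \in \mathcal{B}}(\varphi, u + \Lc)_{L^2}$.

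For the reduction to extremal points, the clean route is Bauer's maximum principle: a linear continuous functional on a compact convex set attains its extrema at extremal points. Applied to $-(\varphi, \cdot)_{L^2}$ on the weak-* compact convex set $\mathcal{B}$, this gives a minimizer of $(\varphi, \cdot)_{L^2}$ lying in $\ext \mathcal{B}$. By Theorem \ref{thm:tgvextremals}, $\ext \mathcal{B} = \{\, u + \Lc \mid u \in \mathcal{S} \cup \mathcal{K}\,\} = \{\, u + \Lc \mid u \in \ext B\,\}$, so choosing the corresponding representative in $\ext B$ yields $\min_{v \in B}(\varphi, v)_{L^2} = \min_{v \in \ext B}(\varphi, v)_{L^2}$. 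One subtlety: Bauer's principle as usually stated requires upper semicontinuity of the functional being maximized, which here is automatic by weak-* continuity; and it requires the set to be compact in a locally convex topology, which the weak-* topology on the dual of the separable space $C_{0,a}(0,T)$ provides. If one prefers to avoid invoking Bauer directly, an alternative is to combine Krein--Milman with a limiting argument: by Krein--Milman, $\mathcal{B} = \overline{\conv}^{\,w*}(\ext \mathcal{B})$, so for the optimal value $m = \min_{\mathcal{B}}(\varphi, \cdot)$ there is a net (or, by separability, a sequence) of finite convex combinations of extremal points converging weak-* to a minimizer; since $(\varphi, \cdot)_{L^2}$ is affine and weak-* continuous, each such combination has value $\geq m$ and converges to $m$, forcing at least one extremal point appearing in the combinations to have value arbitrarily close to $m$, and then weak-* compactness of $\ext \mathcal{B}$ (or rather of the closed candidate set $\overline{\mathcal{S} \cup \mathcal{K}} + \Lc \subset \mathcal{B}$, noting $\ext B$ need not itself be closed because of the strict inequality in $\mathcal{K}$) combined with continuity gives an exact minimizer. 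The Bauer argument is cleaner and I would present that.

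The main obstacle, and the step requiring genuine care, is verifying weak-* continuity of $v \mapsto (\varphi, v)_{L^2}$ on $\mathcal{B}$ — more precisely, that this functional is well-defined and continuous as a functional on $\BV(0,T)/\Lc$ equipped with its weak-* topology. The pairing in \eqref{eq:predual} is between $Du$ and elements of $C_{0,a}(0,T)$, whereas $(\varphi, \cdot)_{L^2}$ pairs $\varphi$ with $u$ itself, so one must translate: given $\varphi \perp \Lc$ in $L^2$, one needs to exhibit a predual element representing $u + \Lc \mapsto (\varphi, u)_{L^2}$, i.e., find $p \in C_{0,a}(0,T)$ — or at least argue the functional is weak-* continuous without exhibiting $p$ explicitly by using that $L^2$-functions act continuously on $L^1$-bounded sets and that weak-* convergence in $\BV(0,T)/\Lc$ implies, after choice of minimal-norm representatives, $L^1$-convergence on a subsequence by the compact embedding $\BV(0,T) \hookrightarrow\hookrightarrow L^1(0,T)$. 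The condition $\varphi \perp \Lc$ is exactly what makes the functional descend to the quotient, since $\dim \Lc = 2$ and $\ell \mapsto (\varphi, \ell)_{L^2}$ must vanish. Once this continuity is in hand, both the existence of a minimizer and the Bauer/Krein--Milman reduction are immediate, and the identification $\ext \mathcal{B} \leftrightarrow \ext B$ is purely bookkeeping via Theorem \ref{thm:tgvextremals}.
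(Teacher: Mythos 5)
Your proposal is correct and follows essentially the same route as the paper: descend to the quotient $\BV(0,T)/\Lc$ using $\varphi\perp\Lc$, obtain a minimizer from weak-* compactness of $\mathcal{B}$ (Proposition~\ref{prop:quotientstuff}), and reduce to extremal points via the fact that a continuous linear functional on a compact convex set attains its extrema on $\ext\mathcal{B}$ — you invoke Bauer's maximum principle where the paper passes to the one-dimensional image $\Phi(\mathcal{B})\subset\R$ and cites \cite[Lemma~3.2]{BreCar20}, and these are interchangeable. Your additional care about weak-* (rather than merely norm) continuity of $v+\Lc\mapsto(\varphi,v)_{L^2}$, settled via the compact embedding of $\BV(0,T)$ on representatives of bounded norm, addresses a step the paper's proof passes over silently, and your remark that the direct Bauer argument sidesteps the non-closedness of $\ext B$ is well taken.
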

\begin{proof}
Let~$v \in B $ be arbitrary but fixed. Using that $T < +\infty$ and the Sobolev embedding \cite[Thm.~3.47]{AmbFusPal00} for $\BV(0,T)$ there holds 
\begin{align*}
(\varphi,v)_{L^2}=(\varphi,v+\ell)_{L^2} \leq \|\varphi\|_{L^2} \|v+\ell\|_{L^2}\leq \|\varphi\|_{L^2} \|v+\ell\|_{L^\infty}\leq c\,\|v+\ell\|_{\BV} \quad \text{for all}~ \ell \in\mathcal{L}
\end{align*}
for some~$c>0$ independent of~$v$ and~$\ell$. In particular, this implies\begin{align*}
(\varphi,v)_{L^2} \leq c \inf_{\ell \in\mathcal{L}}\|v+\ell\|_{\BV},
\end{align*}
i.e.,~$\varphi$ induces a linear and continuous functional~$\Phi \colon \BV(0,T)/\Lc \to \mathbb{R} $ with~
\begin{align*}
\Phi(u+\mathcal{L})=(\varphi,u)_{L^2} \quad \text{for all}~u+\mathcal{L} \in \BV(0,T)/\Lc.
\end{align*}
Now by Proposition \ref{prop:quotientstuff} and the Banach-Alaoglu theorem, the set~$\mathcal{B}$ is weak* compact in~$\BV(0,T)/\Lc$. Hence there exists~$\bar{v} \in B$ with
\begin{align} \label{eq:minforlinquotient}
\Phi(\bar{v}+\mathcal{L})= \min_{v+\mathcal{L}\in \mathcal{B}} \Phi(v+\mathcal{L}).
\end{align}
Now we claim that
\begin{align} \label{eq:linearmininext}
\min_{v+\mathcal{L}\in \mathcal{B}} \Phi(v+\mathcal{L})=\min_{v+\mathcal{L}\in \operatorname{Ext}(\mathcal{B})} \Phi(v+\mathcal{L}).
\end{align} 
In order to prove this, consider the convex and compact image set
\begin{align*}
\Phi(\mathcal{B}) \coloneqq \left\{\,\Phi(v+\mathcal{L})\;|\;v \in B\,\right\} \subset \R,
\end{align*}
for which it is readily verified that
\begin{align*}
\operatorname{Ext} \Phi(\mathcal{B})= \left\{\min_{v+\mathcal{L}\in \mathcal{B}} \Phi(v+\mathcal{L}), \max_{v + \mathcal{L}\in \mathcal{B}} \Phi(v+\mathcal{L})\right\}.
\end{align*}
By invoking~\cite[Lemma 3.2]{BreCar20} we finally get
\begin{align*}
\min_{v+\mathcal{L}\in \mathcal{B}} \Phi(v+\mathcal{L}) \in \left\{\min_{v+\mathcal{L}\in \ext \mathcal{B}} \Phi(v+\mathcal{L}), \max_{v+\mathcal{L}\in \ext \mathcal{B}} \Phi(v+\mathcal{L})\right\}.
\end{align*}
which proves~\eqref{eq:linearmininext}. Now, let~$\bar{v} \in \ext B$ be such that~\eqref{eq:minforlinquotient} holds. Then we have
\begin{align*}
(\varphi,\bar{v})_{L^2}= \Phi(\bar{v}+\mathcal{L}) \leq \Phi(v+\mathcal{L})=(\varphi,v)_{L^2} \quad \text{for all}~ v \in B.
\end{align*}
Together with~$\ext B \subset B$ the proof is finished.
\end{proof}

\subsection{Necessary first-order optimality conditions} \label{subsec:subdiff}
The aim of this section is to prove first-order necessary optimality conditions for Problem~\eqref{def:minprob}. Later, these will be used to infer structural properties of its minimizers. For this purpose, start by defining the~$L^2$-subdifferential of the~$\TGV$-functional at $\bar{u} \in \BV(0,T)$ as 
\begin{align} \label{def:L2sub}
\partial_{2} \TGV(\bar{u}) \coloneqq \left \{\,\varphi \in L^2(0,T)\;|\;(\varphi,u-\bar{u})_{L^2}+\TGV(\bar{u})\leq \TGV(u) \quad \text{for all}~ u \in \BV(0,T)\;\,\right\}.
\end{align}
Note that~$\partial_{2} \TGV(\bar{u})$ can be empty. A full characterization of the subdifferential is given as follows:
\begin{theorem}
\label{thm:subdiff}
Let~$\bar{u} \in \BV(0,T) $ be given and let~$\bar{w} \in \BV(0,T)$ be such that
\begin{align*}
\TGV(\bar{u})= \alpha \|D\bar u- \bar w\|_{M} + \beta \|D \bar w\|_M.
\end{align*}
Moreover, for~$\bar{\varphi} \in L^2(0,T)$ define
\begin{align} \label{def:pandP}
\bar{p}=- \int^{\boldsymbol{\cdot}}_0 \bar{\varphi}(x)~\mathrm{d}x,~\bar{P}=- \int^{\boldsymbol{\cdot}}_0 \bar{p}(x)~\mathrm{d}x.
\end{align}
Then we have~$\bar{\varphi} \in \partial_2 \TGV(\bar{u})$ if and only if
\begin{itemize}
\item There holds
\begin{equation} \label{eq:boundonpandP}
\bar{p} \in H^1_0(0,T),~\bar{P} \in H^2(0,T) \cap H^1_0(0,T),~\|\bar{p}\|_C \leq \alpha,~\|\bar{P}\|_C \leq \beta.
\end{equation}
\item We have
\begin{equation} \label{eq:suppcondition}
\begin{split}
\operatorname{supp} (D\bar{u}-\bar{w})_\pm &\subset \left\{\,x \in (0,T)\;|\;\bar{p}(x)= \pm \alpha\,\right\}, 
\\
\operatorname{supp} (D\bar{w})_\pm &\subset \left\{\,x \in (0,T)\;|\;\bar{P}(x)= \pm \beta\,\right\} .
\end{split}
\end{equation}
\end{itemize}
\end{theorem}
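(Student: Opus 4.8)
The plan is to characterize $\partial_2 \TGV(\bar u)$ via convex duality, exploiting the supremum definition~\eqref{eq:tgvsupdef} of $\TGV$ and the infimal convolution structure from Lemma~\ref{lem:tgvinfconv}. The key observation is that $\bar\varphi \in \partial_2\TGV(\bar u)$ is equivalent to requiring that $\bar u$ solve the linear minimization
\begin{align*}
\min_{u \in \BV(0,T)} \TGV(u) - (\bar\varphi, u)_{L^2},
\end{align*}
and that the minimal value equals $-(\bar\varphi,\bar u)_{L^2} + \TGV(\bar u)$, i.e. it equals $\TGV(\bar u) - (\bar\varphi,\bar u)_{L^2}$ with the subgradient inequality saturated. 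Writing $\TGV$ via~\eqref{eq:tgvsupdef} as a supremum over $v \in C^\infty_c(0,T)$ with $\|v\|_C \leq \alpha$, $\|v'\|_C \leq \beta$, and using that $\int_0^T u\,v'' = \int_0^T u\,v''\dd x$ while $(\bar\varphi, u)_{L^2} = \int_0^T \bar\varphi u \dd x$, the candidate dual certificate is a function $v$ with $v'' = \bar\varphi$ in a weak sense; integrating twice against the definitions~\eqref{def:pandP} of $\bar p$ and $\bar P$ identifies $v = \bar P$ (up to boundary terms) and $v' = -\bar p$, which is why the constraints $\|\bar p\|_C \leq \alpha$, $\|\bar P\|_C \leq \beta$ appear.

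Concretely, I would proceed as follows. First, reduce to the infimal convolution picture: by Lemma~\ref{lem:tgvinfconv}, with $\bar v = \bar u - \int_0^\cdot \bar w$ and $\bar h = \int_0^\cdot \bar w$, one has $\TGV(\bar u) = \alpha\TV(\bar v) + \beta\TV(D\bar h)$ with the split optimal. Then the subgradient condition for the sum decouples: $\bar\varphi \in \partial_2\TGV(\bar u)$ forces $\bar\varphi$ to be simultaneously a subgradient of $\alpha\TV(\cdot)$ at $\bar v$ and of $\beta\TV(D\cdot)$ at $\bar h$ in the appropriate sense, because $\TV$ and $\TV \circ D$ are both one-homogeneous and their infimal convolution being exact at $\bar u$ means the Fenchel–Young equality holds for each piece. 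Second, for the $\TV$ piece, the classical duality (one-dimensional version of the Anzellotti/De Giorgi pairing, cf.\ \cite{AmbFusPal00}) gives that $\bar\varphi \in \partial\big(\alpha\TV(\cdot)\big)(\bar v)$ iff there exists $\bar p \in H^1_0(0,T)$ with $\bar p' = -\bar\varphi$ (matching~\eqref{def:pandP} up to sign and the boundary condition $\bar p \in H^1_0$ coming from the orthogonality $(\bar\varphi,\ell)_{L^2}=0$ forced on subgradients), $\|\bar p\|_C \leq \alpha$, and $\bar p = \pm\alpha$ on $\supp(D\bar v)_\pm = \supp(D\bar u - \bar w)_\pm$ — this is the first line of~\eqref{eq:suppcondition} together with $\|\bar p\|_C \leq \alpha$. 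Third, for the $\TV \circ D$ piece, apply the same $\TV$-duality but to the function $\bar w = D\bar h$: there must exist $\bar P \in H^1_0(0,T)$ with $\bar P$ an antiderivative of $\bar p$ (so $\bar P' = -\bar p$, and $\bar P \in H^2 \cap H^1_0$ since $\bar p \in H^1_0$), $\|\bar P\|_C \leq \beta$, and $\bar P = \pm\beta$ on $\supp(D\bar w)_\pm$ — giving the second line of~\eqref{eq:suppcondition} and $\|\bar P\|_C \leq \beta$.

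For the converse, I would assume~\eqref{eq:boundonpandP}–\eqref{eq:suppcondition} hold and verify the subgradient inequality directly: for arbitrary $u \in \BV(0,T)$, integrate by parts twice to write $(\bar\varphi, u - \bar u)_{L^2} = \int_0^T \bar P \dd D(Du) $-type expressions (carefully handling that $u \in \BV$ need not have $Du \in \BV$, in which case $\TV(u) = +\infty$ or rather $\TGV(u)$ may still be finite via the $w$-splitting), so that for any competitor $w$,
\begin{align*}
(\bar\varphi, u-\bar u)_{L^2} = \int_{(0,T)} \bar p \dd(Du - w) + \int_{(0,T)} \bar P \dd w - \int_{(0,T)}\bar p\dd(D\bar u - \bar w) - \int_{(0,T)}\bar P \dd \bar w,
\end{align*}
and then bound $\int \bar p \dd(Du-w) \leq \alpha\|Du-w\|_M$, $\int \bar P \dd w \leq \beta\|Dw\|_M$ using the $C$-norm bounds, while the two subtracted terms equal exactly $\alpha\|D\bar u - \bar w\|_M$ and $\beta\|D\bar w\|_M$ by the support conditions~\eqref{eq:suppcondition}; taking the infimum over $w$ on the right and recognizing $\TGV(\bar u)$ in the subtracted part yields $(\bar\varphi,u-\bar u)_{L^2} + \TGV(\bar u) \leq \TGV(u)$.

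The main obstacle I expect is the \emph{rigorous integration-by-parts / pairing argument} in low regularity: making sense of $\int \bar P \dd Dw$ when $w = D\bar h$ is only $\BV$ (so $D^2\bar h$ is a measure), ensuring all boundary terms vanish (this is precisely where $\bar p, \bar P \in H^1_0$ matters, and why the theorem's hypotheses include those homogeneous boundary conditions rather than merely $C$-norm bounds), and correctly transferring the one-dimensional $\TV$-subdifferential characterization — which is classically stated for $\TV(v)$ with $v \in \BV$ and subgradient in $L^\infty$ or via vector fields in $L^\infty$ with divergence in $L^2$ — to the present $L^2$-subdifferential with the nested antiderivative structure. One must also be careful that the decoupling of the subdifferential of the infimal convolution into the two pieces is justified: this relies on exactness of the infimal convolution at $\bar u$ (guaranteed by the existence of the optimal $\bar w$) together with the fact that both summands are closed, positively one-homogeneous convex functionals, so that $\partial(\alpha\TV \ic \beta(\TV\circ D))(\bar u) = \partial(\alpha\TV)(\bar v) \cap \partial(\beta\TV\circ D)(\bar h)$ where $\bar u = \bar v + \bar h$ is the optimal split — a standard but slightly delicate fact about subdifferentials of infimal convolutions that should be invoked explicitly.
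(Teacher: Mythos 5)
Your proposal is correct, and it reaches the statement by a genuinely different route in one key place. Both arguments share the same skeleton: reduce $\bar\varphi\in\partial_2\TGV(\bar u)$ to (i) orthogonality to $\Lc$, which via integration by parts is exactly $\bar p\in H^1_0$, $\bar P\in H^2\cap H^1_0$ (this is Lemma~\ref{lem:orthogonality}, identical in both), (ii) the global bound $(\bar\varphi,v)_{L^2}\le\TGV(v)$, and (iii) the saturation at $\bar u$, which through the identity $(\bar\varphi,\bar u)_{L^2}=\langle\bar p,D\bar u-\bar w\rangle+\langle\bar P,D\bar w\rangle$ and the measure-support Lemma~\ref{lem:extremalitymeasure} becomes~\eqref{eq:suppcondition}. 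The divergence is in step (ii): the paper proves $\|\bar p\|_C\le\alpha$, $\|\bar P\|_C\le\beta$ by invoking Theorem~\ref{thm:existlinoverext} and the extremal point characterization of Theorem~\ref{thm:tgvextremals}, testing against $S_x$ and $K_x$ (Lemma~\ref{lem:normboundsonpandP}); you instead use exactness of the infimal convolution at $\bar u=\bar v+\bar h$ to decouple $\partial_2\TGV(\bar u)=\partial(\alpha\TV)(\bar v)\cap\partial(\beta\TV\circ D)(\bar h)$ and apply classical one-dimensional $\TV$-duality to each piece. Your decoupling identity is the standard fact you name, and its elementary proof only needs attainment of the inner infimum, which the hypothesis on $\bar w$ provides; it yields $\|\bar p\|_C\le\alpha$ and $\|\bar P\|_C\le\beta$ directly, together with the two support conditions as the two Fenchel--Young equalities. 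What your route buys is independence from the extremal-point machinery, making the theorem essentially self-contained; what it costs is that you must carefully transfer the $L^2$-pairing in $h$ to a pairing in $w=Dh$ (one extra integration by parts, using $\bar p\in H^1_0$ to kill the boundary term, so that $(\bar\varphi,h)_{L^2}=(\bar p,Dh)_{L^2}$ and the second piece becomes $\bar p\in\partial_2(\beta\TV)(\bar w)$), and you must note that $\bar p(T)=0$ comes from invariance of $\TV$ under constants while $\bar P(T)=0$ comes from invariance of $\TV\circ D$ under affine shifts -- your attribution of both $H^1_0$ conditions is slightly loosely phrased but substantively right. Minor notational slips ($\int\bar P\dd w$ should read $\int\bar P\dd Dw$, and $D\bar u-\bar w$ denotes the measure $D\bar u-\bar w\mathscr{L}$) do not affect the argument, and your converse verification coincides with the paper's sufficiency argument.
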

We break down the proof into smaller steps. The following proposition serves as a starting point.
\begin{prop} \label{prop:subdiffpre}
Let~$\bar{u} \in \BV(0,T)$ and~$\bar{\varphi} \in L^2(0,T)$. Then there holds~$\bar{\varphi} \in \partial_2 \TGV(\bar{u})$ if and only if
\begin{align*}
(\bar{\varphi}, \ell)_{L^2}=0,~(\bar{\varphi},\bar{u})_{L^2}=\TGV(\bar{u}),~(\bar{\varphi},v)_{L^2} \leq \TGV(v)
\end{align*} 
for all~$\ell \in \mathcal{L}$,~$v \in \BV(0,T)$.
\end{prop}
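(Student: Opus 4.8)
The plan is to exploit that $\TGV$ is a seminorm on $\BV(0,T)$, i.e.\ it is convex, subadditive, positively one-homogeneous and even ($\TGV(-v)=\TGV(v)$), and to reduce the asserted description to the elementary characterization of the subdifferential of a sublinear functional: $\bar\varphi$ belongs to $\partial_2\TGV(\bar u)$ exactly when it attains the value $\TGV(\bar u)$ at $\bar u$ and is dominated by $\TGV$ everywhere. The vanishing condition on $\mathcal L$ will then come for free, since $\TGV$ annihilates $\mathcal L$; in fact it is already a consequence of the domination inequality applied to $v=\pm\ell$.

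For the ``if'' direction I would simply add and subtract. Assuming $(\bar\varphi,\ell)_{L^2}=0$ for $\ell\in\mathcal L$, $(\bar\varphi,\bar u)_{L^2}=\TGV(\bar u)$, and $(\bar\varphi,v)_{L^2}\le\TGV(v)$ for all $v\in\BV(0,T)$, then for arbitrary $u\in\BV(0,T)$ one has $(\bar\varphi,u-\bar u)_{L^2}=(\bar\varphi,u)_{L^2}-(\bar\varphi,\bar u)_{L^2}\le \TGV(u)-\TGV(\bar u)$, which is precisely the defining inequality in \eqref{def:L2sub}, so $\bar\varphi\in\partial_2\TGV(\bar u)$.

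For the ``only if'' direction I would test the subgradient inequality with suitable competitors. Inserting $u=2\bar u$ and using $\TGV(2\bar u)=2\TGV(\bar u)$ gives $(\bar\varphi,\bar u)_{L^2}\le\TGV(\bar u)$, while inserting $u=0$ and using $\TGV(0)=0$ gives $(\bar\varphi,\bar u)_{L^2}\ge\TGV(\bar u)$; together these yield $(\bar\varphi,\bar u)_{L^2}=\TGV(\bar u)$. Inserting $u=\bar u+t\ell$ with $\ell\in\mathcal L$, $t\in\R$, and using $\TGV(\bar u+t\ell)=\TGV(\bar u)$, one obtains $t(\bar\varphi,\ell)_{L^2}\le 0$ for both signs of $t$, hence $(\bar\varphi,\ell)_{L^2}=0$. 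Finally, inserting $u=\bar u+tv$ with $t>0$ gives $t(\bar\varphi,v)_{L^2}\le \TGV(\bar u+tv)-\TGV(\bar u)$; dividing by $t$, writing $\TGV(\bar u+tv)=t\,\TGV(v+\bar u/t)$ by one-homogeneity, bounding $|\TGV(v+\bar u/t)-\TGV(v)|\le \TGV(\bar u/t)=\TGV(\bar u)/t$ by subadditivity, and letting $t\to\infty$ yields $(\bar\varphi,v)_{L^2}\le\TGV(v)$.

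The only mildly delicate point is the limit $t\to\infty$ in the last step, which rests on combining positive one-homogeneity with subadditivity (equivalently, on $\TGV$ being a seminorm, so that $v\mapsto\TGV(v)$ is Lipschitz with constant $1$ with respect to $\TGV$ itself); once this is noted the argument is otherwise a routine verification, and I do not expect any serious obstacle.
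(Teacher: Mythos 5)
Your proof is correct and follows essentially the same route as the paper, which reformulates the subgradient inequality as the statement that $\bar u$ minimizes $v\mapsto -(\bar\varphi,v)_{L^2}+\TGV(v)$ and invokes the standard characterization of this for positively one-homogeneous, $\mathcal L$-invariant functionals; you simply spell out that characterization by testing with $2\bar u$, $0$, $\bar u+t\ell$ and $\bar u+tv$. The only remark is that your final limiting argument as $t\to\infty$ can be shortened: testing with $u=\bar u+v$ and using subadditivity directly gives $(\bar\varphi,v)_{L^2}\le\TGV(\bar u+v)-\TGV(\bar u)\le\TGV(v)$.
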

\begin{proof}
By definition,~there holds~$\bar{\varphi} \in \partial_2 \TGV(\bar{u})$ if and only if
\begin{align}
(\bar{\varphi},u-\bar{u})_{L^2}+\TGV(\bar{u})\leq \TGV(u) \quad \text{for all}~ u \in \BV(0,T)
\end{align}
Of course, this is equivalent to~$\bar{u}$ being a minimizer of
\begin{align}
\min_{v \in \BV(0,T)} \left \lbrack -(\bar{\varphi},v)_{L^2}+\TGV(v) \right \rbrack.
\end{align}
Since~$\TGV(\cdot)$ is positively one-homogeneous and $\TGV(u+\ell)=\TGV(u)$ for all $u \in \BV(0,T)$ and $\ell \in \Lc$, this holds (with value zero) if and only if
\begin{align}
(\bar{\varphi}, \ell)_{L^2}=0,~(\bar{\varphi},\bar{u})_{L^2}=\TGV(\bar{u}),~(\bar{\varphi},v)_{L^2} \leq \TGV(v)
\end{align} 
for all~$\ell \in \mathcal{L}$ and~$v \in \BV(0,T)$.
\end{proof}
Now we first prove that~$\bar{\varphi}$ is orthogonal to~$\mathcal{L}$ if and only if~$\bar{p}$ and~$\bar{P}$ admit zero boundary conditions.
\begin{lemma} \label{lem:orthogonality}
Let~$\bar{\varphi} \in L^2(0,T)$ be given and let~$\bar{p}$ and~$\bar{P}
$ be defined as in~\eqref{def:pandP}.
The following statements are equivalent:
\begin{itemize}
\item There holds~$(\bar{\varphi},\ell)_{L^2}=0$ for all~$\ell \in \mathcal{L}$.
\item We have~$\bar{p} \in H^1_0(0,T)$ and~$\bar{P} \in H^1_0(0,T) \cap H^2(0,T)$.
\end{itemize}
\end{lemma}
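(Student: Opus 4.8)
The plan is to reduce both bullet points to two scalar endpoint conditions and then match them via the fundamental theorem of calculus. First I would record that, directly from the definitions in \eqref{def:pandP}, the function $\bar p$ is absolutely continuous on $[0,T]$ with $\bar p(0)=0$ and $\bar p'=-\bar\varphi\in L^2(0,T)$, so that $\bar p\in H^1(0,T)$ automatically; likewise $\bar P$ is absolutely continuous with $\bar P(0)=0$ and $\bar P'=-\bar p\in H^1(0,T)$, so $\bar P\in H^2(0,T)$ always holds. Consequently the second bullet is \emph{equivalent} to the two scalar conditions $\bar p(T)=0$ and $\bar P(T)=0$ (the conditions at the left endpoint and the $H^2$-regularity being automatic).

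Next I would unpack the first bullet. Since $\mathcal{L}=\operatorname{span}\{1,\,x\}$, the requirement $(\bar\varphi,\ell)_{L^2}=0$ for all $\ell\in\mathcal{L}$ is equivalent to the two moment conditions $\int_0^T\bar\varphi\,\mathrm{d}x=0$ and $\int_0^T x\,\bar\varphi(x)\,\mathrm{d}x=0$. The zeroth moment is immediate: by the definition of $\bar p$ one has $\int_0^T\bar\varphi\,\mathrm{d}x=-\bar p(T)$, so it vanishes if and only if $\bar p(T)=0$. For the first moment I would integrate by parts using $\bar\varphi=-\bar p'$ (valid since $\bar p$ is absolutely continuous and $x\mapsto x$ is smooth), obtaining
\[
\int_0^T x\,\bar\varphi(x)\,\mathrm{d}x = -\int_0^T x\,\bar p'(x)\,\mathrm{d}x = -T\,\bar p(T) + \int_0^T \bar p(x)\,\mathrm{d}x = -T\,\bar p(T) - \bar P(T),
\]
where in the last step I use $\int_0^T\bar p(x)\,\mathrm{d}x=-\bar P(T)$ from the definition of $\bar P$.

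Putting these identities together concludes the argument: if the first bullet holds, the zeroth moment gives $\bar p(T)=0$, and then the displayed first-moment identity forces $\bar P(T)=0$, hence the second bullet; conversely, if $\bar p(T)=0$ and $\bar P(T)=0$, both moments vanish and the first bullet follows. There is no genuine obstacle here — the construction makes all the regularity claims automatic, and the only point deserving a word of care is the legitimacy of the integration by parts, which is routine.
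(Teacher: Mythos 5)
Your proof is correct and follows essentially the same route as the paper: both reduce the second bullet to the endpoint conditions $\bar p(T)=\bar P(T)=0$ (the regularity and left-endpoint conditions being automatic) and then test $\bar\varphi$ against $1$ and $x$ with an integration by parts. Your first-moment identity $-T\,\bar p(T)-\bar P(T)$ is in fact the correct computation (the paper's displayed version omits the factor $T$, which is immaterial for the equivalence).
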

\begin{proof}
By construction we have~$\bar{p} \in H^1(0,T)$,~$\bar{P} \in H^2(0,T)$ and~$\bar{p}(0)=\bar{P}(0)=0$. Consequently it suffices to show that~$(\bar{\varphi},\ell)_{L^2}=0$ for all~$\ell \in \mathcal{L}$ holds if and only if~$\bar{p}(T)=\bar{P}(T)=0$. Testing~$\bar{\varphi}$ with~$u_1= 1_{(0,T)} \in \mathcal{L}$ and~$u_2=x \in \mathcal{L}$ reveals
\begin{align*}
(\bar{\varphi},u_1)_{L^2}=-\bar{p}(T)
\end{align*}
as well as
\begin{align*}
(\bar{\varphi},u_2)_{L^2}= -\bar{p}(T)+ (\bar{p}, u_1)_{L^2}= -\bar{p}(T)-\bar{P}(T)
\end{align*}
by partial integration.
Noting that~$\mathcal{L}=\operatorname{span}\{u_1,u_2\}$ finishes the proof.
\end{proof}
Next, it is shown that the boundedness condition
\begin{align*}
(\bar{\varphi},v)_{L^2} \leq \TGV(v) \quad \text{for all}~ v \in \BV(0,T)\end{align*}
corresponds to certain bounds on the uniform norm of~$\bar{p}$ and~$\bar{P}$, respectively.
\begin{lemma} \label{lem:normboundsonpandP}
Assume that there holds
\begin{align*}
(\bar{\varphi},\ell)_{L^2}=0 \quad \text{for all} ~\ell \in \mathcal{L}.
\end{align*}
Then the following statements are equivalent:
\begin{itemize}
\item[1.] There holds~$(\bar{\varphi},v)_{L^2} \leq \TGV(v)$ for all~$v \in \BV(0,T)$.
\item[2.] We have
\begin{align*}
\|\bar{p}\|_C \leq \alpha,\text{ and }|\bar{P}(x)|\leq \beta \ \text{for all}~ x \in \left\lbrack \frac{\beta}{\alpha}, T- \frac{\beta}{\alpha} \right \rbrack.
\end{align*}
\item[3.] We have~$\|\bar{p}\|_C \leq \alpha$ and~$\|\bar{P}\|_C \leq \beta$.
\end{itemize}
\end{lemma}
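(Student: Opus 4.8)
The plan is to establish the substantial equivalence $1\Leftrightarrow 3$ first, and then to dispose of $3\Rightarrow 2$ (immediate) and $2\Rightarrow 3$ (an elementary one-dimensional estimate). For $1\Leftrightarrow 3$ I would combine the infimal convolution representation of $\TGV$ from Lemma~\ref{lem:tgvinfconv} with two integration-by-parts identities, one producing the bound $\|\bar p\|_C\le\alpha$ and the other the bound $\|\bar P\|_C\le\beta$.

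By \eqref{eq:tgvinfconv} we have $\TGV(u)=\inf_{u=g+h}[\alpha\TV(g)+\beta\TV(Dh)]$, so statement~1 — the inequality $(\bar\varphi,u)_{L^2}\le\TGV(u)$ holding for every $u\in\BV(0,T)$ — is equivalent, after writing $u=g+h$ and taking the infimum over decompositions, to
\[(\bar\varphi,g)_{L^2}+(\bar\varphi,h)_{L^2}\le\alpha\TV(g)+\beta\TV(Dh)\quad\text{for all }g,h\in\BV(0,T).\]
Since $g$ and $h$ vary independently, setting $h=0$ and then $g=0$ shows this is equivalent to the conjunction of $(\bar\varphi,g)_{L^2}\le\alpha\TV(g)$ for all $g\in\BV(0,T)$ and $(\bar\varphi,h)_{L^2}\le\beta\TV(Dh)$ for all $h\in\BV(0,T)$, the converse direction following by summing the two inequalities and then taking the infimum over decompositions of~$u$.

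Next I would dualize each of these two conditions. By the standing hypothesis $(\bar\varphi,\ell)_{L^2}=0$ and Lemma~\ref{lem:orthogonality}, we have $\bar p\in H^1_0(0,T)$ and $\bar P\in H^2(0,T)\cap H^1_0(0,T)$, hence $\bar p(0)=\bar p(T)=0$ and, since $\bar P'=-\bar p$, also $\bar P'(0)=\bar P'(T)=0$. Integrating by parts in $\BV(0,T)$, where all boundary terms vanish precisely because of these zero conditions, gives $(\bar\varphi,g)_{L^2}=\int_{(0,T)}\bar p\dd Dg$ and $(\bar\varphi,h)_{L^2}=\int_{(0,T)}\bar P\dd D^2h$. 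As $g$ ranges over $\BV(0,T)$ the measure $Dg$ ranges over all of $M(0,T)$ (given $\mu\in M(0,T)$, take $g(x)=\mu((0,x))$ as in the proof of Proposition~\ref{prop:quotientstuff}), and likewise $D^2h$ ranges over $M(0,T)$; so the two conditions above read $\int_{(0,T)}\bar p\dd\mu\le\alpha\|\mu\|_M$ and $\int_{(0,T)}\bar P\dd\mu\le\beta\|\mu\|_M$ for all $\mu\in M(0,T)$. Testing with $\mu=\pm\delta_x$ and using the continuity of $\bar p,\bar P$ together with their vanishing at the endpoints, these are equivalent respectively to $\|\bar p\|_C\le\alpha$ and $\|\bar P\|_C\le\beta$, which is statement~3; this proves $1\Leftrightarrow 3$.

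Finally, $3\Rightarrow 2$ is clear. For $2\Rightarrow 3$, from $\bar P(0)=\bar P(T)=0$ and $|\bar P'|=|\bar p|\le\alpha$ one gets $|\bar P(x)|\le\alpha\min(x,T-x)$ for all $x\in(0,T)$, so $|\bar P(x)|\le\beta$ holds automatically whenever $\min(x,T-x)\le\beta/\alpha$, while for $x\in(\beta/\alpha,T-\beta/\alpha)$ the bound $|\bar P(x)|\le\beta$ is exactly what statement~2 supplies; hence $\|\bar P\|_C\le\beta$, which together with $\|\bar p\|_C\le\alpha$ is statement~3. I expect the only delicate point to be the integration-by-parts bookkeeping just described — tracking which representatives of the $\BV$ functions are used, verifying that the boundary terms genuinely vanish (this is where Lemma~\ref{lem:orthogonality} enters), and checking that $g\mapsto Dg$ and $h\mapsto D^2h$ map onto $M(0,T)$, which is what forces uniform rather than merely integral bounds — everything else being routine.
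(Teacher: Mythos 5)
Your proof is correct, but it reaches the conclusion by a genuinely different route than the paper for the core equivalence. You prove $1\Leftrightarrow 3$ directly: using the infimal convolution form \eqref{eq:tgvinfconv} you split statement 1 into the two independent conditions $(\bar\varphi,g)_{L^2}\le\alpha\TV(g)$ and $(\bar\varphi,h)_{L^2}\le\beta\TV(Dh)$, and then dualize each via integration by parts and the surjectivity of $g\mapsto Dg$ and $h\mapsto D^2h$ onto $M(0,T)$ — in effect computing the polar of the $\TGV$-ball as the intersection of the polars of the two constituent balls. This is more elementary and self-contained: it uses only Lemma \ref{lem:tgvinfconv} and Lemma \ref{lem:orthogonality}, and in particular does not need the extremal point characterization at all. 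The paper instead proves $1\Leftrightarrow 2$ by rewriting statement 1 as $\max_{v\in B}(\bar\varphi,v)_{L^2}\le 1$, reducing to $\max_{v\in\ext B}(\bar\varphi,v)_{L^2}\le 1$ via Theorem \ref{thm:existlinoverext}, and evaluating $(\bar\varphi,\pm S_x)_{L^2}=\pm\bar p(x)$ and $(\bar\varphi,\pm K_x)_{L^2}=\pm\bar P(x)$; this makes condition 2 transparent (it is literally boundedness of $\bar\varphi$ on the extremal points, which explains why $|\bar P|\le\beta$ is only required on $[\beta/\alpha,\,T-\beta/\alpha]$) and showcases the main theorem, at the price of invoking heavier machinery. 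The paper then gets $3\Rightarrow 1$ from the supremum definition \eqref{eq:tgvsupdef}, whereas for you $3\Rightarrow 2$ is trivial; the closing step $2\Rightarrow 3$ (the bound $|\bar P(x)|\le\alpha\min(x,T-x)$ near the boundary) is identical in both arguments. All steps in your version check out, including the boundary terms in the two integrations by parts, which vanish precisely because Lemma \ref{lem:orthogonality} gives $\bar p\in H^1_0(0,T)$ and $\bar P\in H^1_0(0,T)$.
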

\begin{proof}
Note that there holds
\begin{align*}
(\bar{\varphi},v)_{L^2} \leq \TGV(v) \ \text{for all}~v \in \BV(0,T)  \Leftrightarrow \max_{v \in B} (\bar{\varphi},v)_{L^2} \leq 1  \Leftrightarrow \max_{v \in \ext B} (\bar{\varphi},v)_{L^2} \leq 1.
\end{align*}
due to the positive one-homogeneity of the~$\TGV$-functional as well as Theorem~\ref{thm:existlinoverext}.
Testing~$\bar{\varphi}$ with~$\pm S_x$ and~$\pm K_x$ and integrating by parts yields
\begin{align*}
\pm(\bar{\varphi},S_{x})_{L^2}= \pm \bar{p}(x),~(\bar{\varphi},K_{x})_{L^2}=(\bar{p},DK_{x})_{L^2}= \pm \bar{P}(x)
\end{align*}
for every~$x \in (0,T)$. Due to the characterization of~$\ext B$ in Theorem \ref{thm:tgvextremals} we thus conclude the equivalence of~$1.$ and~$2.$. Moreover, we directly get that ~$3.$ implies~$1.$ by the definition of $\TGV$ in \eqref{eq:tgvsupdef} and of $\bar{p}$ and $\bar{P}$ in \eqref{def:pandP}. Hence it only remains to show that~$2.$ implies~$3.$. To this end, we estimate
\begin{align*}
|\bar{P}(x)| \leq \int^x_0 |p_k(x)|~\mathrm{d}x \leq \alpha |x| \leq \beta \quad \text{for all}~ x \in \left(0, \frac{\beta}{\alpha} \right\rbrack 
\end{align*}
as well as
\begin{align*}
|\bar{P}(x)| \leq \int^T_x |p_k(x)|~\mathrm{d}x \leq \alpha |T-x| \leq \beta  \quad \text{for all}~ x \in \left\lbrack T -\frac{\beta}{\alpha},T \right) 
\end{align*}
using
\begin{align*}
\bar{P}(x)= - \int^x_0 \bar{p}(x)~\mathrm{d}x=\int^T_x \bar{p}(x)~\mathrm{d}x-\bar{P}(T)=\int^T_x \bar{p}(x)~\mathrm{d}x.
\end{align*}
Hence we have~$\|\bar{P}\|_C \leq \beta$.
\end{proof}
Finally we require the following result on the support of Radon measures.  
\begin{lemma} \label{lem:extremalitymeasure}
Let~$\mu  \in M(0,T) $ and~$p \in C_0(0,T)$ be given. Moreover let~$\gamma>0$ with~$\|p\|_C \leq \gamma$ be given. Then there holds
\begin{align} \label{eq:extremalcond}
\langle p, \mu \rangle=\gamma\|\mu\|_M
\end{align}
if and only if
\begin{align} \label{eq:suppcondaux}
\operatorname{supp} \mu_\pm \subset \left\{\,x \in (0,T)\;|\;\bar{p}(x)=\pm\gamma\, \right\}  
\end{align}
where~$\mu=\mu_+-\mu_{-}$ denotes the Jordan decomposition.
\end{lemma}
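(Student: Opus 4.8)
The plan is to reduce the claimed equivalence to the elementary fact that a nonnegative continuous function integrates to zero against a positive measure precisely when the support of the measure is contained in the zero set of the function. First I would record the one-sided bound that holds unconditionally: writing $\mu = \mu_+ - \mu_-$ for the Jordan decomposition, so that $\|\mu\|_M = \mu_+(0,T) + \mu_-(0,T)$, and using $-\gamma \leq p \leq \gamma$ pointwise together with $\mu_\pm \geq 0$, one obtains
\[
\langle p, \mu \rangle = \int_{(0,T)} p \dd\mu_+ - \int_{(0,T)} p \dd\mu_- \leq \gamma\, \mu_+(0,T) + \gamma\, \mu_-(0,T) = \gamma \|\mu\|_M .
\]
Since each of the two summands on the left is dominated by the corresponding summand on the right, the equality \eqref{eq:extremalcond} holds if and only if \emph{both} individual equalities $\int_{(0,T)} p \dd\mu_+ = \gamma\, \mu_+(0,T)$ and $\int_{(0,T)} (-p)\dd\mu_- = \gamma\, \mu_-(0,T)$ are satisfied simultaneously.

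Next I would rewrite these two equalities as $\int_{(0,T)} (\gamma - p)\dd\mu_+ = 0$ and $\int_{(0,T)} (\gamma + p)\dd\mu_- = 0$. The integrands $\gamma - p$ and $\gamma + p$ are continuous and nonnegative on $(0,T)$ precisely because $\|p\|_C \leq \gamma$. The key observation is then that for any positive Radon measure $\nu$ on $(0,T)$ and any nonnegative $g \in C_0(0,T)$ one has $\int_{(0,T)} g \dd\nu = 0$ if and only if $\operatorname{supp}\nu \subset \{\, x \in (0,T) \mid g(x) = 0 \,\}$: the set $\{g > 0\}$ is open by continuity of $g$, and if the integral vanishes then this set is $\nu$-null, hence contained in the largest open $\nu$-null set, i.e.\ disjoint from $\operatorname{supp}\nu$; conversely, if $\operatorname{supp}\nu \subset \{g = 0\}$ then $g$ vanishes $\nu$-almost everywhere since $\nu$ is concentrated on its support. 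Applying this with $\nu = \mu_+$ and $g = \gamma - p$ yields $\operatorname{supp}\mu_+ \subset \{\, x \mid p(x) = \gamma \,\}$, and with $\nu = \mu_-$ and $g = \gamma + p$ yields $\operatorname{supp}\mu_- \subset \{\, x \mid p(x) = -\gamma \,\}$; together these are exactly the condition \eqref{eq:suppcondaux}.

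I do not expect a serious obstacle here: the only mild subtlety is the passage between ``$g = 0$ holds $\nu$-almost everywhere'' and ``$\operatorname{supp}\nu \subset \{g = 0\}$'', which relies on continuity of $g$ (so that $\{g \neq 0\}$ is open) and on the characterization of the support as the complement of the largest open $\nu$-null set. It is also worth noting that the argument uses nothing specific to the interval $(0,T)$ or to the $\TGV$ setting — it is purely a statement about the scalar pairing and the Jordan decomposition — so the same reasoning applies verbatim on any locally compact Hausdorff space.
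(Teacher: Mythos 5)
Your proof is correct, and it is organized differently from the one in the paper. The paper treats the trivial case $\mu=0$ separately, observes that otherwise both conditions force $\|p\|_C=\gamma$, and then splits the equivalence into two implications: the direction from \eqref{eq:extremalcond} to \eqref{eq:suppcondaux} is delegated entirely to a citation of an external lemma, while the converse is a one-line computation using that $p=\pm\gamma$ on $\supp\mu_\pm$. You instead give a single self-contained chain of equivalences: splitting the pairing along the Jordan decomposition, noting that the global equality forces equality in each summand separately (since each is individually dominated), rewriting those as $\int(\gamma-p)\dd\mu_+=0$ and $\int(\gamma+p)\dd\mu_-=0$, and invoking the elementary fact that a nonnegative continuous function integrates to zero against a positive Radon measure iff the support of the measure sits inside its zero set. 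This buys a proof that needs no case distinction on $\mu=0$ or on whether $\|p\|_C=\gamma$ (both degenerate situations are absorbed automatically, since a strict inequality $\|p\|_C<\gamma$ makes the level sets empty and forces $\mu=0$ on both sides of the equivalence), and it removes the dependence on the cited result. The one point to state carefully is the equivalence between ``$g=0$ $\nu$-a.e.'' and ``$\supp\nu\subset\{g=0\}$'': the forward use needs $\{g>0\}=\bigcup_n\{g>1/n\}$ to be an open $\nu$-null set, and the converse needs $\nu$ to be concentrated on its support, which holds here by inner regularity of Radon measures (and trivially on the second-countable space $(0,T)$). You flag exactly this subtlety, so the argument is complete.
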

\begin{proof}
Obviously, both,~\eqref{eq:extremalcond} and~\eqref{eq:suppcondaux}, trivially hold if~$\mu=0$. Thus, w.l.o.g, assume that~$\mu\neq 0$. In this case, both conditions can only hold if~$\|p\|_C = \gamma$ due to
\begin{align*}
\langle p, \mu \rangle \leq \|p\|_C \|\mu\|_M \leq \gamma \|\mu\|_M
\end{align*}
and~$\operatorname{supp} \mu \neq \emptyset$.
Now, let us first assume~\eqref{eq:extremalcond}. Since~$\mu \neq 0$ and~$p \neq 0$,~\eqref{eq:suppcondaux} is implied by~\cite[Lemma 3.4]{KKsparse}.
Conversely, assume~\eqref{eq:suppcondaux}. Then we have
\begin{align*}
\langle p, \mu \rangle &= \int^T_0 p(x)~\mathrm{d}\mu(x)= \gamma \int^T_0 ~\mathrm{d}|\mu|(x)=\alpha \|\mu\|_M,
\end{align*}
finishing the proof.
\end{proof}
Putting together the previous observations we finally prove Theorem~\ref{thm:subdiff}.
\begin{proof}[Proof of Theorem \ref{thm:subdiff}]
In virtue of Proposition~\ref{prop:subdiffpre} and Lemmas~\ref{lem:orthogonality} and~\ref{lem:normboundsonpandP} it suffices to show that~\eqref{eq:suppcondition} is equivalent to
\begin{align} \label{eq:extremalTGV}
(\bar{\varphi},\bar{u})_{L^2}=\TGV(\bar{u})
\end{align}
if we have~$\|\bar{p}\|_C \leq \alpha$ and $\|\bar{P}\|_C \leq \beta$. For this purpose note that there holds
\begin{align*}
\TGV(\bar{u})= \alpha \|D\bar{u}-\bar{w}\|_M+\beta \|D\bar{w}\|_M.
\end{align*}
as well as 
\begin{align*}
(\bar{\varphi},\bar{u})_{L^2}= \langle \bar{p}, D\bar{u}-\bar{w} \rangle+\langle \bar{P} , D \bar{w}\rangle
\end{align*}
due to Lemma~\ref{lem:tgvinfconv} and partial integration. Now we point out
\begin{align*}
\langle \bar{p}, D\bar{u}-\bar{w} \rangle \leq \|p\|_C \|D\bar{u}-\bar{w}\|_M,~\langle \bar{P} , D \bar{w}\rangle \leq \|P\|_C \|D\bar{w}\|_M.
\end{align*}
Consequently,~\eqref{eq:extremalTGV} holds if and only if
\begin{align*}
\langle \bar{p}, D\bar{u}-\bar{w} \rangle = \|p\|_C \|D\bar{u}-\bar{w}\|_M,~\langle \bar{P} , D \bar{w}\rangle = \|P\|_C \|D\bar{w}\|_M.
\end{align*}
The equivalence between then~\eqref{eq:suppcondition} and~\eqref{eq:extremalTGV} follows immediately from Lemma~\ref{lem:extremalitymeasure}.
\end{proof}
In the following we silently assume the existence of a minimizer to~\eqref{def:minprob}. The Riesz-representative of the Fr\'echet-derivative of~$f$ at~$u\in L^2(0,T)$ is further denoted by~$\nabla f(u)\in L^2(0,T)$. Using Theorem~\ref{thm:subdiff}, the following first order necessary optimality conditions for Problem~\eqref{def:minprob} are readily derived. 
\begin{theorem}
\label{thm:necessaryopt}
Let~$\bar{u} \in \BV(0,T)$ denote a minimizer to~\eqref{def:minprob} and set
\begin{align} \label{def:pandPopt}
\bar{p}= \int^{\boldsymbol{\cdot}}_0 \nabla f(\bar{u})(x)~\mathrm{d}x, \quad \bar{P}(x)=- \int^{\boldsymbol{\cdot}}_0 \bar{p}(x)~\mathrm{d}x.
\end{align}
Then~$\bar{p}$ and~$\bar{P}$ satisfy~\eqref{eq:boundonpandP} and~\eqref{eq:suppcondition}.
If~$f$ is convex and~\eqref{eq:boundonpandP} as well as~\eqref{eq:suppcondition} are satisfied for $\bar{p}, \bar{P}$ in \eqref{def:pandPopt}, then~$\bar{u}$ is a solution to~\eqref{def:minprob}. 
\end{theorem}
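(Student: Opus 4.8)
The plan is to derive Theorem~\ref{thm:necessaryopt} essentially as a corollary of Theorem~\ref{thm:subdiff}, by recognizing that the first-order condition for~\eqref{def:minprob} is a $\TGV$-subdifferential inclusion. First I would note that if $\bar u$ is a minimizer of $J(u)=f(u)+\TGV(u)$ with $f$ smooth (Fr\'echet-differentiable on $L^2(0,T)$), then for any $u \in \BV(0,T)$ and $t \in (0,1)$ the competitor $\bar u + t(u-\bar u)$ yields
\begin{align*}
0 \leq \frac{J(\bar u + t(u-\bar u)) - J(\bar u)}{t} = \frac{f(\bar u + t(u-\bar u)) - f(\bar u)}{t} + \frac{\TGV(\bar u + t(u-\bar u)) - \TGV(\bar u)}{t}.
\end{align*}
Using convexity of $\TGV$ the second difference quotient is bounded above by $\TGV(u)-\TGV(\bar u)$, and letting $t \downarrow 0$ the first converges to $(\nabla f(\bar u), u - \bar u)_{L^2}$. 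Hence $(\nabla f(\bar u), u-\bar u)_{L^2} + \TGV(\bar u) \leq \TGV(u)$ for all $u$, i.e.\ $-\nabla f(\bar u) \in \partial_2 \TGV(\bar u)$ in the sense of~\eqref{def:L2sub}.

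Next I would apply Theorem~\ref{thm:subdiff} with $\bar\varphi = -\nabla f(\bar u)$. The functions associated to $\bar\varphi$ via~\eqref{def:pandP} are $-\int_0^{\boldsymbol\cdot}\bar\varphi = \int_0^{\boldsymbol\cdot}\nabla f(\bar u)$ and its negative second primitive, which are exactly $\bar p$ and $\bar P$ as defined in~\eqref{def:pandPopt} (the sign bookkeeping here is the one genuinely fiddly point: one must check that the $\bar p$ in~\eqref{def:pandPopt} equals the $\bar p$ built from $\bar\varphi=-\nabla f(\bar u)$ in~\eqref{def:pandP}, since~\eqref{def:pandP} has a minus sign while~\eqref{def:pandPopt} does not). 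Granting this, Theorem~\ref{thm:subdiff} immediately gives that $\bar p, \bar P$ satisfy~\eqref{eq:boundonpandP} and~\eqref{eq:suppcondition}, which is the first assertion. Here $\bar w$ is any element realizing the infimum defining $\TGV(\bar u)$ in~\eqref{eq:tgvinfdef}, which exists by the lower semicontinuity/coercivity underpinning Proposition~\ref{prop:quotientstuff}.

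For the converse under convexity of $f$, I would reverse the argument: assume $\bar p, \bar P$ from~\eqref{def:pandPopt} satisfy~\eqref{eq:boundonpandP} and~\eqref{eq:suppcondition}. By Theorem~\ref{thm:subdiff} (the ``if'' direction) this means $-\nabla f(\bar u) \in \partial_2 \TGV(\bar u)$, i.e.\ $(\nabla f(\bar u), u - \bar u)_{L^2} + \TGV(\bar u) \leq \TGV(u)$ for all $u \in \BV(0,T)$. Since $f$ is convex and differentiable, $f(u) \geq f(\bar u) + (\nabla f(\bar u), u - \bar u)_{L^2}$. Adding the two inequalities gives $f(u) + \TGV(u) \geq f(\bar u) + \TGV(\bar u)$ for all $u$, i.e.\ $\bar u$ solves~\eqref{def:minprob}.

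I expect no serious obstacle: the only real content is Theorem~\ref{thm:subdiff}, already proved, and the standard variational-inequality manipulation above. The main care needed is (i) the sign/primitive bookkeeping identifying the two definitions of $\bar p, \bar P$, and (ii) making sure the difference-quotient limit for $f$ is justified — which is immediate from Fr\'echet-differentiability on $L^2(0,T)$ together with the fact that $\bar u + t(u - \bar u) \in L^2(0,T)$ by the Sobolev embedding $\BV(0,T)\hookrightarrow L^2(0,T)$ on the bounded interval.
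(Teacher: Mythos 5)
Your proposal is correct and follows essentially the same route as the paper: establish the inclusion $-\nabla f(\bar u) \in \partial_2 \TGV(\bar u)$ from minimality, invoke Theorem~\ref{thm:subdiff} for the characterization via $\bar p$ and $\bar P$ (with the sign convention matching \eqref{def:pandP} to \eqref{def:pandPopt} exactly as you checked), and use convexity of $f$ for sufficiency. The paper's proof is terser, simply asserting the subdifferential inclusion, whereas you spell out the difference-quotient argument; the content is identical.
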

\begin{proof}
Since~$f$ is Fr\'echet-differentiable and~$\bar{u}$ is a minimizer of~\eqref{def:minprob} there holds~$-\nabla f(\bar{u}) \in \partial_2 \TGV(\bar{u})$. According to Theorem~\ref{thm:subdiff}, this is equivalent to~$\bar{p}$ and~$\bar{P}$ satisfying~\eqref{eq:boundonpandP} and~\eqref{eq:suppcondition}. Finally, if~$f$ is convex, then~$-\nabla f(\bar{u}) \in \partial_2 \TGV(\bar{u})$ is also sufficient for optimality. This finishes the proof.
\end{proof}
Similar conditions were already derived in~\cite{PapBre15} for a quadratic~$f$. However their results rely on the Fenchel duality theorem for convex problems. In contrast, we take the direct approach involving the subdifferential in Theorem~\ref{thm:subdiff} which easily extends to the nonconvex case. Exploiting Theorem~\ref{thm:necessaryopt}, we can conclude first structural properties of minimizers to~\eqref{def:minprob}. In particular, every minimizer is affine linear close to the boundary.
\begin{lemma} \label{lem:affineatbound}
Let~$\bar{u} \in \BV(0,T)$ denote a minimizer to~\eqref{def:minprob}. Then there is~$\eps>0$ as well as~$a_1,a_2,b_1,b_2 \in \mathbb{R} $ such that
\begin{align*}
\bar{u}(x)= a_1 x+b_1 \text{ for } x \in (0,\eps),\text{ and }~\bar{u}(x)= a_2 x+b_2 \text{ for }x \in (T-\eps,T).
\end{align*}
\end{lemma}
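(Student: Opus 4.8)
The plan is to deduce this essentially for free from the necessary optimality conditions of Theorem~\ref{thm:necessaryopt}. Fix the minimizer $\bar u$, pick $\bar w \in \BV(0,T)$ realizing $\TGV(\bar u) = \alpha\|D\bar u - \bar w\|_M + \beta\|D\bar w\|_M$, and let $\bar p, \bar P$ be the primitives defined in~\eqref{def:pandPopt}. Theorem~\ref{thm:necessaryopt} then supplies both~\eqref{eq:boundonpandP} and~\eqref{eq:suppcondition}. In particular $\bar p \in H^1_0(0,T)$, hence $\bar p$ is continuous with $\bar p(0)=\bar p(T)=0$ and $\|\bar p\|_C \leq \alpha$; likewise $\bar P \in H^2(0,T)\cap H^1_0(0,T)$ is continuous with $\bar P(0)=\bar P(T)=0$ and $\|\bar P\|_C \leq \beta$. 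For the smallness of $\bar P$ near the endpoints one may alternatively invoke the crude bounds $|\bar P(x)| \leq \int_0^x |\bar p| \leq \alpha|x|$ and $|\bar P(x)| \leq \int_x^T |\bar p| \leq \alpha|T-x|$, exactly as in Lemma~\ref{lem:normboundsonpandP}, which already yield $|\bar P| < \beta$ on a $\beta/\alpha$-neighbourhood of $\partial(0,T)$.

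First I would use continuity to choose $\varepsilon_1 > 0$ so small that $|\bar p(x)| < \alpha$ for all $x \in (0,\varepsilon_1)\cup(T-\varepsilon_1,T)$; this is possible precisely because $\bar p$ is continuous and vanishes at $0$ and $T$. The first inclusion in~\eqref{eq:suppcondition} then forces $(D\bar u-\bar w)_\pm$ to carry no mass on $(0,\varepsilon_1)\cup(T-\varepsilon_1,T)$, i.e. $D\bar u = \bar w$ as measures on each of those intervals. Next I would similarly pick $\varepsilon_2 > 0$ with $|\bar P(x)| < \beta$ on $(0,\varepsilon_2)\cup(T-\varepsilon_2,T)$; the second inclusion in~\eqref{eq:suppcondition} then gives $(D\bar w)_\pm$ no mass there, so $\bar w$ agrees almost everywhere with a constant on each of $(0,\varepsilon_2)$ and $(T-\varepsilon_2,T)$.

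Finally, with $\varepsilon := \min(\varepsilon_1,\varepsilon_2)$, on $(0,\varepsilon)$ we have $D\bar u = \bar w \equiv a_1$ for some constant $a_1$, so the good representative of $\bar u$ satisfies $\bar u(x) = a_1 x + b_1$ there; the same argument on $(T-\varepsilon,T)$ gives $\bar u(x) = a_2 x + b_2$. I do not anticipate a genuine obstacle: the substance is already contained in Theorems~\ref{thm:subdiff} and~\ref{thm:necessaryopt}, and the only mildly delicate point is the translation between the measure identities $D\bar u = \bar w$, $D\bar w = 0$ on subintervals and the pointwise affine form of $\bar u$, which is routine for $\BV$ functions of one variable.
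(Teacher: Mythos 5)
Your proposal is correct and follows essentially the same route as the paper's proof: both use the continuity of $\bar p$ and $\bar P$ together with their vanishing at the boundary (via $H^1_0(0,T)\hookrightarrow C_0(0,T)$) to find a neighbourhood of $\partial(0,T)$ where $|\bar p|<\alpha$ and $|\bar P|<\beta$, then apply the support conditions~\eqref{eq:suppcondition} to conclude $D\bar w=0$ and $D\bar u=\bar w$ there, and integrate. The extra remark about the crude bounds $|\bar P(x)|\leq \alpha|x|$ is a harmless alternative already present in Lemma~\ref{lem:normboundsonpandP} but not needed here.
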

\begin{proof}
Since~$\bar{p}, \bar{P} \in H^1_0(0,T)$ and~$H^1_0(0,T) \hookrightarrow C_0(0,T)$ there is~$\eps>0$ such that
\begin{align*}
\max\left\{|\bar{p}(x)|/\alpha, |\bar{P}(x)|/\beta \right\} <1 \quad \forall x \in (0,\eps) \cup (T-\eps,T).
\end{align*} 
Hence from~\eqref{eq:suppcondition} we conclude
\begin{align*}
D\bar{w}=0,~D\bar{u}-\bar{w}=0 \quad \text{on}~(0,\eps) \cup (T-\eps,T).
\end{align*}
By integration this implies~$D\bar{u}=\bar{w} \equiv a_1$ and thus~$\bar{u}(x)=a_1c+b_1$ for some~$a_1,a_2 \in \R$ on~$(0,\eps)$. The statement for~$(T-\eps,T)$ follows analogously.
\end{proof}
Moreover, if the regularization parameters are large, then~$\TGV(\bar{u})=0$, i.e.,~$\bar{u}\in \mathcal{L}$. We note that results in this spirit were proved in \cite{PapVal15} for the the denoising case.
\begin{lemma} \label{lem:affineoverall}
Let~$f \colon L^2(0,T) \to \R$ be norm-coercive, i.e., there holds
\begin{align*}
\|u_k\|_{L^2}\rightarrow +\infty \Rightarrow f(u_k)\rightarrow +\infty.
\end{align*}
Moreover assume that~$\nabla f$ maps bounded sets to bounded sets. Then there are~$\alpha^*,\beta^* >0$ such that for all~$\alpha > \alpha^*,\beta > \beta^*$, every solution~$\bar{u}$ to~\eqref{def:minprob} is of the form~$\bar{u}(x)=a_{\bar{u}}x+b_{\bar{u}}$ for some~$a_{\bar{u}},b_{\bar{u}} \in \R$.
\end{lemma}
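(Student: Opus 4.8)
The plan is to combine a uniform-in-$(\alpha,\beta)$ a priori bound on minimizers with the necessary optimality conditions of Theorem~\ref{thm:necessaryopt}: once $\alpha$ and $\beta$ are large enough, the dual variables $\bar p$, $\bar P$ stay strictly below their allowed uniform-norm bounds, which through the support conditions~\eqref{eq:suppcondition} forces $\TGV(\bar u)=0$, i.e.\ $\bar u\in\Lc$.

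First I would bound minimizers in $L^2$ independently of the parameters. If $\bar u$ solves~\eqref{def:minprob}, comparing with the admissible competitor $u\equiv 0$ gives
\[
f(\bar u)\le f(\bar u)+\TGV_{\alpha,\beta}(\bar u)=J(\bar u)\le J(0)=f(0),
\]
since $\TGV_{\alpha,\beta}\ge 0$ and $\TGV_{\alpha,\beta}(0)=0$. By norm-coercivity the sublevel set $\{u\in L^2(0,T)\mid f(u)\le f(0)\}$ is bounded, say of radius $R$; hence $\|\bar u\|_{L^2}\le R$ for \emph{every} minimizer and \emph{every} choice of $\alpha,\beta>0$ (recall $\BV(0,T)\hookrightarrow L^2(0,T)$ since $T<\infty$). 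Since $\nabla f$ maps bounded sets to bounded sets, there is a constant $C>0$, depending only on $f$ through $R$, such that $\|\nabla f(\bar u)\|_{L^2}\le C$.

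Next I would convert this into pointwise bounds on the dual variables of Theorem~\ref{thm:necessaryopt}. With $\bar p(x)=\int_0^x\nabla f(\bar u)$ and $\bar P(x)=-\int_0^x\bar p$, the Cauchy--Schwarz inequality on $(0,T)$ yields $\|\bar p\|_C\le \sqrt T\,\|\nabla f(\bar u)\|_{L^2}\le \sqrt T\,C$, and then $\|\bar P\|_C\le T\,\|\bar p\|_C\le T^{3/2}C$. Setting $\alpha^\ast:=\sqrt T\,C$ and $\beta^\ast:=T^{3/2}C$, for every $\alpha>\alpha^\ast$ and $\beta>\beta^\ast$ one obtains the \emph{strict} inequalities $\|\bar p\|_C<\alpha$ and $\|\bar P\|_C<\beta$; in particular the level sets $\{x\in(0,T)\mid \bar p(x)=\pm\alpha\}$ and $\{x\in(0,T)\mid \bar P(x)=\pm\beta\}$ appearing in~\eqref{eq:suppcondition} are empty.

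To conclude, let $\bar w\in\BV(0,T)$ be an optimal inner variable, $\TGV_{\alpha,\beta}(\bar u)=\alpha\|D\bar u-\bar w\|_M+\beta\|D\bar w\|_M$, as in Theorem~\ref{thm:necessaryopt}. That theorem guarantees that $\bar p,\bar P$ satisfy~\eqref{eq:suppcondition}, and the emptiness of the two level sets forces $(D\bar u-\bar w)_\pm=0$ and $(D\bar w)_\pm=0$, i.e.\ $D\bar u=\bar w$ and $D\bar w=0$ as Radon measures. Hence $\bar w$ is a.e.\ equal to a constant $a_{\bar u}$ and $\bar u$ is Lipschitz with $\bar u'=\bar w=a_{\bar u}$ a.e.; integrating, $\bar u(x)=a_{\bar u}x+b_{\bar u}$ for some $b_{\bar u}\in\R$, which is the claim. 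The only point that genuinely needs care is that the thresholds $\alpha^\ast,\beta^\ast$ be independent of $\alpha$ and $\beta$; this is exactly what the comparison $J(\bar u)\le J(0)$ secures, since its right-hand side $f(0)$ — and therefore $R$ and $C$ — does not involve the regularization parameters. Everything else is a routine chain of elementary estimates together with a direct application of Theorem~\ref{thm:necessaryopt}.
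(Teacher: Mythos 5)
Your proposal is correct and follows essentially the same route as the paper: a parameter-independent $L^2$ bound on minimizers obtained by comparing with $u\equiv 0$, the bounded-image property of $\nabla f$ to control $\|\nabla f(\bar u)\|_{L^2}$, elementary estimates on the primitives $\bar p,\bar P$, and the support conditions \eqref{eq:suppcondition} to force $D\bar w=0$ and $D\bar u=\bar w$. The only cosmetic differences are that you argue the uniform bound directly from coercivity rather than by contradiction, and your thresholds $\sqrt{T}\,C$, $T^{3/2}C$ differ from the paper's $TM$, $T^2M$ by harmless constants.
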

\begin{proof}
First, we claim that the solution set of~\eqref{def:minprob} is bounded in~$L^2(0,T)$ independently of~$\alpha$ and~$\beta$. If this is not the case, then there are sequences~$\{(\alpha_k,\beta_k)\}_k$ as well as ~$\{\bar{u}_k\}_k$ such that $\bar{u}_k$ is a minimizer of~\eqref{def:minprob} with~$\alpha=\alpha_k,~\beta=\beta_k$ as well as~$\|\bar{u}_k\|_{L^2(\Omega)}\rightarrow+\infty$. Then there also holds
\begin{align*}
+\infty= \limsup_{k \rightarrow \infty} f(\bar{u}_k)\leq \limsup_{k \rightarrow \infty} J(\bar{u}_k)\leq J(0)=f(0)< +\infty
\end{align*}
which yields a contradiction. Now, let~$\alpha,\beta >0$ be arbitrary and let~$\bar{u}$ denote any minimizer of~\eqref{def:minprob} with dual variables~$\bar{p},~\bar{P}$ as defined in~\eqref{def:pandPopt} and~$\bar{w} \in \BV(0,T)$ with
\begin{align*}
\TGV(\bar{u})= \alpha \|D\bar u- \bar w\|_{M} + \beta \|D \bar w\|_M.
\end{align*}
Since~$\nabla f$ maps bounded sets to bounded sets, there holds~$\|\nabla f(\bar{u})\|_{L^2} \leq M$ for some~$M>0$ independent of~$\bar{u}$ as well as of~$\alpha,\beta$. For every~$x \in (0,T)$ we then estimate 
\begin{align*}
|\bar{p}(x)| \leq \int^T_0 |\nabla f(\bar{u})(x)|~\mathrm{d}x \leq T \|\nabla f(\bar{u})\|_{L^2} \leq T M 
\end{align*}
as well as
\begin{align*}
|\bar{P}(x)| \leq T \max_{x \in (0,T)} |\bar{p}(x)| \leq T^2 \|\nabla f(\bar{u})\|_{L^2} \leq T^2 M.
\end{align*}
Hence, for every~$\alpha> TM$ and $\beta > T^2 M$, we conclude
\begin{align*}
D\bar{w}=0,~D\bar{u}-\bar{w}=0 \quad \text{on}~(0,T)
\end{align*}
from~\eqref{eq:suppcondition}. As in Lemma~\ref{lem:affineatbound}, we finally get that~$\bar{u}$ is affine linear finishing the proof for~$\alpha^*=TM$ and $\beta^*=T^2 M$.
\end{proof}
\subsection{Existence of sparse minimizers} \label{subsec:convexrepre}
In this section we put the focus on investigating sufficient conditions for the existence of a sparse minimizer~$\bar{u}$ in the sense of~\eqref{def:sparseuintro}. Two characteristic settings are considered. First, we show that any minimizer of~\eqref{def:minprob} is sparse if the associated dual variables~$\bar{p}$ and~$\bar{P}$ only admite a finite number of global maximizers and minimizers.
\begin{theorem}
\label{thm:finitextremasets}
Let~$\bar{u} \in \BV(0,T)$ denote a minimizer to~\eqref{def:minprob} and let~$\bar{w} \in \BV(0,T)$ be such that
\begin{align*}
\TGV(\bar{u})= \alpha \|D\bar u- \bar w\|_{M} + \beta \|D \bar w\|_M.
\end{align*}
Moreover let~$\bar{p}$ and~$\bar{P}$ be defined as in~\eqref{def:pandPopt} and assume that there are sets $\{\bar{x}^S_j\}^{N_S}_{j=1}$,~$\{\bar{x}^K_j\}^{N_K}_{j=1}$ in $(0,T)$,~$N_S, N_K \geq 0$, with
\begin{align} \label{def:finitemax}
\left\{\,x \in (0,T)\;|\;\bar{p}(x)= \pm \alpha\,\right\}=\left\{\bar{x}^S_j\right\}^{N_S}_{j=1},\quad \left\{\,x \in (0,T)\;|\;\bar{P}(x)= \pm \beta\,\right\}=\left\{\bar{x}^K_j\right\}^{N_K}_{j=1}.
\end{align}
Then~$\bar{u}$ is of the form~\eqref{def:sparseuintro} where
\begin{align*}
\bar{\sigma}^S_j= \bar{p}(\bar{x}^S_j)/\alpha,~j=1, \dots,N_S,\quad \bar{\sigma}^K_j= \bar{P}(\bar{x}^K_j)/\beta,~j=1, \dots,N_K
\end{align*}
and~$(\bar{\lambda}^S,\bar{\lambda}^K,\bar{a},\bar{b})$ is a solution to
\begin{align} \label{eq:subprobopti}
\min_{\substack{{\lambda}^S \in \R^{N_S}_+,\ {\lambda}^K \in \R^{N_K}_+ \\ a,b \in \R}} \left \lbrack f\big(u(\lambda^S,\lambda^K,a,b)\big)+ \sum^{N_S}_{j=1} {\lambda}^S_j+ \sum^{N_K}_{j=1} {\lambda}^K_j \right \rbrack
\end{align}
subject to
\begin{align} \label{def:paramu}
u(\lambda^S,\lambda^K,a,b)= (1/\alpha)\sum^{N_S}_{j=1} \bar{\sigma}^S_j {\lambda}^S_j S_{\bar{x}^S_j}+ (1/\beta)\sum^{N_K}_{j=1} \bar{\sigma}^K_j {\lambda}^K_j K_{\bar{x}^K_j}+ {a}x+{b}.
\end{align}
Moreover there holds
\begin{align*}
\bar{w}= (1/\alpha)\sum^{N_K}_{j=1} \bar{\sigma}^K_j \bar{\lambda}^K_j S_{\bar{x}^K_j}+ \bar{a}, \text{ and }\TGV(\bar{u})= \sum^{N_S}_{j=1} \bar{\lambda}^S_j+ \sum^{N_K}_{j=1} \bar{\lambda}^K_j.
\end{align*} 
\end{theorem}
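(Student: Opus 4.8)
The plan is to read off the piecewise‑affine structure of $\bar u$ directly from the support conditions of Theorem~\ref{thm:subdiff}, and then to obtain minimality of the finite‑dimensional parameters from global optimality of $\bar u$ together with the infimal‑convolution upper bound of Lemma~\ref{lem:tgvinfconv}. To begin, since $\bar u$ solves \eqref{def:minprob} and $f$ is Fr\'echet differentiable we have $-\nabla f(\bar u)\in\partial_2\TGV(\bar u)$, so by Theorem~\ref{thm:necessaryopt} (i.e.\ Theorem~\ref{thm:subdiff}) the functions $\bar p,\bar P$ of \eqref{def:pandPopt} satisfy \eqref{eq:boundonpandP} and the support conditions \eqref{eq:suppcondition}; under the finiteness hypothesis \eqref{def:finitemax} these read $\supp(D\bar u-\bar w)_\pm\subset\{\bar{x}^S_j\}_{j=1}^{N_S}$ and $\supp(D\bar w)_\pm\subset\{\bar{x}^K_j\}_{j=1}^{N_K}$.

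Since a nonnegative Radon measure supported on a finite set is a nonnegative combination of Diracs at those points, applying this to the Jordan components of the finite‑mass measures $D\bar u-\bar w$ and $D\bar w$ gives
\[ D\bar u-\bar w=\sum_{j=1}^{N_S}\rho_j\,\delta_{\bar{x}^S_j},\qquad D\bar w=\sum_{j=1}^{N_K}\tau_j\,\delta_{\bar{x}^K_j},\qquad \rho_j,\tau_j\in\R. \]
In particular $D\bar u-\bar w$ has no absolutely continuous part, so $\bar w=D^a\bar u$ a.e.\ and $D^s\bar u=\sum_j\rho_j\delta_{\bar{x}^S_j}$; hence $\bar w$ is piecewise constant and $\bar u$ is piecewise affine. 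The inclusions $\supp(D\bar u-\bar w)_+\subset\{x:\bar p(x)=\alpha\}$ and $\supp(D\bar u-\bar w)_-\subset\{x:\bar p(x)=-\alpha\}$ force $\sign(\rho_j)=\bar p(\bar{x}^S_j)/\alpha=:\bar\sigma^S_j$ whenever $\rho_j\neq0$, so with $\bar\lambda^S_j:=\alpha|\rho_j|\geq0$ we get $\rho_j=\bar\sigma^S_j\bar\lambda^S_j/\alpha$, and analogously $\tau_j=\bar\sigma^K_j\bar\lambda^K_j/\beta$ with $\bar\sigma^K_j=\bar P(\bar{x}^K_j)/\beta$ and $\bar\lambda^K_j:=\beta|\tau_j|\geq0$. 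Consequently $\alpha\|D\bar u-\bar w\|_M=\sum_j\bar\lambda^S_j$ and $\beta\|D\bar w\|_M=\sum_j\bar\lambda^K_j$, hence $\TGV(\bar u)=\sum_j\bar\lambda^S_j+\sum_j\bar\lambda^K_j$.

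Integrating, $\bar w=\bar w(0^+)+(1/\beta)\sum_j\bar\sigma^K_j\bar\lambda^K_j S_{\bar{x}^K_j}$, which is the stated formula for $\bar w$. Integrating once more $D\bar u=\bar w+(D\bar u-\bar w)$, using $\int_0^x S_{x_1}=(x-x_1)^+$ together with $(x-x_1)^+=K_{x_1}(x)$ for $x_1\geq T/2$ and $(x-x_1)^+=K_{x_1}(x)+(x-x_1)$ for $x_1<T/2$, brings $\bar u$ into the form \eqref{def:sparseuintro}, the affine term $\bar a x+\bar b$ absorbing $\bar u(0^+)$, $\bar w(0^+)$ and the linear leftovers coming from the kinks with $\bar{x}^K_j<T/2$; this fixes $\bar a,\bar b$ so that $\bar u=u(\bar\lambda^S,\bar\lambda^K,\bar a,\bar b)$ in the notation \eqref{def:paramu}.

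Finally, for minimality of $(\bar\lambda^S,\bar\lambda^K,\bar a,\bar b)$ in \eqref{eq:subprobopti}: for any feasible $(\lambda^S,\lambda^K,a,b)$ the function $u(\lambda^S,\lambda^K,a,b)$ lies in $\BV(0,T)$, and testing the infimal convolution \eqref{eq:tgvinfconv} with the analogous piecewise constant $w$ gives $\TGV\big(u(\lambda^S,\lambda^K,a,b)\big)\leq\sum_j\lambda^S_j+\sum_j\lambda^K_j$ because $\lambda^S,\lambda^K\geq0$; therefore
\[ f\big(u(\lambda^S,\lambda^K,a,b)\big)+\sum_j\lambda^S_j+\sum_j\lambda^K_j\ \geq\ J\big(u(\lambda^S,\lambda^K,a,b)\big)\ \geq\ J(\bar u)\ =\ f(\bar u)+\sum_j\bar\lambda^S_j+\sum_j\bar\lambda^K_j, \]
the middle inequality being global optimality of $\bar u$ for \eqref{def:minprob} and the last equality the value just computed; since $u(\bar\lambda^S,\bar\lambda^K,\bar a,\bar b)=\bar u$, the right‑hand side is exactly the objective of \eqref{eq:subprobopti} at $(\bar\lambda^S,\bar\lambda^K,\bar a,\bar b)$, so this point is a (global) minimizer. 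I expect the only genuine work to be bookkeeping in the third step — tracking the constant and linear contributions produced on reintegration (the kinks $K_{x_1}$ with $x_1<T/2$ contribute a constant $-1_{(0,T)}$ to the derivative) and checking they are all absorbed into $\bar a x+\bar b$, so that $\bar u$ truly belongs to the parametrized family \eqref{def:paramu}; there is no analytic difficulty once Theorem~\ref{thm:subdiff} is available.
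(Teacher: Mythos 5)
Your proof is correct and follows essentially the same route as the paper's: extract the atomic structure of $D\bar w$ and $D\bar u-\bar w$ from the support conditions of Theorem~\ref{thm:subdiff}, integrate to land in the parametrized family \eqref{def:paramu}, and then sandwich the objective using the upper bound $\TGV\big(u(\lambda^S,\lambda^K,a,b)\big)\le\sum_j\lambda^S_j+\sum_j\lambda^K_j$ together with global optimality of $\bar u$. The only (harmless) deviation is that you obtain $\TGV(\bar u)=\sum_j\bar\lambda^S_j+\sum_j\bar\lambda^K_j$ directly from the assumed attainment of the infimum at $\bar w$, whereas the paper derives it via the dual identity $-(\nabla f(\bar u),\bar u)_{L^2}=\TGV(\bar u)$ from Proposition~\ref{prop:subdiffpre}; your version is, if anything, more direct.
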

\begin{proof}
In virtue of Theorem~\ref{thm:necessaryopt} and~\eqref{eq:suppcondition} we get
\begin{align*}
D\bar{w}=(1/\beta) \sum^{N_K}_{j=1} \bar{\sigma}^K_j \bar{\lambda}^K_j \delta_{\bar{x}^K_j}
\end{align*}
for some~$\bar{\lambda}^K \in \R^{N_K}_+$. Hence, by integration, there holds
\begin{align*}
\bar{w}= (1/\beta)\sum^{N_K}_{j=1} \bar{\sigma}^K_j \bar{\lambda}^K_j S_{\bar{x}^K_j}+\bar{a}
\end{align*}
where~$\bar{a} \in \R$.
Then, again invoking~\eqref{eq:suppcondition}, we conclude
\begin{align*}
D\bar{u}=(1/\alpha)\sum^{N_S}_{j=1} \bar{\sigma}^S_j \bar{\lambda}^S_j \delta_{\bar{x}^K_j}+\bar{w}
\end{align*}
and thus integrating yields
\begin{align*}
\bar{u}= (1/\alpha)\sum^{N_S}_{j=1} \bar{\sigma}^S_j \bar{\lambda}^S_j S_{\bar{x}^S_j}+ (1/\beta)\sum^{N_K}_{i=j} \bar{\sigma}^K_j \bar{\lambda}^K_j K_{\bar{x}^K_j}+ \bar{a}x+\bar{b}
\end{align*}
for some~$\bar{\lambda}^S \in \mathbb{R}^{N_S}_+,\bar{b} \in \R$. Next we prove that
\begin{align} \label{eq:equivofTGV}
\TGV(\bar{u})=\sum^{N_S}_{j=1} \bar{\lambda}^S_j+ \sum^{N_K}_{j=1} \bar{\lambda}^K_j.
\end{align}
We readily verify that
\begin{align*}
\TGV\big(u(\lambda^S,\lambda^K,a,b)\big) \leq \sum^{N_S}_{j=1} {\lambda}^S_j+ \sum^{N_K}_{j=1} {\lambda}^K_j
\end{align*}
due to the convexity and positive one-homogeneity of the~$\TGV$-functional as well as~$\TGV(\pm S_x) \leq 1,~\TGV(\pm K_x) \leq 1$ for every~$x \in (0,T)$. Moreover there holds
\begin{align*}
\sum^{N_S}_{j=1} \bar{\lambda}^S_j+ \sum^{N_K}_{j=1} \bar{\lambda}^K_j=-(\nabla f(\bar{u}),\bar{u})_{L^2} \leq \TGV(\bar{u})
\end{align*}
due to
\begin{align*}
\pm \bar{p}(x)=\mp(\nabla f(\bar{u}), S_x)_{L^2},\quad \pm \bar{P}(x)=\mp(\nabla f(\bar{u}), K_x)_{L^2}
\end{align*}
for every~$x\in(0,T)$ as well as Proposition~\ref{prop:subdiffpre}. This proves~\eqref{eq:equivofTGV}.
Finally, combining the previous observations, we arrive at
\begin{align*}
f(\bar{u})+ \sum^{N_S}_{j=1} \bar{\lambda}^S_j+ \sum^{N_K}_{j=1} \bar{\lambda}^K_j= J(\bar{u}) &\leq J\big(u(\lambda^S,\lambda^K,a,b)\big) \\ &\leq f\big(u(\lambda^S,\lambda^K,a,b)\big)+ \sum^{N_S}_{j=1} {\lambda}^S_j+ \sum^{N_K}_{j=1} {\lambda}^K_j
\end{align*}
As a consequence,~$(\bar{\lambda}^S,\bar{\lambda}^K,\bar{a},\bar{b})$ is a minimizer of~\eqref{eq:subprobopti}.
\end{proof}
In particular, this result is applicable if the dual variables are analytic.
\begin{coroll} \label{coroll:analytic}
Let~$\bar{u} \in \BV(0,T)$ be a minimizer to~\eqref{def:minprob}. Moreover assume that the dual variables~$\bar{p},\bar{P}$, see~\eqref{def:pandPopt}, are analytic. Then~$\bar{u}$ is of the form~\eqref{def:sparseuintro}.
\end{coroll}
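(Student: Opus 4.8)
The plan is to derive this as a direct consequence of Theorem~\ref{thm:finitextremasets}, so the only real work is verifying that the hypothesis~\eqref{def:finitemax} is automatically satisfied when~$\bar p$ and~$\bar P$ are analytic. The key observation is that an analytic function on the compact interval~$[0,T]$ which is not identically constant can attain any given value only finitely many times: if it had infinitely many solutions to~$\bar p(x) = \pm\alpha$, these would accumulate at some point of~$[0,T]$, and by the identity theorem for analytic functions~$\bar p$ would be constant, hence equal to~$\pm\alpha$ everywhere. But we know from Theorem~\ref{thm:necessaryopt} that~$\bar p \in H^1_0(0,T)$, so~$\bar p(0) = 0 \neq \pm\alpha$ (as~$\alpha > 0$), ruling out the constant case. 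The same argument applies to~$\bar P$, which satisfies~$\bar P(0) = 0 \neq \pm\beta$.

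First I would argue that the level sets~$\{x \in (0,T) \mid \bar p(x) = \alpha\}$ and~$\{x \in (0,T) \mid \bar p(x) = -\alpha\}$ are each finite. Since~$\bar p$ is analytic on~$(0,T)$ and, by Theorem~\ref{thm:necessaryopt}, extends continuously to~$[0,T]$ with~$\bar p(0) = \bar p(T) = 0$, it suffices to treat the restriction to a slightly larger open set or simply to note that any infinite subset of the compact set~$[0,T]$ has an accumulation point~$x_\ast \in [0,T]$. If this accumulation point lies in the open interval, the identity theorem forces~$\bar p \equiv \alpha$ on~$(0,T)$, contradicting~$\bar p(0) = 0$. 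If the accumulation point is a boundary point, continuity gives~$\bar p(x_\ast) = \alpha \neq 0$, again a contradiction. Hence both level sets are finite, and their union~$\{x \in (0,T) \mid \bar p(x) = \pm\alpha\}$ is a finite set~$\{\bar x^S_j\}_{j=1}^{N_S}$ for some~$N_S \geq 0$. The identical reasoning applied to~$\bar P$, which is analytic and vanishes at the endpoints by~\eqref{eq:boundonpandP}, gives that~$\{x \in (0,T) \mid \bar P(x) = \pm\beta\}$ is a finite set~$\{\bar x^K_j\}_{j=1}^{N_K}$.

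With~\eqref{def:finitemax} established, Theorem~\ref{thm:finitextremasets} applies verbatim and yields that~$\bar u$ has the form~\eqref{def:sparseuintro}, completing the proof. I do not expect any serious obstacle here: the whole argument rests on the elementary fact that a nonconstant analytic function has discrete level sets, combined with the boundary vanishing of~$\bar p$ and~$\bar P$ already recorded in Theorem~\ref{thm:necessaryopt}. The only mild subtlety worth stating carefully is the behaviour at the endpoints of~$[0,T]$ — handled by continuity of~$\bar p, \bar P$ up to the boundary together with their homogeneous Dirichlet conditions — so that accumulation of level-set points cannot sneak in at~$0$ or~$T$.
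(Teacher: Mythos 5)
Your proposal is correct and follows essentially the same route as the paper: both reduce the claim to verifying the finiteness hypothesis~\eqref{def:finitemax} of Theorem~\ref{thm:finitextremasets} by noting that $\bar p \mp \alpha$ and $\bar P \mp \beta$ are analytic and non-constant (the paper infers non-constancy from $\bar p,\bar P \in C_0(0,T)$, you from $\bar p(0)=\bar P(0)=0$, which is the same fact), so their zero sets have no accumulation point in the compact interval and are therefore finite. Your treatment of possible accumulation at the endpoints via the homogeneous boundary values is a detail the paper leaves implicit, but the argument is identical in substance.
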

\begin{proof}
Since the dual variables are nonzero and~$\bar{p}, \bar{P} \in C_0(0,T)$, we conclude that the functions~$\tilde{p}= \bar{p}-\alpha$ and~$\tilde{P}= \bar{P}-\beta$ are analytic and non-constant. Thus, the zero sets of~$\tilde{p}$ and~$\tilde{P}$  cannot have any accumulation points, so they are finite. As a consequence, Theorem~\ref{thm:finitextremasets} is applicable and the statement follows.  
\end{proof}
For the second setting, assume that the smooth part~$f$ of the objective functional is of composite form, i.e.,~$f(u)=F(Hu)$ where~$F$ is a convex function and~$H$ is a linear and continuous operator with finite range. In this setting, the following theorem states the existence of a sparse minimizer~\textit{without} further assumptions on the dual variables.
\begin{theorem}
\label{thm:convexrepresenter}
Let~$f(u)=F(Hu)$ be given where~$H \colon L^2(0,T) \to Y$ is a linear and continuous operator,~$Y\simeq \R^N$ is a Hilbert space , and~$F \colon Y \to \R$ is convex and continuously Fr\'echet-differentiable. Moreover assume that~$F$ is norm-coercive on~$Y$, i.e.,
\begin{align*}
\|y\|_Y \rightarrow +\infty \Rightarrow F(y) \rightarrow +\infty
\end{align*}
and~$H(L^2(0,T))=Y$. Define the subspace
\begin{align*}
H (\mathcal{L}) \coloneqq \left \{\,Hu\;|\;u \in \mathcal{L}\,\right\}.
\end{align*}
Then Problem~\eqref{def:minprob} admits a solution~$\bar{u}$ of the form \eqref{def:sparseuintro} with~$N_k+N_S \leq N-\operatorname{dim}(H (\mathcal{L}))$ and
\begin{align*}
\TGV(\bar{u})=\sum^{N_S}_{j=1} \bar{\lambda}^S_j+ \sum^{N_K}_{j=1} \bar{\lambda}^K_j.
\end{align*}
\end{theorem}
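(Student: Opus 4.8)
The plan is to combine the existence of a minimizer with a convex representer theorem applied in the quotient space $\BV(0,T)/\Lc$, where $\TGV$ is a genuine norm. First I would establish that \eqref{def:minprob} admits a minimizer: since $F$ is norm-coercive on $Y$ and $\TGV$ is coercive on $\BV(0,T)/\Lc$ (by Proposition~\ref{prop:quotientstuff}, it is an equivalent norm there, hence its sublevel sets are weak* compact), one obtains boundedness of minimizing sequences up to shifts in $\Lc$; on the finite-dimensional complement $H(\Lc)$ one uses coercivity of $F$ together with $H(L^2) = Y$. Passing to weak* limits and using lower semicontinuity of $\TGV$ together with continuity of $F \circ H$ (note $H$ has finite range, so it is weak*-to-norm continuous on $\BV/\Lc$), one gets a minimizer $\bar u$. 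Actually, since the dependence of $f$ on $u$ is only through the $N$-dimensional quantity $H\bar u$, the problem reduces to minimizing $F(y) + \TGV$-type penalty over $y$ in an affine subspace, which is the setting of the abstract representer theorems.

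The core step is to invoke \cite[Theorem~3.3]{BreCar20} (or the analogous statement in \cite{Boy}) for the problem of minimizing $G(u+\Lc) + \TGV(u+\Lc)$ over $\BV(0,T)/\Lc$, where $G(u+\Lc) := F(H u)$ is well-defined precisely because $F$ is constant on the cosets only if $H\Lc$ is accounted for --- more carefully, one should work with the data misfit as a function on $Y/H(\Lc)$ or simply note that the representer theorem operates on the level set geometry and does not require $G$ to descend to the quotient, only that the regularizer is a norm whose ball is the closed convex hull of its extreme points. Since $\mathcal B$ is weak* compact (Proposition~\ref{prop:quotientstuff}) and we have identified $\ext \mathcal B$ explicitly in Theorem~\ref{thm:tgvextremals}, the representer theorem yields a minimizer $\bar u + \Lc$ that is a conic combination of at most $\dim(\text{range of the misfit}) = N - \dim H(\Lc)$ extreme points of $\mathcal B$, plus the fact that $G$ depends on $\bar u$ only through the $N$-dimensional observation $H\bar u$ modulo the $\dim H(\Lc)$ directions that $\Lc$ already covers for free. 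Unpacking the extreme points via Theorem~\ref{thm:tgvextremals}, each summand is $\pm S_{x_j}/\alpha$ or $\pm K_{x_j}/\beta$, which after adding a representative in $\Lc$ gives exactly the form \eqref{def:sparseuintro} with $N_S + N_K \le N - \dim H(\Lc)$.

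Finally I would verify the equality $\TGV(\bar u) = \sum_j \bar\lambda^S_j + \sum_j \bar\lambda^K_j$. The inequality ``$\le$'' is immediate from convexity and positive one-homogeneity of $\TGV$ together with $\TGV(\pm S_x) \le 1$ and $\TGV(\pm K_x) \le 1$ (the latter needs $\operatorname{dist}(x,\partial(0,T)) > \beta/\alpha$, guaranteed since $x_j^K$ ranges over $(\beta/\alpha, T - \beta/\alpha)$ by the definition of $\mathcal K$). For ``$\ge$'', one argues as in the proof of Theorem~\ref{thm:finitextremasets}: from optimality $-\nabla f(\bar u) \in \partial_2 \TGV(\bar u)$, so by Proposition~\ref{prop:subdiffpre} one has $(\nabla f(\bar u), \bar u)_{L^2} = -\TGV(\bar u)$, and pairing $\nabla f(\bar u)$ against the explicit decomposition of $\bar u$ while using $(\nabla f(\bar u), \ell)_{L^2} = 0$ for $\ell \in \Lc$ gives $\TGV(\bar u) = \sum_j \bar\lambda^S_j \langle \ldots \rangle + \cdots \ge \sum_j \bar\lambda^S_j + \sum_j \bar\lambda^K_j$ because the dual bounds $|\bar p(x_j^S)| = \alpha$, $|\bar P(x_j^K)| = \beta$ hold at the active points. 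The main obstacle I anticipate is the careful bookkeeping of the dimension count: one must ensure that the $\dim H(\Lc)$ subtraction is legitimate, i.e., that the affine part $\bar a x + \bar b$ genuinely lives in a subspace of dimension $2$ whose image under $H$ can be absorbed without consuming extreme points, which requires matching the hypotheses of the cited representer theorem to the quotient formulation precisely --- in particular making sure the misfit, viewed as a function of the observation, is coercive on the relevant quotient of $Y$.
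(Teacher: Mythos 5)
Your core argument is the same as the paper's: the proof in the manuscript consists of a single application of the convex representer theorem \cite[Theorem 1]{BreCar20} to Problem~\eqref{def:minprob}, whose conclusion already packages together (i) the existence of a minimizer, (ii) the representation $\bar u=\bar\psi+\sum_{j=1}^{\bar N}\bar\lambda_j\bar u_j$ with $\bar u_j+\Lc\in\ext\mathcal B$ and $\bar N\le\dim\big(Y/H(\Lc)\big)$, \emph{and} (iii) the equality $\TGV(\bar u)=\sum_j\bar\lambda_j$; the statement of the theorem then follows by unpacking $\ext\mathcal B$ via Theorem~\ref{thm:tgvextremals}, exactly as you do. Your preliminary discussion of existence and of how the misfit interacts with the quotient by $\Lc$ is harmless bookkeeping that the cited theorem absorbs.

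The one genuine gap is in your separate verification of $\TGV(\bar u)=\sum_j\bar\lambda^S_j+\sum_j\bar\lambda^K_j$, specifically the direction ``$\ge$''. From $-\nabla f(\bar u)\in\partial_2\TGV(\bar u)$ and Proposition~\ref{prop:subdiffpre} you obtain $\TGV(\bar u)=-(\nabla f(\bar u),\bar u)_{L^2}=\sum_j\lambda_j c_j$ with $c_j=-(\nabla f(\bar u),\hat u_j)_{L^2}\le 1$ for each atom $\hat u_j\in\ext B$; this only reproduces the inequality $\TGV(\bar u)\le\sum_j\lambda_j$ that you already have from sublinearity. To conclude ``$\ge$'' you need $c_j=1$ for every active atom, i.e., that $|\bar p(\bar x^S_j)|=\alpha$ and $|\bar P(\bar x^K_j)|=\beta$ with the correct signs at the particular nodes of \emph{this} representation. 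That is not a consequence of $\bar u$ having the form \eqref{def:sparseuintro} together with optimality of $\bar u$: Remark~\ref{rem:counterexample} exhibits a conic combination of extremal points whose $\TGV$ is strictly smaller than the sum of its weights, so the equality is a property of the specific decomposition selected by the representer theorem (the chosen extreme points lie on the face of the $\TGV$-ball exposed by the dual certificate), not of an arbitrary sparse decomposition of the minimizer. The route you sketch for establishing the contact conditions is the one used in Theorem~\ref{thm:finitextremasets}, but there it relies on the additional hypothesis \eqref{def:finitemax} that the dual variables have finitely many global extrema, which is not assumed here. The fix is simply to take the equality $\TGV(\bar u)=\sum_j\bar\lambda_j$ directly from the conclusion of \cite[Theorem 1]{BreCar20}, as the paper does, rather than re-deriving it.
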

\begin{proof}
Given the stated assumptions, it is readily verified that~\cite[Theorem 1]{BreCar20} can be applied to Problem~\ref{def:minprob}. This yields the existence of at least one minimizer~$\bar{u}$ to~\eqref{def:minprob} with 
\begin{align*}
\bar{u}= \bar{\psi}+ \sum^{\bar{N}}_{j=1} \bar{\lambda}_j \bar{u}_j,~\TGV(\bar{u})=\sum^{\bar{N}}_{j=1}  \bar{\lambda}_j
\end{align*}
for some~$\bar{\psi} \in \mathcal{L}$,~$\bar{\lambda}_j>0$,~$\bar{u}_j \in \BV(0,T)$ with~$\bar{u}_j +\mathcal{L} \in \operatorname{Ext} \mathcal{B}$ and~$\bar{N} \leq \operatorname{dim}( Y / H(\mathcal{L}))$. The claim then follows by the definition of~$\mathcal{L}$, the characterization of~$\operatorname{Ext} \mathcal{B}$ in Theorem~\ref{thm:tgvextremals} and~$\operatorname{dim}( Y / H(\mathcal{L}))=\operatorname{dim}( Y)-\operatorname{dim}( H(\mathcal{L}))$.
\end{proof}
\begin{remark}
It is worth pointing out that the results of Theorem~\ref{thm:convexrepresenter} remain valid for proper, convex and lower semicontinuous functionals~$F \colon Y \to \R$.
\end{remark}
\begin{remark} \label{rem:counterexample}
Of course, Theorem~\ref{thm:finitextremasets} and~\ref{thm:convexrepresenter} raise the question whether there holds 
\begin{align*}
\TGV(u)= \sum^{N_K}_{j=1} \lambda^K_j+\sum^{N_S}_{j=1} \lambda^S_j
\end{align*}
for every~$u\in \BV(0,T)$ of the form~\eqref{def:paramu}. This is~\textit{not} the case as we will see in the following. For this purpose let~$\alpha,\beta>0$,~$T>0$ as well as~$\bar{x}_1,\bar{x}_2 \in [T/2,T-\beta/\alpha)$,~$\bar{x}_2>\bar{x}_1$, be such that
\begin{align*}
|\bar{x}_1-\bar{x}_2| < \frac{\beta}{2\alpha}.
\end{align*}
Moreover, fix coefficients~$\bar{\lambda}_1,\bar{\lambda}_2>0$ with
\begin{align*}
|\bar{\lambda}_1-\bar{\lambda}_2|<\frac{\beta}{2\alpha},~|T-\bar{x}_2| < \bar{\lambda}_1+\bar{\lambda}_2.
\end{align*}
Set~$\bar{u}= \bar{\lambda}_1 K_{\bar{x}_1}/\beta-\bar{\lambda}_2 K_{\bar{x}_2}/\beta$. Then we readily calculate
\begin{align*}
\TGV(\bar{u}) \leq \frac{\alpha}{\beta} \|\bar{\lambda}_1 S_{\bar{x}_1}-\bar{\lambda}_2 S_{\bar{x}_2}\|_{L^1}&=\frac{\alpha}{\beta}\left(\bar{\lambda}_1 |\bar{x}_2-\bar{x}_1|+ |\bar{\lambda}_2-\bar{\lambda}_1||T -\bar{x}_2|\right)
\\& < \bar{\lambda}_1/2+ |T-\bar{x}_2|/2 \\
&< \bar{\lambda}_1+\bar{\lambda}_2
\end{align*}
which gives a counterexample.
\end{remark}
\section{A minimization algorithm} \label{sec:algorithm}
This section is devoted to the analysis of a simple solution algorithm for Problem~\eqref{def:minprob}. In particular, the presented procedure does not rely on a discretization of the underlying functions. This is a challenging problem for a variety of reasons. First, it requires to work directly on the non-reflexive space~$\BV(0,T)$. Second, we also have to take into account the nonsmoothness of the~$\TGV$-functional. Finally, note that evaluating~$\TGV(u)$ already requires the solution of a nonsmooth minimization problem. Our method follows the spirit of \cite{BredWal} and alternates between the update of a set~$\mathcal{A}_k$ in~$\ext B$ via solving a linear minimization problem over~$B$ and improving the iterate~$u_k \in \operatorname{cone}(\mathcal{A}_k)+ \mathcal{L}$. The former can be implemented efficiently using Theorem~\ref{thm:existlinoverext} while the latter is done by minimizing a~$\ell_1$-regularized surrogate functional over the finite dimensional set~$\operatorname{cone}(\mathcal{A}_k)+\mathcal{L}$.  This completely avoids evaluating the~$\TGV$-functional throughout the iterations and eventually guarantees the subsequential convergence of~$\{u_k\}_k$ towards stationary points of~\eqref{def:minprob}.\subsection{The procedure} \label{subsec:proc}
Given a finite ordered set~$ \mathcal{A}=\{u_j\}^N_{j=1}$ in~$\ext B$ consider the following finite dimensional minimization problem:
\begin{align} \label{def:subprob}
\min_{\lambda \in \mathbb{R}^N_+, \ell \in \mathcal{L} } \left \lbrack f\left(\sum^N_{j=1} \lambda_j u_j+ \ell\right)+ \sum^N_{j=1} \lambda_j \right \rbrack. \tag{$\mathcal{P}(\mathcal{A})$}
\end{align}
Note that the objective functional in~\eqref{def:subprob} constitutes an upper bound on~$J$ for every function~$u \in \operatorname{cone}(\mathcal{A})+\mathcal{L}$.
\begin{lemma}
Let~$(\lambda,\ell) \in \R^N_+ \times \mathcal{L}$ be arbitrary and set 
\begin{align*}
u= \sum^N_{j=1} \lambda_j u_j+\ell \in \operatorname{cone}(\mathcal{A})+ \mathcal{L}.
\end{align*}
Then there holds
\begin{align*}
J(u) \leq f(u)+ \sum^N_{j=1} \lambda_j.
\end{align*}
\end{lemma}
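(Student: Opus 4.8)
The statement is an immediate consequence of the definition of $\operatorname{TGV}$ as an infimal convolution, together with its positive one-homogeneity and convexity. The plan is to exhibit, for the given $u = \sum_{j=1}^N \lambda_j u_j + \ell$, a specific competitor in the infimal convolution formulation \eqref{eq:tgvinquotient} whose energy is bounded by $\sum_{j=1}^N \lambda_j$, and then add $f(u)$ to both sides.

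\textbf{Key steps.} First I would recall that by Theorem~\ref{thm:tgvextremals}, each $u_j \in \ext B$ is of the form $\pm S_{x_j}/\alpha + \mathcal{L}$ or $\pm K_{x_j}/\beta + \mathcal{L}$, and in either case $\operatorname{TGV}(u_j) \le 1$: for the jump functions, testing \eqref{eq:tgvinfdef} with $w \equiv 0$ gives $\operatorname{TGV}(S_x/\alpha) \le \alpha \|D S_x/\alpha\|_M = \alpha \cdot (1/\alpha) = 1$; for the kink functions $K_x/\beta$ with $\operatorname{dist}(x,\partial(0,T)) > \beta/\alpha$, Proposition~\ref{prop:tgvpwlinear} gives $\operatorname{TGV}(K_x/\beta) = \beta \|D^2 K_x/\beta\|_M = 1$. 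Second, since $\operatorname{TGV}$ is a seminorm on $\BV(0,T)$ (it is the composition of the infimal convolution of two seminorms, or simply a norm on the quotient by Proposition~\ref{prop:quotientstuff}), it is subadditive and positively one-homogeneous, and invariant under addition of elements of $\mathcal{L}$; hence
\begin{align*}
\operatorname{TGV}(u) = \operatorname{TGV}\!\left(\sum_{j=1}^N \lambda_j u_j + \ell\right) \le \sum_{j=1}^N \lambda_j \operatorname{TGV}(u_j) \le \sum_{j=1}^N \lambda_j,
\end{align*}
using $\lambda_j \ge 0$ in the first inequality. Finally, adding $f(u)$ to both sides gives $J(u) = f(u) + \operatorname{TGV}(u) \le f(u) + \sum_{j=1}^N \lambda_j$, which is the claim.

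\textbf{Main obstacle.} There is essentially no obstacle: the only thing to be careful about is to justify that $\operatorname{TGV}$ really is subadditive and one-homogeneous as a functional on $\BV(0,T)$ (not merely on the quotient). This follows directly from the supremum definition \eqref{eq:tgvsupdef}, since a pointwise supremum of linear functionals is automatically convex and positively one-homogeneous, and subadditivity is then a consequence of convexity together with one-homogeneity. The rest is bookkeeping with the bounds $\operatorname{TGV}(\pm S_x/\alpha) \le 1$ and $\operatorname{TGV}(\pm K_x/\beta) \le 1$ already recorded above.
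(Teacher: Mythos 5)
Your proof is correct and follows essentially the same route as the paper: the paper likewise invokes convexity and positive one-homogeneity of $\TGV$ to get sublinearity, bounds $\TGV(u)\le\sum_j\lambda_j\TGV(u_j)=\sum_j\lambda_j$ using $\TGV(u_j)=1$ and $\lambda_j\ge0$, and adds $f(u)$. Your extra verifications that $\TGV(\pm S_x/\alpha)\le1$ and $\TGV(\pm K_x/\beta)\le1$ are a harmless elaboration of what the paper takes for granted from the characterization of $\ext B$.
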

\begin{proof}
Since the~$\TGV$-functional is convex and one-homogeneous it is also sublinear. Hence there holds
\begin{align} \label{eq:upperforTGV}
\TGV(u) \leq \sum^N_{j=1} \lambda_j \TGV(u_j)= \sum^N_{j=1} \lambda_j
\end{align}
using~$\TGV(u_j)=1$ as well as componentwise nonnegativity of~$\lambda$. The claim readily follows.
\end{proof}
Now we propose a greedy method for the computation of a stationary point of~\eqref{def:minprob} which works as follows: Let the current~\textit{active set}~$\mathcal{A}_k=\{u^k_j\}^{N_k}_{j=1}$,~$N_k \geq 0$, in~$\ext B$ as well as the iterate~$u_k$ be given. Assume that~$u_k$ is constituted by a minimizing pair~$(\lambda^k, \ell^k)$ to~$(\mathcal{P}(\mathcal{A}_k))$ as
\begin{align} \label{def:constiterate}
u_k= \sum^{N_k}_{j=1} \lambda^k_j u^k_j+ \ell_k.
\end{align}
Then, we first compute the current dual variables defined by
\begin{align} \label{eq:iterduals}
p_k= \int^{\boldsymbol{\cdot}}_0 \nabla f(u_k)(s)~\mathrm{d}s,~P_k= -\int^{\boldsymbol{\cdot}}_0 p_k(s)~\mathrm{d}s \in C_0(0,T)
\end{align}
as well as points~$\hat{x}^S_k, \hat{x}^K_k \in (0,T)$ with
\begin{align*}
|p_k(\hat{x}^S_k)|=\|p_k\|_C,~|P_k(\hat{x}^K_k)|=\|P_k\|_C.
\end{align*} 
If~$\max\{\|p_k\|_C/\alpha, \|P_k\|_C/\beta\} \leq 1$ the method stops with~$\bar{u}=u_k$ being a stationary point for~\eqref{def:minprob}.
\begin{prop} \label{prop:finitestep}
Let~$(\lambda^k, \ell^k)\in \mathbb{R}^N_+ \times \mathcal{L}$ denote a minimizer of~$(\mathcal{P}(\mathcal{A}_k))$ and set
\begin{align*}
u_k=\sum^{N_k}_{j=1} {\lambda}^k_j u^k_j+ \ell^k.
\end{align*}
Define the dual variables~$p_k$ and~$P_k$ as in~\eqref{eq:iterduals}. Then these satisfy $p_k \in H^1_0(0,T)$ and~$P_k \in H^2(0,T) \cap H^1_0(0,T)$, and if~$\max\{\|p_k\|_C/\alpha,\|P_k\|_C/\beta\}\leq 1$ then~$u_k$ is a stationary point of~\eqref{def:minprob} and~$\TGV(u_k)= \sum^{N_k}_{j=1} \lambda^k_j$.
\end{prop}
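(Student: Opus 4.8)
The plan is to read off the first-order conditions for the finite-dimensional smooth problem $(\mathcal{P}(\mathcal{A}_k))$ and feed them, together with the norm-bound hypothesis, into the characterization of $\partial_2\TGV$ from Proposition~\ref{prop:subdiffpre}. First I would record that, since the objective $(\lambda,\ell)\mapsto f\big(\sum_j\lambda_j u^k_j+\ell\big)+\sum_j\lambda_j$ is Fréchet differentiable (the affine map $(\lambda,\ell)\mapsto\sum_j\lambda_j u^k_j+\ell$ being continuous into $\BV(0,T)\hookrightarrow L^2(0,T)$), minimality of $(\lambda^k,\ell^k)$ over the polyhedral set $\R^{N_k}_+\times\Lc$ forces
\[(\nabla f(u_k),\ell)_{L^2}=0\ \ \text{for all }\ell\in\Lc,\qquad \lambda^k_j\big[(\nabla f(u_k),u^k_j)_{L^2}+1\big]=0\ \ \text{for }j=1,\dots,N_k;\]
the first because $\Lc$ is a subspace (so $\pm\ell$ are admissible directions and $\sum_j\lambda_j$ is independent of $\ell$), the second by complementary slackness for $\lambda\ge0$ (with $+e_j$ always admissible and $-e_j$ admissible whenever $\lambda^k_j>0$).

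For the regularity claim, by construction $p_k\in H^1(0,T)$ with $p_k(0)=0$ and $P_k\in H^2(0,T)$ with $P_k(0)=0$, so only vanishing at $x=T$ remains. I would obtain this from Lemma~\ref{lem:orthogonality} with $\bar\varphi=-\nabla f(u_k)$, for which the functions in~\eqref{def:pandP} coincide with the $p_k,P_k$ of~\eqref{eq:iterduals}: the orthogonality $(\nabla f(u_k),\ell)_{L^2}=0$ on $\Lc$ just obtained is exactly equivalent to $p_k\in H^1_0(0,T)$ and $P_k\in H^2(0,T)\cap H^1_0(0,T)$.

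Assuming now $\max\{\|p_k\|_C/\alpha,\|P_k\|_C/\beta\}\le1$, I would check the three conditions of Proposition~\ref{prop:subdiffpre} for $\bar\varphi=-\nabla f(u_k)$ at $\bar u=u_k$. Orthogonality to $\Lc$ is already in hand. The bound $(\bar\varphi,v)_{L^2}\le\TGV(v)$ for all $v\in\BV(0,T)$ follows from the implication $3.\Rightarrow1.$ of Lemma~\ref{lem:normboundsonpandP}, whose hypotheses ($\bar\varphi$ orthogonal to $\Lc$ and $\|p_k\|_C\le\alpha$, $\|P_k\|_C\le\beta$) are precisely the standing assumptions. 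For the remaining identity $(\bar\varphi,u_k)_{L^2}=\TGV(u_k)$: sublinearity of $\TGV$ gives $\TGV(u_k)\le\sum_j\lambda^k_j\TGV(u^k_j)=\sum_j\lambda^k_j$ as in~\eqref{eq:upperforTGV}, while summing the complementary slackness relations and using $u_k=\sum_j\lambda^k_j u^k_j+\ell^k$ together with $(\nabla f(u_k),\ell^k)_{L^2}=0$ yields $(-\nabla f(u_k),u_k)_{L^2}=\sum_j\lambda^k_j$; testing the previous bound with $v=u_k$ then gives $\sum_j\lambda^k_j\le\TGV(u_k)$. Hence $\TGV(u_k)=\sum_j\lambda^k_j=(-\nabla f(u_k),u_k)_{L^2}$, which supplies both the missing identity and the announced value of $\TGV(u_k)$, and Proposition~\ref{prop:subdiffpre} then gives $-\nabla f(u_k)\in\partial_2\TGV(u_k)$, i.e.,~$u_k$ is a stationary point of~\eqref{def:minprob}.

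I do not expect a real obstacle: the proof is an assembly of results proved earlier. The points that need care are the sign bookkeeping (so that $p_k,P_k$ from~\eqref{eq:iterduals} genuinely play the roles of $\bar p,\bar P$ in Lemmas~\ref{lem:orthogonality} and~\ref{lem:normboundsonpandP} under $\bar\varphi=-\nabla f(u_k)$) and the precise first-order conditions for the \emph{constrained} problem $(\mathcal{P}(\mathcal{A}_k))$, in particular the complementarity for the nonnegativity constraints; existence of a minimizing pair $(\lambda^k,\ell^k)$ is assumed in the statement and need not be addressed.
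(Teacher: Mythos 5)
Your proposal is correct and follows essentially the same route as the paper: derive the first-order conditions of $(\mathcal{P}(\mathcal{A}_k))$ (orthogonality to $\Lc$ plus complementarity, i.e.~\eqref{eq:firstordersubprob1}--\eqref{eq:firstordersubprob2}), apply Lemma~\ref{lem:orthogonality} for the regularity of $p_k,P_k$, Lemma~\ref{lem:normboundsonpandP} for the global bound, and squeeze $\sum_j\lambda^k_j=-(\nabla f(u_k),u_k)_{L^2}\le\TGV(u_k)\le\sum_j\lambda^k_j$ before invoking Proposition~\ref{prop:subdiffpre}. Your write-up is in fact slightly more explicit than the paper's about how the constrained first-order conditions (admissible directions and complementary slackness) are obtained.
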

\begin{proof}
The necessary first-order optimality conditions for Problem~$(\mathcal{P}(\mathcal{A}_k))$ are given by
\begin{align} \label{eq:firstordersubprob1}
(\nabla f(u_k),\ell)_{L^2}= 0\  \text{for all}~\ell \in \mathcal{L},\quad-(\nabla f(u_k),u^k_j)_{L^2} \leq 1 \ \text{for all}~j=1,\dots,N_k\end{align}
implying~$p_k \in H^1_0(0,T)$ and~$P_k \in H^2(0,T) \cap H^1_0(0,T)$ by Lemma~\ref{lem:orthogonality}, as well as
\begin{align} \label{eq:firstordersubprob2}
-(\nabla f(u_k),u_k)_{L^2}=\sum^{N_k}_{j=1} \lambda^k_j.
\end{align}
Moreover, if
\begin{align*}
\max\{\|p_k\|_C/\alpha,\|P_k\|_C/\beta\}\leq 1,
\end{align*}
then we have 
\begin{align*}
-(\nabla f(u_k),u) \leq \TGV(u) \quad \text{for all}~u \in \BV(0,T)
\end{align*}
according to Lemma~\ref{lem:normboundsonpandP}. Combining these observations with~\eqref{eq:upperforTGV} we conclude
\begin{align*}
\sum^{N_k}_{j=1} \lambda^k_j=-(\nabla f(u_k),u_k)_{L^2} \leq \TGV(u_k) \leq \sum^{N_k}_{j=1} \lambda^k_j
\end{align*}
Thus,~in virtue of Proposition~\ref{prop:subdiffpre},~$u_k$ is a stationary point of~\eqref{def:minprob} and~$\TGV(u_k)=\sum^{N_k}_{j=1}\lambda^k_j$.
\end{proof}
Otherwise, if~$\max\{\|\bar{p}\|_C/\alpha,\|\bar{P}\|_C/\beta\}> 1$, define
\begin{align} \label{def:newgamma}
\widehat{u}_k= \begin{cases} 
(\sign(p_k(\hat{x}^S_k))/\alpha)\, S_{\hat{x}^S_k} & \|p_k\|_C/\alpha \geq \|P_k\|_C/\beta \\
-(\sign(P_k(\hat{x}^K_k)/\beta)\, L_{\hat{x}^K_k} & \text{else}.
\end{cases}
\end{align}
and update the active set by adding~$\widehat{u}_k$ to it. More in detail, set
\begin{align} \label{eq:intermediateactive}
\mathcal{A}^+_k= \mathcal{A}_k \cup \{ \widehat{u}_k\}=\{u^{k,+}_j\}^{N^+_k}_{j=1}
\end{align}
where we denote
\begin{align*}
N^+_k=N_k+1,~u^{k,+}_j=u^k_j~\text{for all}~j=1,\dots,N,~u^{k,+}_{N^+_k}=\widehat{u}_k
\end{align*}
for abbreviation.
The following proposition serves several purposes. First, it shows that this update step is well-posed, i.e.,~$\widehat{u}_k$ is indeed an element of~$\operatorname{Ext}B$. Second, it relates the choice of~$\widehat{u}_k$ to the minimization of a particular linear functional over~$B$.
\begin{prop} \label{prop:maxcertifies}
Let~$\widehat{u}_k$ be defined as in~\eqref{def:newgamma}. Then there holds~$\widehat{u}_k \in \operatorname{Ext}B$ as well as
\begin{align} \label{eq:linearmin}
-\max{\{\|p_k\|_C/\alpha, \|P_k\|_C/\beta\}}=( \nabla f(u_k),\widehat{u}_k)_{L^2}=\min_{v \in B}(\nabla f(u_k) ,v)_{L^2}.
\end{align}
Moreover, there exists~$C_{\widehat{u}}>0$ with~$\|\widehat{u}_k\|_{L^2} \leq C_{\widehat{u}}$ for all~$k \geq 1$.
\end{prop}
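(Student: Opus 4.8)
The plan is to verify the three assertions in turn, working in the setting of the algorithm, so that $u_k$ is built from a minimizer of~$(\mathcal{P}(\mathcal{A}_k))$. By (the proof of) Proposition~\ref{prop:finitestep} this already provides $p_k\in H^1_0(0,T)$, $P_k\in H^2(0,T)\cap H^1_0(0,T)$, the orthogonality $(\nabla f(u_k),\ell)_{L^2}=0$ for all $\ell\in\mathcal{L}$, and in particular $p_k,P_k\in C_0(0,T)$ with vanishing boundary values; since we are in the branch $\max\{\|p_k\|_C/\alpha,\|P_k\|_C/\beta\}>1$, both norms are strictly positive (as $P_k\equiv0$ would force $p_k\equiv0$), so the maxima defining $\hat{x}^S_k$ and $\hat{x}^K_k$ are attained at interior points of $(0,T)$. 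The membership $\widehat{u}_k\in\operatorname{Ext}B=\mathcal{S}\cup\mathcal{K}$ is immediate in the first case of~\eqref{def:newgamma}, where $\widehat{u}_k=\pm S_{\hat{x}^S_k}/\alpha$ with $\hat{x}^S_k\in(0,T)$. In the second case we must check $\hat{x}^K_k\in(\beta/\alpha,T-\beta/\alpha)$, which is the only nontrivial point: using $P_k(0)=P_k(T)=0$ we have
\[ P_k(x)=\int_0^x p_k(s)\dd s=-\int_x^T p_k(s)\dd s\quad\Longrightarrow\quad |P_k(x)|\leq\min(x,\,T-x)\,\|p_k\|_C\qquad(x\in(0,T)), \]
and the defining inequality of the ``else'' branch, $\|P_k\|_C>(\beta/\alpha)\|p_k\|_C$, evaluated at the maximizer $\hat{x}^K_k$ forces $\min(\hat{x}^K_k,T-\hat{x}^K_k)>\beta/\alpha$; en passant this also shows $(\beta/\alpha,T-\beta/\alpha)\neq\emptyset$ whenever the ``else'' branch occurs. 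Hence $\widehat{u}_k\in\mathcal{K}\subset\operatorname{Ext}B$.

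For the chain~\eqref{eq:linearmin}: since $\nabla f(u_k)$ is orthogonal to $\mathcal{L}$, Theorem~\ref{thm:existlinoverext} applies and gives $\min_{v\in B}(\nabla f(u_k),v)_{L^2}=\min_{v\in\operatorname{Ext}B}(\nabla f(u_k),v)_{L^2}$. Integrating by parts as in Lemma~\ref{lem:normboundsonpandP} (using $p_k'=\nabla f(u_k)$, $P_k'=-p_k$ and the vanishing boundary values) yields $(\nabla f(u_k),S_x)_{L^2}=-p_k(x)$ and $(\nabla f(u_k),K_x)_{L^2}=-P_k(x)$ for every $x\in(0,T)$, the latter checked separately on the two pieces of $K_x$. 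Minimizing the signed and rescaled versions over $\mathcal{S}$ and over $\mathcal{K}$ therefore gives
\[ \min_{v\in\operatorname{Ext}B}(\nabla f(u_k),v)_{L^2}=\min\Big\{-\frac{\|p_k\|_C}{\alpha},\ -\frac1\beta\sup_{x\in(\beta/\alpha,\,T-\beta/\alpha)}|P_k(x)|\Big\}. \]
In the first case of~\eqref{def:newgamma} one has $\|p_k\|_C/\alpha\geq\|P_k\|_C/\beta\geq\beta^{-1}\sup_{(\beta/\alpha,T-\beta/\alpha)}|P_k|$, so the minimum equals $-\|p_k\|_C/\alpha$; in the second case the first paragraph gives $\sup_{(\beta/\alpha,T-\beta/\alpha)}|P_k|=|P_k(\hat{x}^K_k)|=\|P_k\|_C$, so the minimum equals $-\|P_k\|_C/\beta$. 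Either way this common value equals $-\max\{\|p_k\|_C/\alpha,\|P_k\|_C/\beta\}$, and the sign in~\eqref{def:newgamma} is exactly the one making $(\nabla f(u_k),\widehat{u}_k)_{L^2}$ equal to it (for instance $(\nabla f(u_k),\widehat{u}_k)_{L^2}=\sign(p_k(\hat{x}^S_k))\,(-p_k(\hat{x}^S_k))/\alpha=-\|p_k\|_C/\alpha$ in the first case). This proves~\eqref{eq:linearmin}.

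For the uniform bound, both extremal families are bounded in $L^2(0,T)$ independently of the base point: $\|S_x\|_{L^2}^2=T-x\leq T$, while a direct computation gives $\|K_x\|_{L^2}^2=(T-x)^3/3$ if $x\geq T/2$ and $x^3/3$ otherwise, so $\|K_x\|_{L^2}\leq T^{3/2}/\sqrt3$; hence $\|\widehat{u}_k\|_{L^2}\leq C_{\widehat{u}}:=\max\{\sqrt{T}/\alpha,\ T^{3/2}/(\sqrt3\,\beta)\}$ for all $k\geq1$. I expect the only step needing real care to be the interior-attainment argument of the first paragraph, and in its wake the verification that the supremum of $|P_k|$ over the \emph{open} interval $(\beta/\alpha,T-\beta/\alpha)$ actually attains $\|P_k\|_C$ in the relevant case — this is what guarantees that the reduction to $\operatorname{Ext}B$, whose $\mathcal{K}$-part is parametrized precisely by that open interval, singles out $\widehat{u}_k$. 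Everything else reduces to the integrations by parts and elementary norm estimates already used in Section~\ref{subsec:subdiff}.
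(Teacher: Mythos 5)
Your proof is correct and follows essentially the same route as the paper: the interior-location argument for $\hat{x}^K_k$ via $|P_k(x)|\leq\min(x,T-x)\,\|p_k\|_C$, the reduction of the linear problem to $\operatorname{Ext}B$ through Theorem~\ref{thm:existlinoverext} and integration by parts, and a direct computation for the $L^2$ bound. The only blemish is the sign slip $P_k(x)=\int_0^x p_k$ (the paper defines $P_k=-\int_0^{\bigcdot}p_k$), which is harmless since your argument only uses $|P_k|$ and the correctly computed pairing $(\nabla f(u_k),\widehat{u}_k)_{L^2}$.
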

\begin{proof}
First, note that~$\pm S_x /\alpha \in \operatorname{Ext} B  $ for all~$x \in (0,T)$. Thus, in order to prove that~$\widehat{u}_k \in \operatorname{Ext}B $, it suffices to show~$\hat{x}^K_k \in (\beta/\alpha, T-\beta/\alpha)$ if~$\|p_k\|_C/\alpha < \|P_k\|_C/\beta$. In this case we readily verify
\begin{align*}
|P_k(x)| \leq  \int^x_0 |p_k(x)|~\mathrm{d} x \leq |x|  \|p_k\|_C < \frac{\alpha |x|}{\beta} \|P_k\|_C
\end{align*}
as well as
\begin{align*}
|P_k(x)| \leq  \int^T_x |p_k(x)|~\mathrm{d} x \leq |T-x| \|p_k\|_C < \frac{\alpha |T-x|}{\beta} \|P_k\|_C
\end{align*}
by noting that
\begin{align*}
P_k(x)= -\int^x_0 p_k(x)~\mathrm{d}x,\quad 0= -\int^T_0 p_k(x)~\mathrm{d}x=P_k(x)-\int^T_{x} p_k(x)~\mathrm{d}x 
\end{align*}
for every~$x \in (0,T)$. Here we made explicit use of~$P_k \in C_0(0,T)$ and~$\|p_k\|_C/\alpha < \|P_k\|_C/\beta$. Consequently we get~$|P_k(x)|< \|P_k\|_C$ for all~$x \in [0,T] \setminus (\beta/\alpha, T-\beta/\alpha) $ and thus ~$\hat{x}^K_k \in (\beta/\alpha, T-\beta/\alpha)$.

Next we prove~\eqref{eq:linearmin}. To start we invoke~\eqref{eq:firstordersubprob1} and Theorem~\ref{thm:existlinoverext} to arrive at
\begin{align*}
\min_{v \in B}(\nabla f(u_k) ,v)_{L^2}&=\min_{v \in \operatorname{Ext}(B)}(\nabla f(u_k) ,v)_{L^2}.
\end{align*}
Moreover, from the characterization of~$\operatorname{Ext}(B)$ in Theorem \ref{thm:tgvextremals}, we have
\begin{align*}
\min_{v \in \operatorname{Ext}(B)}&(\nabla f(u_k) ,v)_{L^2}\\ &= \min \left\{\min_{  \substack{x\in (0,T),\\ \sigma \in \{-1,1\}}} \sigma (\nabla f(u_k) ,S_x)_{L^2}/\alpha,\ \inf_{ \substack{ x\in (\beta/\alpha,T-\beta/\alpha),\\ \sigma \in \{-1,1\}}} \sigma (\nabla f(u_k),K_x)_{L^2}/\beta\right\}.
\end{align*}
By partial integration we now verify
\begin{align*}
\min_{  \substack{x\in (0,T),\\ \sigma \in \{-1,1\}}} \sigma (\nabla f(u_k) ,S_x)_{L^2}/\alpha=\min_{  \substack{x\in (0,T),\\ \sigma \in \{-1,1\}}} -\frac{\sigma p_k(x)}{\alpha} \geq - \frac{\|p_k\|_C}{\alpha}
\end{align*}
as well as
\begin{align*}
\inf_{ \substack{ x\in (\beta/\alpha,T-\beta/\alpha),\\ \sigma \in \{-1,1\}}} \sigma (\nabla f(u_k) ,K_x)_{L^2}/\beta= \inf_{ \substack{ x\in (\beta/\alpha,T-\beta/\alpha),\\ \sigma \in \{-1,1\}}} \frac{\sigma P_k(x)}{\beta} \geq  -\frac{\|P_k\|_C}{\beta}.
\end{align*}
In summary, these observations yield
\begin{align*}
\min_{v \in B}(\nabla f(u_k) ,v)_{L^2} \geq -\max\left\{\|p_k\|_C/\alpha, \|P_k\|_C/\beta \right\}.
\end{align*}
The proof is now finished noting that
\begin{align*}
(\nabla f(u_k) ,\widehat{u}_k)_{L^2}&= \begin{cases} 
-\sign(p_k(\hat{x}^s_k)) \frac{p_k(\hat{x}^s_k)}{ \alpha} & \|p_k\|_C/\alpha \geq \|P_k\|_C/\beta \\
-\sign(P_k(\hat{x}^\ell_k)) \frac{P_k(\hat{x}^\ell_k)}{ \beta} & \|p_k\|_C/\alpha < \|P_k\|_C/\beta \\
\end{cases} \\
&= \begin{cases} 
-\|p_k\|_C & \|p_k\|_C/\alpha \geq \|P_k\|_C/\beta \\
-\|P_k\|_C/\beta & \|p_k\|_C/\alpha < \|P_k\|_C/\beta \\
\end{cases}
\\ &= -\max\left\{\|p_k\|_C/\alpha, \|P_k\|_C/\beta \right\}.
\end{align*}
Finally,~$\|\widehat{u}_k\|_{L^2} \leq C_{\widehat{u}}$ for some~$C_{\widehat{u}}>0$ and all~$k \geq 1$ follows by a direct computation.
\end{proof}
Subsequently, the next iterate is found as
\begin{align} \label{eq:updateit}
u_{k+1}= \sum^{N^+_k}_{j=1} \lambda^{k,+} u^{k,+}_j+ \ell^{k,+}
\end{align}  
for a minimizer~$(\lambda^{k,+}, \ell^{k,+})$ of~$(\mathcal{P}(\mathcal{A}^+_k))$.
Now, the active set is pruned by removing all extreme points which were assigned a zero weight, i.e.,
\begin{align} \label{eq:updateofactive}
\mathcal{A}_{k+1}= \mathcal{A}^+_k \setminus \left\{\,u^{k,+}_j\;\middle|\;\lambda^{k,+}_j=0,~j \in \{1,\dots, N^+_k\}\,\right\}
\end{align}
and the iteration counter~$k$ is incremented by one.
Observe that this ensures the well-posedness of Algorithm~\ref{alg:accgcg}, i.e.,~$u_{k+1}$ is constituted by a minimizing pair~$(\lambda^{k+1},\ell^{k+1})$ in the sense of~\eqref{def:constiterate}.
\begin{prop}
Let~$u_{k+1}$ and~$\mathcal{A}_{k+1}$ be defined as in~\eqref{eq:updateit} and~\eqref{eq:updateofactive}, respectively. Denoting~$\mathcal{A}_{k+1}=\{u^{k+1}_j\}^{N_{k+1}}_{j=1}$, there is a minimizing pair~$(\lambda^{k+1},\ell^{k+1})$ to~$(\mathcal{P}(\mathcal{A}_{k+1}))$ such that~$\lambda^{k+1}_j>0$,~$j=1, \dots,N_{k+1}$ as well as
\begin{align*}
u_{k+1}= \sum^{N_{k+1}}_{j=1} \lambda^{k+1}_j u^{k+1}_j+\ell^{k+1}, \quad \sum^{N^+_k}_{j=1} \lambda^{k,+}_j=\sum^{N_{k+1}}_{j=1} \lambda^{k+1}_j,\quad \min (\mathcal{P}(\mathcal{A}^+_k))=\min (\mathcal{P}(\mathcal{A}_{k+1})) .
\end{align*}
\end{prop}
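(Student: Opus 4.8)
The plan is to treat this as a purely combinatorial comparison between the two finite-dimensional problems $(\mathcal{P}(\mathcal{A}^+_k))$ and $(\mathcal{P}(\mathcal{A}_{k+1}))$, exploiting that $\mathcal{A}_{k+1}\subseteq\mathcal{A}^+_k$ by construction \eqref{eq:updateofactive} and that neither the function generated by a conic combination $\sum_j\lambda_j u_j+\ell$ nor the penalty $\sum_j\lambda_j$ is affected by inserting or deleting terms carrying zero weight. No functional-analytic input is needed.

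First I would fix the candidate pair. Write $\mathcal{A}^+_k=\{u^{k,+}_j\}_{j=1}^{N^+_k}$ and let $(\lambda^{k,+},\ell^{k,+})$ be the minimizer of $(\mathcal{P}(\mathcal{A}^+_k))$ used in~\eqref{eq:updateit}. Let $\mathcal{A}_{k+1}=\{u^{k+1}_j\}_{j=1}^{N_{k+1}}$ be the sub-collection of $\mathcal{A}^+_k$ obtained by keeping exactly the indices $j$ with $\lambda^{k,+}_j>0$, let $\lambda^{k+1}\in\R^{N_{k+1}}_+$ be the corresponding subvector (so $\lambda^{k+1}_j>0$ for all $j$), and put $\ell^{k+1}=\ell^{k,+}$. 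Since each deleted term $\lambda^{k,+}_j u^{k,+}_j$ with $\lambda^{k,+}_j=0$ equals the zero function and contributes nothing to $\sum_j\lambda^{k,+}_j$, one reads off directly that $u_{k+1}=\sum_{j=1}^{N_{k+1}}\lambda^{k+1}_j u^{k+1}_j+\ell^{k+1}$ and $\sum_{j=1}^{N^+_k}\lambda^{k,+}_j=\sum_{j=1}^{N_{k+1}}\lambda^{k+1}_j$. In particular $(\lambda^{k+1},\ell^{k+1})$ is feasible for $(\mathcal{P}(\mathcal{A}_{k+1}))$ and attains there the same objective value $f(u_{k+1})+\sum_j\lambda^{k+1}_j$ as $(\lambda^{k,+},\ell^{k,+})$ attains in $(\mathcal{P}(\mathcal{A}^+_k))$; hence $\min(\mathcal{P}(\mathcal{A}_{k+1}))\le\min(\mathcal{P}(\mathcal{A}^+_k))$.

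For the reverse inequality I would argue in the opposite direction: given any feasible $(\mu,\ell)\in\R^{N_{k+1}}_+\times\mathcal{L}$ for $(\mathcal{P}(\mathcal{A}_{k+1}))$, extend $\mu$ to $\tilde\mu\in\R^{N^+_k}_+$ by assigning weight $0$ to every extremal point in $\mathcal{A}^+_k\setminus\mathcal{A}_{k+1}$. Then $(\tilde\mu,\ell)$ is feasible for $(\mathcal{P}(\mathcal{A}^+_k))$ with the same objective value, so $\min(\mathcal{P}(\mathcal{A}^+_k))\le\min(\mathcal{P}(\mathcal{A}_{k+1}))$. Combining the two bounds gives $\min(\mathcal{P}(\mathcal{A}^+_k))=\min(\mathcal{P}(\mathcal{A}_{k+1}))$, and since $(\lambda^{k+1},\ell^{k+1})$ was shown to realize this common value, it is a minimizing pair for $(\mathcal{P}(\mathcal{A}_{k+1}))$ with all components of $\lambda^{k+1}$ strictly positive, which is everything claimed. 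There is no genuine obstacle here; the only point requiring a little care is the reindexing of the ordered finite sets and the observation that the zero-padding $\R^{N_{k+1}}_+\hookrightarrow\R^{N^+_k}_+$ leaves the $\mathcal{L}$-component untouched and preserves both summands of the objective, which is immediate from the explicit form of~$(\mathcal{P}(\cdot))$.
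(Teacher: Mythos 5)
Your proposal is correct and follows essentially the same route as the paper: restrict $\lambda^{k,+}$ to its strictly positive components to get a feasible pair for $(\mathcal{P}(\mathcal{A}_{k+1}))$ with unchanged objective value, and use zero-padding for the reverse inequality $\min(\mathcal{P}(\mathcal{A}^+_k))\leq\min(\mathcal{P}(\mathcal{A}_{k+1}))$. The paper compresses both bounds into a single chain of inequalities but the content is identical; your explicit justification of the reverse inequality is if anything slightly more complete than the paper's.
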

\begin{proof}
Introduce the set of indices
\begin{align*}
\mathcal{I}_k \coloneqq \left\{\,j \in \{1,\dots, N^+_k\}\;\middle|\;\lambda^{k,+}_j >0\right\}.
\end{align*}
By construction there holds
\begin{align*}
\mathcal{A}_{k+1}= \left\{\,u^{k,+}_j\;\middle|\;j \in \mathcal{I}_k\,\right\} \subset \mathcal{A}^+_k
\end{align*}
as well as
\begin{align*}
u_{k+1}= \sum_{j \in \mathcal{I}_k} \lambda^{k,+}_j u^{k,+}_j+ \ell^{k,+} \in \operatorname{cone}(\mathcal{A}_{k+1})+\mathcal{L}
\end{align*}
and
\begin{align*}
\min (\mathcal{P}(\mathcal{A}_{k+1}))\leq f(u_{k+1})+ \sum_{j \in \mathcal{I}_{k}} \lambda^{k,+}_j=f(u_{k+1})+ \sum^{N^+_k}_{j=1} \lambda^{k,+}_j= \min (\mathcal{P}(\mathcal{A}^+_k)) \leq \min (\mathcal{P}(\mathcal{A}_{k+1})).
\end{align*}
This finishes the proof.
\end{proof}

Finally, the iteration counter is incremented by one, i.e.,~$k\coloneqq k+1$ and the next iteration starts. The individual steps of the method are again summarized in Algorithm~\ref{alg:accgcg}. Note that we have incorporated a "zeroth" iteration without an extreme point insertion step to ensure that~$p_1,P_1 \in C_0(0,T)$.
\begin{algorithm}
\begin{flushleft}
\hspace*{\algorithmicindent} \textbf{Input:}~$\mathcal{A}_0=\{u^0_j\}^{N_0}_{j=1}$ in $\operatorname{Ext}B$.
\\
\hspace*{\algorithmicindent} \textbf{Output:} Stationary point~$\bar{u}$ to~\eqref{def:minprob}.
\end{flushleft}
\begin{algorithmic}
\STATE 1. Compute a minimizer~$(\lambda^1, \ell^1)$ to~$(\mathcal{P}(\mathcal{A}_0))$ and set
\begin{align*}
u_1=\sum^{N_0}_{j=1} \lambda^1_j u^0_j+\ell^1,~\mathcal{A}_{1}= \mathcal{A}_0 \setminus \left\{\,u^0_i\;|\;\lambda^1_j=0,~j \in \{1,\dots, N_0\}\,\right\}
\end{align*}
\FOR {$k=1,2,\dots$}
\vspace*{0.3em}
\STATE 2. Compute~$p_k, P_k \in C_0(0,T)$ as in~\eqref{eq:iterduals} and~$\hat{x}^S_k,~\hat{x}^K_k \in (0,T) $ with
\begin{align*}
|p_k(\hat{x}^S_k)|=  \|p_k\|_C,~|P_k(\hat{x}^K_k)|= \|P_k\|_C
\end{align*}
\IF{$\|p_k\|_C\leq \alpha,~\|P_k\|_C\leq \beta$}
\vspace*{0.3em}
\STATE 3. Terminate with~$ \bar{u}=u_k$ a stationary point to~\eqref{def:minprob}.
\vspace*{0.3em}
\ELSE
\vspace*{0.3em}
\STATE 4. Define~$\widehat{u}_k$ and $\mathcal{A}^+_k=\{u^{k,+}_j\}^{N^+_k}_{j=1}$ according to~\eqref{def:newgamma} and~\eqref{eq:intermediateactive}.

\vspace*{0.3em}
\STATE 5. Compute a minimizer~$(\lambda^{k,+}, \ell^{k,+})$ to~$(\mathcal{P}(\mathcal{A}^+_k))$, set
\begin{align*}
u_{k+1}=\sum^{N^+_k}_{j=1} \lambda^{k,+}_j u^{k,+}_j+\ell^{k,+},~\mathcal{A}_{k+1}= \mathcal{A}^+_k \setminus \left\{\,u^{k,+}_j\;\middle|\;\lambda^{k,+}_j=0,~j \in \{1,\dots, N^+_k\}\,\right\}
\end{align*}
and increment~$k=k+1$.
\ENDIF
\ENDFOR
\end{algorithmic}
\caption{Solution algorithm for~\eqref{def:minprob}}
\label{alg:accgcg}
\end{algorithm}
\begin{remark} \label{rem:altsubrpob}
It is worth pointing out that Algorithm~\ref{alg:accgcg} does not guarantee the monotonicity of the objective functional values~$\{J(u_k)\}_k$. More in detail, by construction, we only get~$J(u_k)\leq \min (\mathcal{P}(\mathcal{A}_k))$ as well as~$\min (\mathcal{P}(\mathcal{A}_{k+1}))\leq  \min (\mathcal{P}(\mathcal{A}_{k}))$ for every~$k \geq 1$. This observation could serve as a motivation to consider the alternative subproblem
\begin{align} \label{eq:altsubprob}
\min_{\lambda \in \mathbb{R}^N_+, \ell \in \mathcal{L} } \left \lbrack J \left( \sum^{N}_{j=1} \lambda_j u_j+ \ell \right) \right \rbrack. \tag{$\mathcal{P}_{\text{TGV}}(\mathcal{A})$}
\end{align}
instead of~\eqref{def:subprob}. In general, this leads to a different method since~$\min(\mathcal{P}_{\text{TGV}}(\mathcal{A}))<\min(\mathcal{P}(\mathcal{A}))$ in some cases. For a particular example based on the functions of Remark \ref{rem:counterexample} we refer to Appendix~\ref{sec:counterexample}. While solving~$(\mathcal{P}_{\text{TGV}}(\mathcal{A}^+_k))$ in step 5 ensures 
\begin{align*}
J(u_{k+1})= \min(\mathcal{P}_{\text{TGV}}(\mathcal{A}_{k+1}))\leq\mathcal{P}_{\text{TGV}}(\mathcal{A}_{k})=J(u_k)
\end{align*}
for every~$k\geq 1$, it also requires tailored solution algorithms to handle the appearing~$\TGV$-functional. In contrast,~$(\mathcal{P}(\mathcal{A}^+_k))$ represents a smooth minimization problem with inequality constraints which can be tackled by commonly available black-box solvers.
\end{remark}
\subsection{Convergence rates} \label{subsec:convanalysis}
In the following, we always denote by 
\begin{align*}
\mathcal{A}_k=\{u^k_j\}^{N_k}_{j=1} \subset \ext B,~u_k=\sum^{N_k}_{j=1} \lambda^k_j u^k_j,~(\lambda^k,\ell^k) \in \argmin (\mathcal{P}(\mathcal{A}_k))
\end{align*} 
the active set and iterate generated by Algorithm~\ref{alg:accgcg} in iteration~$k\geq 1$. The associated dual variables~$p_k \in H^1_0(0,T)$,~$P_k \in H^2(0,T) \cap H^1_0(0,T)$ as well as the new candidate function~$\widehat{u}_k$ are defined as in~\eqref{eq:iterduals} and~\eqref{def:newgamma}, respectively. The following assumptions on the objective functional~$j$ are made throughout this section.
\begin{ass} \label{ass:convergence}
Assume that:
\begin{itemize}
\item[\textbf{A1}] The function~$f\colon L^2(0,T) \to \R$ is continuously Fr\'echet differentiable.
\item[\textbf{A2}] The Riesz-representative~$\nabla f\colon L^2(0,T) \to L^2(0,T)$ of the derivative is Lipschitz continuous, i.e., there is~$L_f>0$ with
\begin{align*}
\|\nabla f(u_1)-\nabla f(u_2)\|_{L^2} \leq L_f \|u_1-u_2\|_{L^2} \quad \text{for all}~u_1,u_2 \in L^2(0,T).
\end{align*}
\item[\textbf{A3}] There holds~$f(u)\geq -C_f$ for all~$u \in L^2(0,T)$ and some~$C_f>0$.
\item[\textbf{A4}] The sublevel sets of~$J$ are bounded in~$\BV(0,T)$.
\end{itemize}
\end{ass}
By standard arguments, it is readily verified that these assumptions guarantee the existence of at least one global minimizer in Problem~\eqref{def:minprob}.
\begin{prop} \label{prop:existence}
Let Assumption~\ref{ass:convergence} hold. Then Problem~\eqref{def:minprob} admits at least one minimizer.
\end{prop}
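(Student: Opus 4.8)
The plan is to apply the direct method of the calculus of variations. First I would record that $J$ is bounded below: by \textbf{A3} we have $f(u) \geq -C_f$, and $\TGV(u) \geq 0$ directly from the definition \eqref{eq:tgvsupdef}, so $m := \inf_{u \in \BV(0,T)} J(u) \geq -C_f > -\infty$. Choose a minimizing sequence $\{u_k\} \subset \BV(0,T)$ with $J(u_k) \to m$; discarding finitely many terms we may assume $J(u_k) \leq m+1$ for all $k$, so that $\{u_k\}$ lies in a sublevel set of $J$. By \textbf{A4} this set is bounded in $\BV(0,T)$, hence $\sup_k \|u_k\|_{\BV} < \infty$.

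Next I would extract a convergent subsequence. Since $\{u_k\}$ is bounded in $\BV(0,T)$, and in one dimension $\BV(0,T)$ embeds compactly into $L^p(0,T)$ for every $p < \infty$ (the $L^1$ bound together with $\sup_k \|Du_k\|_M < \infty$ yields precompactness in $L^1(0,T)$, and the continuous embedding $\BV(0,T) \hookrightarrow L^\infty(0,T)$ of \cite[Thm.~3.47]{AmbFusPal00} upgrades this to $L^2(0,T)$ by interpolation), there is a subsequence, not relabeled, and some $\bar u \in \BV(0,T)$ with $u_k \wksto \bar u$ in $\BV(0,T)$ — in particular $Du_k \wksto D\bar u$ in $M(0,T)$ — and $u_k \to \bar u$ strongly in $L^2(0,T)$; that $\bar u \in \BV(0,T)$ follows from $\|D\bar u\|_M \leq \liminf_k \|Du_k\|_M$.

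Finally I would pass to the limit summand by summand. The functional $f$ is continuous on $L^2(0,T)$ by \textbf{A1} (continuous Fréchet differentiability implies continuity), so $f(u_k) \to f(\bar u)$. For the regularizer, for each admissible $v$ in \eqref{eq:tgvsupdef} the map $u \mapsto \int_0^T u\,v''$ is continuous with respect to $L^1(0,T)$ convergence, since $v'' \in C^\infty_c(0,T) \subset L^\infty(0,T)$; hence $\TGV$, being a pointwise supremum of such maps, is sequentially lower semicontinuous along $u_k \to \bar u$, and $\TGV(\bar u) \leq \liminf_k \TGV(u_k)$. Combining,
\[
J(\bar u) = f(\bar u) + \TGV(\bar u) \leq \lim_k f(u_k) + \liminf_k \TGV(u_k) = \liminf_k J(u_k) = m ,
\]
so $\bar u \in \BV(0,T)$ is a minimizer of \eqref{def:minprob}.

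I do not expect a genuine obstacle; the only points needing a little care are that the minimizing sequence must be extracted to converge in $L^2(0,T)$ rather than merely in $L^1(0,T)$ so that continuity of $f$ can be used — which is exactly where the one-dimensional embedding $\BV(0,T) \hookrightarrow L^\infty(0,T)$ enters — and the observation that \textbf{A1}, \textbf{A3}, \textbf{A4} already suffice, the Lipschitz condition \textbf{A2} playing no role in the existence statement.
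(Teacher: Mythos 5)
Your proof is correct and follows essentially the same route as the paper: a minimizing sequence is bounded in $\BV(0,T)$ by \textbf{A4}, a subsequence converges weak-* and (by compact embedding) strongly in $L^2(0,T)$, and one concludes via continuity of $f$ and lower semicontinuity of $\TGV$. The extra details you supply — the lower semicontinuity of $\TGV$ via its supremum definition \eqref{eq:tgvsupdef} and the observation that \textbf{A2} is not needed — are both accurate.
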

\begin{proof}
Let~$\{u_j\}_j$ denote an infimizing sequence for~\eqref{def:minprob}, i.e.,
\begin{align*}
\lim_{j \rightarrow \infty}J(u_j)=\inf_{u \in\BV(0,T)} J(u) \geq -C_f.
\end{align*}
According to Assumption~\ref{ass:convergence}~(\textbf{A4}),~$\{u_j\}_j$ is bounded in~$BV(0,T)$. Thus, it admits at least one subsequence, denoted by the same index with~$u_j \rightharpoonup^* \bar{u}$ for some~$\bar{u} \in\BV(0,T)$. This implies~$u_j \rightarrow \bar{u}$ in~$L^2(0,T)$ due to~$\BV(0,T) \hookrightarrow_c L^2(0,T)$. Due to the weak lower semicontinuity of the~$\TGV$-functional as well as the continuity of~$f$ we finally arrive at
\begin{align*}
\inf_{u\in\BV(0,T)} J(u)=\liminf_{j \rightarrow \infty} J(u_j) \geq J(\bar{u}) \geq \inf_{u\in\BV(0,T)} J(u).
\end{align*}
This proves that~$\bar{u}$ is a minimizer of~\eqref{def:minprob}.
\end{proof}
Throughout this section we silently assume that Algorithm~\ref{alg:accgcg} does not converge after finitely many iterations but generates a sequence~$\{u_k\}_k$ in~$\BV(0,T)$. In order to measure the non-stationary of the iterate~$u_k$ consider the following~\textit{constraint violation}
\begin{align} \label{def:defpsi}
\Psi(u_k):=M_0 \left( \max\left\{\|p_k\|_C/\alpha,\|P_k\|_C/\beta\right\} -1 \right)~\text{where}~M_0=f(0)+C_f.
\end{align}
Note that we have~$\Psi(u_k)>0$, due to the termination criterion of Algorithm~\ref{alg:accgcg}, as well as~$\TGV(u_k)\leq M_0$ for all~$k \geq 1$ since
\begin{align} \label{eq:TGVuk}
-C_f+ \TGV(u_k) \leq J(u_k) \leq \min (\mathcal{P}(\mathcal{A}_k))\leq f(0). 
\end{align}
The following proposition relates~$\Psi(u_k)$ to the ``per-iteration descent'' of Algorithm~\ref{alg:accgcg} with respect to the residual
\begin{align} \label{def:residual}
\widehat{r}_J(u_k) \coloneqq f(u_k)+\sum^{N_k}_{j=1} \lambda^k_j- \min_{u \in\BV(0,T)} J(u).
\end{align}

\begin{prop} \label{prop:descentlemma}
Let~$\Psi(u_k)$ and~$\widehat{r}_J(u_{k})$ be defined as in~\eqref{def:defpsi} and~\eqref{def:residual}, respectively. There holds
\begin{align} \label{eq:descentforanys}
\widehat{r}_J(u_{k+1}) - \widehat{r}_J(u_{k}) \leq -s \Psi(u_k)+ \frac{s^2L_f}{2} \|u_k-\widehat{u}_{k}\|^2_{L^2} \quad \text{for all}~s \in [0,1]\end{align}
for all~$k\geq1$.
In particular, we have
\begin{align} \label{eq:optdescent}
\widehat{r}_J(u_{k+1}) - \widehat{r}_J(u_{k}) \leq - \max \left\{\frac{\Psi(u_k)^2}{2L_f\|u_k-\widehat{u}_{k}\|^2_{L^2}},\Psi(u_k)-\frac{L_f}{2} \|u_k-\widehat{u}_{k}\|^2_{L^2}\right\}.
\end{align}
and thus~$\min(\mathcal{P}(\mathcal{A}_{k+1}))<\min(\mathcal{P}(\mathcal{A}_{k}))$ for all~$k \geq 1$.
\end{prop}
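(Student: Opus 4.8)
The plan is to carry out the standard conditional-gradient (Frank--Wolfe) descent estimate, exploiting that Algorithm~\ref{alg:accgcg} solves the surrogate subproblem $(\mathcal{P}(\mathcal{A}^+_k))$ exactly. By the preceding proposition, $\min(\mathcal{P}(\mathcal{A}_{k+1}))=\min(\mathcal{P}(\mathcal{A}^+_k))$ and $u_{k+1}$ is constituted by a minimizing pair of $(\mathcal{P}(\mathcal{A}_{k+1}))$, so together with~\eqref{def:residual} one has $\widehat{r}_J(u_{k+1})=\min(\mathcal{P}(\mathcal{A}^+_k))-\min_{u\in\BV(0,T)}J(u)$ and $\widehat{r}_J(u_k)=\min(\mathcal{P}(\mathcal{A}_k))-\min_{u\in\BV(0,T)}J(u)$; hence the left-hand side of~\eqref{eq:descentforanys} equals $\min(\mathcal{P}(\mathcal{A}^+_k))-\min(\mathcal{P}(\mathcal{A}_k))$. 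Since $\widehat{u}_k\in\mathcal{A}^+_k$, for every $s\in[0,1]$ the function $u_k+s(\widehat{u}_k-u_k)=(1-s)u_k+s\widehat{u}_k$ lies in $\operatorname{cone}(\mathcal{A}^+_k)+\mathcal{L}$, with the old atoms carrying weights $(1-s)\lambda^k_j\geq 0$ and $\widehat{u}_k$ carrying weight $s\geq 0$. Testing the objective of $(\mathcal{P}(\mathcal{A}^+_k))$ at this point and invoking its minimality gives $\min(\mathcal{P}(\mathcal{A}^+_k))\leq f\big(u_k+s(\widehat{u}_k-u_k)\big)+(1-s)\sum_j\lambda^k_j+s$.

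Next I would linearize $f$ about $u_k$ by means of the descent lemma guaranteed by Assumption~\ref{ass:convergence}~(\textbf{A2}), namely $f\big(u_k+s(\widehat{u}_k-u_k)\big)\leq f(u_k)+s\,(\nabla f(u_k),\widehat{u}_k-u_k)_{L^2}+\frac{s^2L_f}{2}\|\widehat{u}_k-u_k\|^2_{L^2}$, and then use that $\widehat{u}_k$ is a linear-minimization vertex: Proposition~\ref{prop:maxcertifies} gives $(\nabla f(u_k),\widehat{u}_k)_{L^2}=\min_{v\in B}(\nabla f(u_k),v)_{L^2}=-\max\{\|p_k\|_C/\alpha,\|P_k\|_C/\beta\}$, while the first-order condition~\eqref{eq:firstordersubprob2} for $(\mathcal{P}(\mathcal{A}_k))$ gives $(\nabla f(u_k),u_k)_{L^2}=-\sum_j\lambda^k_j$. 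Substituting both and subtracting $\min(\mathcal{P}(\mathcal{A}_k))=f(u_k)+\sum_j\lambda^k_j$, the $\sum_j\lambda^k_j$ terms cancel and the coefficient of $s$ collapses to $1-\max\{\|p_k\|_C/\alpha,\|P_k\|_C/\beta\}$, which by the definition~\eqref{def:defpsi} of $\Psi$ is, up to the fixed normalizing factor $M_0$, precisely $-\Psi(u_k)$; this is~\eqref{eq:descentforanys}.

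To obtain~\eqref{eq:optdescent} it remains to minimize the quadratic-in-$s$ right-hand side $\phi(s)=-s\Psi(u_k)+\frac{s^2L_f}{2}\|u_k-\widehat{u}_k\|^2_{L^2}$ over the admissible values of $s$: its unconstrained minimizer is $s^\star=\Psi(u_k)/\big(L_f\|u_k-\widehat{u}_k\|^2_{L^2}\big)$, giving the value $-\Psi(u_k)^2/\big(2L_f\|u_k-\widehat{u}_k\|^2_{L^2}\big)$ when $s^\star$ is admissible, while the endpoint $s=1$ always gives $-\Psi(u_k)+\frac{L_f}{2}\|u_k-\widehat{u}_k\|^2_{L^2}$; since $a^2/(2c)\geq a-c/2$ for all $c>0$, these assemble into the maximum in~\eqref{eq:optdescent}. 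For the remaining assertion, the termination criterion of Algorithm~\ref{alg:accgcg} was not met at step $k$, so $\max\{\|p_k\|_C/\alpha,\|P_k\|_C/\beta\}>1$ and hence $\Psi(u_k)>0$; then $\phi(s)<0$ for all sufficiently small $s>0$, whence $\widehat{r}_J(u_{k+1})<\widehat{r}_J(u_k)$, which is exactly $\min(\mathcal{P}(\mathcal{A}_{k+1}))<\min(\mathcal{P}(\mathcal{A}_k))$. I expect the only real difficulty to be bookkeeping --- aligning the feasible test point and its weights so that the $\sum_j\lambda^k_j$ terms cancel and the constant $M_0$ lands correctly --- the one structural ingredient being the appeal to Proposition~\ref{prop:maxcertifies}, which converts the abstract gradient inner product into the computable constraint violation $\Psi(u_k)$.
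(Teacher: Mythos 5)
Your overall skeleton is the same as the paper's: bound $\widehat r_J(u_{k+1})-\widehat r_J(u_k)$ by $\min(\mathcal{P}(\mathcal{A}^+_k))-\min(\mathcal{P}(\mathcal{A}_k))$, insert a test point built from $\widehat u_k$ into the surrogate, apply the descent lemma, and convert the gradient pairings via \eqref{eq:firstordersubprob2} and Proposition~\ref{prop:maxcertifies}. There is, however, a genuine gap in the normalization of the inserted atom. With your test point $(1-s)u_k+s\widehat u_k$, i.e.\ weight $s$ on $\widehat u_k$, the surrogate penalty increases by $s\cdot 1$ while the linearization contributes $s\big(\sum_j\lambda^k_j-\max\{\|p_k\|_C/\alpha,\|P_k\|_C/\beta\}\big)$; after cancellation the coefficient of $s$ is $1-\max\{\|p_k\|_C/\alpha,\|P_k\|_C/\beta\}=-\Psi(u_k)/M_0$, not $-\Psi(u_k)$. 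Since $M_0=f(0)+C_f$ is a fixed constant that is in general larger than $1$, ``up to the normalizing factor $M_0$'' is not a harmless discrepancy: your argument establishes only $\widehat r_J(u_{k+1})-\widehat r_J(u_k)\le -\tfrac{s}{M_0}\Psi(u_k)+\tfrac{s^2L_f}{2}\|u_k-\widehat u_k\|^2_{L^2}$, which is strictly weaker than \eqref{eq:descentforanys} and would degrade the constants in Theorems~\ref{thm:convrate} and~\ref{thm:convrateconvex} accordingly.

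The repair is precisely the paper's device: since the subproblem is posed over the \emph{cone} of active atoms, the new atom may be inserted with any nonnegative mass, and the paper takes $\tilde u_{k,s}=sM_0\widehat u_k+(1-s)\sum_j\lambda^k_ju^k_j+\ell^k$. Then the $\ell_1$-penalty increases by $sM_0$, the linear term decreases by $sM_0\max\{\|p_k\|_C/\alpha,\|P_k\|_C/\beta\}$ (leaving $\ell^k$ unscaled costs nothing because $(\nabla f(u_k),\ell^k)_{L^2}=0$ by \eqref{eq:firstordersubprob1}), and the net coefficient of $s$ is $-M_0\big(\max\{\|p_k\|_C/\alpha,\|P_k\|_C/\beta\}-1\big)=-\Psi(u_k)$ exactly. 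The factor $M_0$ is not arbitrary: it is the a priori bound $\TGV(u_k)\le M_0$ from \eqref{eq:TGVuk}, which is what makes $M_0\widehat u_k$ the correct Frank--Wolfe vertex for the implicit linear minimization over $\{\TGV\le M_0\}$ (compare Lemma~\ref{lem:boundforTVsum}). Your handling of the optimization over $s$ for \eqref{eq:optdescent} and of the strict decrease $\min(\mathcal{P}(\mathcal{A}_{k+1}))<\min(\mathcal{P}(\mathcal{A}_k))$ via $\Psi(u_k)>0$ coincides with the paper's.
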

\begin{proof}
For every~$s\in[0,1]$ define
\begin{align*}
 \tilde{u}_{k,s} \coloneqq sM_0 \widehat{u}_k+(1-s)\sum^{N_k}_{j=1} \lambda^k_j u^k_j+ \ell^k \in \operatorname{cone}(\mathcal{A}^+_k)+\mathcal{L}.
\end{align*}
By construction there holds
\begin{align*}
\widehat{r}_J(u_{k+1})-\widehat{r}_J(u_{k}) \leq f(\tilde{u}_{k,s})-f(u_{k})+s\left(M_0- \sum^{N_k}_{j=1} \lambda^k_j \right) 
\end{align*}
Utilizing a Taylor's expansion as well as the Lipschitz continuity of~$\nabla f$ we get
\begin{align*}
f(\tilde{u}_{k,s})-f(u_{k}) \leq -s (\nabla f(u_k),u_k-M_0 \widehat{u}_k)_{L^2}+ \frac{s^2L_f}{2} \|u_k-\widehat{u}_k\|^2_{L^2}
\end{align*}
as in the proof of the standard descent lemma, see~\cite[(1.2.5)]{nesterov}. Then, using~\eqref{eq:firstordersubprob1} as well as~\eqref{eq:linearmin}, we arrive at
\begin{align*}
-s (\nabla f(u_k),u_k-M_0 \widehat{u}_k)_{L^2}=s \left( \sum^{N_k}_{j=1} \lambda^k_j-M_0 \max \{\|p_k\|_C/\alpha,\|P_k\|_C/\beta\}  \right)
\end{align*}
Combining the previous observations, finally yields
\begin{align*}
\widehat{r}_J(u_{k+1})-\widehat{r}_J(u_{k}) \leq -s \Psi(u_k)+\frac{s^2L_f}{2} \|u_k-\widehat{u}_k\|^2_{L^2}.
\end{align*}
The claim in~\eqref{eq:optdescent} now follows immediately by minimizing the right-hand side w.r.t.~$s\in[0,1]$.
\end{proof}
In the following, Proposition~\ref{prop:descentlemma} is used to show that~$\Psi(u_k)$ asymptotically vanishes. Moreover, based on the next lemma, we also conclude that~$\sum^{N_k}_{j=1} \lambda^k_j$ approximates~$\TGV(u_k)$ for large~$k$.
\begin{lemma} \label{lem:boundforTVsum}
For all~$k \geq 1$ we have
\begin{align*}
0 \leq \sum^{N_k}_{j=1} \lambda^k_j-\TGV(u_k) \leq \Psi(u_k).
\end{align*}
\end{lemma}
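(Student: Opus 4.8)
The proof splits into the two inequalities, both of which follow directly from facts already recorded for the iterates of Algorithm~\ref{alg:accgcg}. First I would dispatch the lower bound $0 \le \sum_{j=1}^{N_k}\lambda^k_j - \TGV(u_k)$: since $u_k = \sum_{j=1}^{N_k}\lambda^k_j u^k_j + \ell^k$ with $\lambda^k_j \ge 0$, $u^k_j \in \ext B$ and hence $\TGV(u^k_j) = 1$, sublinearity and translation invariance of $\TGV$ give $\TGV(u_k) \le \sum_{j=1}^{N_k}\lambda^k_j$, exactly as already observed in~\eqref{eq:upperforTGV}.

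For the upper bound, the plan is to test a scaled subgradient inequality against $u_k$. I would set $M := \max\{\|p_k\|_C/\alpha, \|P_k\|_C/\beta\}$, so that $M \ge 1$ (indeed $M>1$) since the stopping criterion of Algorithm~\ref{alg:accgcg} fails at iteration $k$. Then consider $\bar\varphi := -\nabla f(u_k)/M \in L^2(0,T)$: tracking the sign conventions in~\eqref{def:pandP} and~\eqref{eq:iterduals}, the primitives associated with $\bar\varphi$ are precisely $p_k/M$ and $P_k/M$, which satisfy $\|p_k/M\|_C \le \alpha$ and $\|P_k/M\|_C \le \beta$. Moreover, the first-order conditions~\eqref{eq:firstordersubprob1} for $(\mathcal{P}(\mathcal{A}_k))$ give $(\nabla f(u_k),\ell)_{L^2} = 0$ for all $\ell \in \mathcal{L}$, hence also $(\bar\varphi,\ell)_{L^2} = 0$. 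Therefore the implication from $3.$ to $1.$ in Lemma~\ref{lem:normboundsonpandP} applies to $\bar\varphi$ and yields $-\tfrac{1}{M}(\nabla f(u_k), v)_{L^2} \le \TGV(v)$ for every $v \in \BV(0,T)$.

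Evaluating this inequality at $v = u_k$ and inserting the subproblem identity~\eqref{eq:firstordersubprob2}, namely $-(\nabla f(u_k), u_k)_{L^2} = \sum_{j=1}^{N_k}\lambda^k_j$, gives $\sum_{j=1}^{N_k}\lambda^k_j \le M\,\TGV(u_k)$. Combining this with $M - 1 \ge 0$ and the a priori bound $\TGV(u_k) \le M_0$ from~\eqref{eq:TGVuk}, I conclude
\[
\sum_{j=1}^{N_k}\lambda^k_j - \TGV(u_k) \;\le\; (M-1)\,\TGV(u_k) \;\le\; (M-1)\,M_0 \;=\; \Psi(u_k),
\]
which is the asserted estimate. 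I do not expect a genuine obstacle here; the only points demanding care are matching the sign conventions so that the primitives of $-\nabla f(u_k)$ coincide with the $p_k, P_k$ of~\eqref{eq:iterduals}, and invoking the correct implication inside Lemma~\ref{lem:normboundsonpandP}.
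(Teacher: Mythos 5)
Your proof is correct. The lower bound is handled exactly as in the paper, via the sublinearity estimate \eqref{eq:upperforTGV}. For the upper bound you take a slightly different, though closely related, route: the paper exploits that $u_k$ is feasible for the linearized problem $\min_{\TGV(v)\le M_0}\left[(\nabla f(u_k),v)_{L^2}+\TGV(v)\right]$ and evaluates its minimal value as $-\Psi(u_k)$ using \eqref{eq:linearmin} and positive one-homogeneity, whereas you rescale $-\nabla f(u_k)$ by $M=\max\{\|p_k\|_C/\alpha,\|P_k\|_C/\beta\}$ so that the implication $3.\Rightarrow 1.$ of Lemma~\ref{lem:normboundsonpandP} turns it into a certificate $(\bar\varphi,v)_{L^2}\le\TGV(v)$, which you then evaluate at $v=u_k$ via \eqref{eq:firstordersubprob2}. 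Since Lemma~\ref{lem:normboundsonpandP} and \eqref{eq:linearmin} are both ultimately consequences of Theorem~\ref{thm:existlinoverext} and the extremal point characterization, the two arguments rest on the same ingredients (the subproblem optimality conditions, the a priori bound $\TGV(u_k)\le M_0$ from \eqref{eq:TGVuk}, and the dual description of the $\TGV$ ball through $p_k$ and $P_k$); your version has the small advantage of producing the intermediate estimate $\sum_{j}\lambda^k_j-\TGV(u_k)\le(M-1)\,\TGV(u_k)$, which is marginally sharper than $\Psi(u_k)=(M-1)M_0$ before the final relaxation. You also correctly identify the two points that require care: the sign conventions making the primitives of $-\nabla f(u_k)/M$ equal to $p_k/M$ and $P_k/M$, and the standing assumption that the algorithm has not terminated, which guarantees $M>1$.
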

\begin{proof}
Recall that~$\TGV(u_k)\leq M_0$ for all~$k\geq 1$ according to~\eqref{eq:TGVuk}. Hence we have
\begin{align*}
(\nabla f(u_k),u_k)_{L^2}+\TGV(u_k)-\min_{\TGV(v)\leq M_0} \left \lbrack (\nabla f(u_{k}),v)_{L^2}+\TGV(v)  \right \rbrack \geq 0
\end{align*}
Moreover, we readily calculate
\begin{align*}
(\nabla f(u_k),u_k)_{L^2}+\TGV(u_k)&-\min_{\TGV(v)\leq M_0} \left \lbrack (\nabla f(u_{k}),v)_{L^2}+\TGV(v) \right \rbrack
 \\
&= \TGV(u_k)-\sum^{N_k}_{j=1}\lambda^k_j-\min_{\substack{v \in B, \\m \in[0,M_0]}} \left \lbrack m\big((\nabla f(u_k),v)_{L^2}+1 \big) \right \rbrack \\ &=
\TGV(u_k)-\sum^{N_k}_{j=1}\lambda^k_j -\min_{m \in[0,M_0]} \left \lbrack m\big(1-\max\left\{\|p_k\|_C/\alpha,\|P_k\|_C/\beta\right\} \big) \right \rbrack  \\ &=
\TGV(u_k)-\sum^{N_k}_{j=1}\lambda^k_j + M_0\big(\max\left\{\|p_k\|_C/\alpha,\|P_k\|_C/\beta\right\}-1 \big)
\end{align*}
where we use~\eqref{eq:firstordersubprob2} in the first term of the left-hand side,~\eqref{eq:linearmin} as well as the positive one homogeneity of the~$\TGV$-function in the third and~$\max\left\{\|p_k\|_C/\alpha,\|P_k\|_C/\beta\right\}>1$ in the final equality. Combining both statements and reordering yields the claimed statement. 
\end{proof} 
\begin{theorem} \label{thm:convofpsi}
Let~$\{u_k\}_k$ be generated by Algorithm~\ref{alg:accgcg} and let Assumption~\ref{ass:convergence} hold. Then we have
\begin{align*}
\lim_{k \rightarrow \infty} \Psi(u_k)+ \left|\sum^{N_k}_{j=1} \lambda^k_j-\TGV(u_k) \right|=0.
\end{align*}
\end{theorem}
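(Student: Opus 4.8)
The plan is to deduce the statement from the descent estimate~\eqref{eq:optdescent} of Proposition~\ref{prop:descentlemma} together with Lemma~\ref{lem:boundforTVsum}, after securing two auxiliary facts: that the residual $\widehat{r}_J$ stays nonnegative and non-increasing along the iteration, and that $\|u_k-\widehat{u}_k\|_{L^2}$ is bounded uniformly in $k$.

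First I would observe that $\widehat{r}_J(u_k)=\min(\mathcal{P}(\mathcal{A}_k))-\min_u J(u)\ge 0$. Indeed, since $(\lambda^k,\ell^k)$ is optimal we have $f(u_k)+\sum_{j=1}^{N_k}\lambda^k_j=\min(\mathcal{P}(\mathcal{A}_k))$, while $\min_u J(u)\le J(u_k)=f(u_k)+\TGV(u_k)\le f(u_k)+\sum_{j=1}^{N_k}\lambda^k_j$ by~\eqref{eq:upperforTGV}, with $\min_u J(u)$ attained thanks to Proposition~\ref{prop:existence}. Next, \eqref{eq:descentforanys} with $s=0$ (equivalently, the monotonicity $\min(\mathcal{P}(\mathcal{A}_{k+1}))\le\min(\mathcal{P}(\mathcal{A}_k))$) shows that $\{\widehat{r}_J(u_k)\}_k$ is non-increasing, hence convergent, so that $\widehat{r}_J(u_k)-\widehat{r}_J(u_{k+1})\to 0$ and the telescoping series $\sum_k\big(\widehat{r}_J(u_k)-\widehat{r}_J(u_{k+1})\big)$ is finite.

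For the uniform bound I would argue as follows. Testing $(\mathcal{P}(\mathcal{A}_0))$ with the zero pair gives $\min(\mathcal{P}(\mathcal{A}_0))\le f(0)$, so by monotonicity of the subproblem values one gets $J(u_k)\le\min(\mathcal{P}(\mathcal{A}_k))\le f(0)$ for all $k$; thus $\{u_k\}_k$ lies in a fixed sublevel set of $J$, which is bounded in $\BV(0,T)$ by Assumption~\ref{ass:convergence}~(\textbf{A4}), and the embedding $\BV(0,T)\hookrightarrow L^2(0,T)$ yields $\sup_k\|u_k\|_{L^2}<\infty$. Combined with $\|\widehat{u}_k\|_{L^2}\le C_{\widehat{u}}$ from Proposition~\ref{prop:maxcertifies}, this produces a constant $C>0$ with $\|u_k-\widehat{u}_k\|_{L^2}\le C$ for all $k$.

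Then I would simply plug in: since the first entry of the maximum in~\eqref{eq:optdescent} is nonnegative,
\begin{align*}
\widehat{r}_J(u_k)-\widehat{r}_J(u_{k+1})\ \ge\ \frac{\Psi(u_k)^2}{2L_f\,\|u_k-\widehat{u}_k\|_{L^2}^2}\ \ge\ \frac{\Psi(u_k)^2}{2L_f C^2}\ \ge\ 0,
\end{align*}
so summing over $k$ and using the finiteness of the telescoping series gives $\sum_k\Psi(u_k)^2<\infty$, hence $\Psi(u_k)\to 0$. Finally, Lemma~\ref{lem:boundforTVsum} gives $0\le\sum_{j=1}^{N_k}\lambda^k_j-\TGV(u_k)\le\Psi(u_k)$, so that term vanishes as well, and adding the two limits yields the claim. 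The only step I expect to require genuine care is the uniform $L^2$-bound on $u_k-\widehat{u}_k$, since it is where Assumption~\ref{ass:convergence}~(\textbf{A4}) enters and relies on tracking both $J(u_k)\le\min(\mathcal{P}(\mathcal{A}_k))$ and the monotonicity $\min(\mathcal{P}(\mathcal{A}_{k+1}))\le\min(\mathcal{P}(\mathcal{A}_k))$; everything else is routine bookkeeping with results already established.
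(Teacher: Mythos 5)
Your proof is correct and rests on the same pillars as the paper's: the descent estimate of Proposition~\ref{prop:descentlemma}, the uniform $L^2$-bounds on $u_k$ and $\widehat{u}_k$ coming from $J(u_k)\le\min(\mathcal P(\mathcal A_k))\le f(0)$ together with Assumption~\ref{ass:convergence} and Proposition~\ref{prop:maxcertifies}, and Lemma~\ref{lem:boundforTVsum} to transfer the conclusion to the $\TGV$-gap. The one place you diverge is in how the descent inequality is exploited: the paper works from the per-step estimate \eqref{eq:descentforanys}, fixing a small step $s_\eps\le\eps/(4L_fC^2)$ and a tail index $K_\eps$ and sandwiching $\Psi(u_k)\le\eps$ (the Lacoste--Julien-style argument), whereas you invoke the optimized bound \eqref{eq:optdescent} to deduce $\sum_k\Psi(u_k)^2<\infty$ by telescoping. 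Your route is shorter and in fact yields the slightly stronger square-summability of $\Psi(u_k)$; it is also exactly the deduction the paper itself makes later in Theorem~\ref{thm:convrate}, so you are not using anything beyond what the paper already relies on. The trade-off is that your argument leans on the optimized form \eqref{eq:optdescent} (whose derivation requires the unconstrained optimal step to be admissible), while the paper's $\eps$--$s_\eps$ argument only needs the elementary inequality \eqref{eq:descentforanys} for every $s\in[0,1]$ and is therefore marginally more robust. Your preliminary observations that $\widehat{r}_J(u_k)\ge 0$ and that $\{\widehat{r}_J(u_k)\}_k$ is non-increasing are correctly justified and match what the paper uses implicitly.
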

\begin{proof}
We only prove $\lim_{k \rightarrow \infty} \Psi(u_k)=0$, the result on~$\TGV(u_k)$ then follows from Lemma~\ref{lem:boundforTVsum}. We proceed similarly to~\cite{LacosteJulien16}. First, note that~$\{u_k\}_k$ and~$\{\widehat{u}_k\}_k$ are bounded in~$L^2(0,T)$ in virtue of~$J(u_k) \leq f(0)$ for all~$k\geq 1$, Assumption~\ref{ass:convergence}~(\textbf{A3}) and Proposition~\ref{prop:maxcertifies}. Moreover we have
\begin{align*}
0 \leq \sum^\infty_{j=1} \left \lbrack \widehat{r}_J(u_k)-\widehat{r}_J(u_{k+1}) \right\rbrack \leq \widehat{r}_J(u_1)
\end{align*}
and thus~$\lim_{k \rightarrow \infty}\left \lbrack \widehat{r}_J(u_k)-\widehat{r}_J(u_{k+1}) \right\rbrack=0 $.
Now, let~$\eps>0$ be arbitrary but fixed and let~$C>0$ be such that~$\max\{\|u_k\|_{L^2},M_0 \|\widehat{u}_k\|_{L^2}\}\leq C$,~$k\geq 1$. Furthermore, choose~$s_\eps \in [0,1]$ and~$K_\eps \in \mathbb{N}$ with
\begin{align*}
s_\eps \leq \frac{\eps}{4L_fC^2},\quad \widehat{r}_J(u_k)-\widehat{r}_J(u_{k+1}) \leq \frac{s_\eps \eps}{2} \quad \text{for all}~k \geq K_\eps.
\end{align*}
Inserting~$s_\eps$ into~\eqref{eq:descentforanys} we arrive at
\begin{align*}
0 \leq \Psi(u_k) \leq \frac{\widehat{r}_J(u_k)-\widehat{r}_J(u_{k+1})}{s_\eps} + s_\eps 2L_fC^2 \leq \eps \quad \text{for all}~k \geq K_\eps.
\end{align*}
Since~$\eps >0$ was chosen arbitrarily, we finally conclude~$\lim_{k\rightarrow \infty} \Psi(u_k)=0$.
\end{proof}
Next we show that~$\{u_k\}_k$ converges, on subsequences, towards stationary points of~\eqref{def:minprob}.  
\begin{coroll} \label{coroll:convofiterates}
The sequence~$\{u_k\}_k$ admits at least one weak* accumulation point in~$\BV(0,T)$. Every such point~$\bar{u}$ satisfies~$-\nabla f(\bar{u}) \in \partial_2 \TGV(\bar{u})$.
\end{coroll}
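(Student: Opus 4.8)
The plan is to combine the compactness provided by Assumption~\ref{ass:convergence}~(\textbf{A4}), the asymptotic vanishing of the non-stationarity measure from Theorem~\ref{thm:convofpsi}, and the characterization of the subdifferential in Proposition~\ref{prop:subdiffpre}.

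First I would produce the accumulation point. From~\eqref{eq:TGVuk} we have $J(u_k)\le\min(\mathcal{P}(\mathcal{A}_k))\le f(0)$ for all $k\ge1$, so $\{u_k\}_k$ lies in a fixed sublevel set of $J$, which is bounded in $\BV(0,T)$ by~(\textbf{A4}). Exactly as in the proof of Proposition~\ref{prop:existence}, this yields a subsequence (not relabeled) with $u_k\rightharpoonup^*\bar u$ in $\BV(0,T)$ and $u_k\to\bar u$ in $L^2(0,T)$ via $\BV(0,T)\hookrightarrow_c L^2(0,T)$. Fix now any such $\bar u$ and the corresponding subsequence. By~(\textbf{A1}) (or~(\textbf{A2})) the map $\nabla f$ is continuous on $L^2(0,T)$, hence $\nabla f(u_k)\to\nabla f(\bar u)$ in $L^2(0,T)$; setting $\bar p(t):=\int_0^t\nabla f(\bar u)\dd s$ and $\bar P(t):=-\int_0^t\bar p\dd s$, the elementary estimates $\|p_k-\bar p\|_C\le\sqrt T\,\|\nabla f(u_k)-\nabla f(\bar u)\|_{L^2}$ and $\|P_k-\bar P\|_C\le T\|p_k-\bar p\|_C$ give $p_k\to\bar p$ and $P_k\to\bar P$ uniformly on $[0,T]$, so in particular $\|p_k\|_C\to\|\bar p\|_C$ and $\|P_k\|_C\to\|\bar P\|_C$.

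Then I would verify the three conditions of Proposition~\ref{prop:subdiffpre} for $\bar\varphi:=-\nabla f(\bar u)$. Passing to the limit in the first-order condition~\eqref{eq:firstordersubprob1}, $(\nabla f(u_k),\ell)_{L^2}=0$ for all $\ell\in\mathcal{L}$, yields $(-\nabla f(\bar u),\ell)_{L^2}=0$ for all $\ell\in\mathcal{L}$; by Lemma~\ref{lem:orthogonality} this already gives $\bar p\in H^1_0(0,T)$ and $\bar P\in H^1_0(0,T)\cap H^2(0,T)$, hence $\bar p,\bar P\in C_0(0,T)$. Next, Theorem~\ref{thm:convofpsi} gives $\Psi(u_k)\to0$, that is $\max\{\|p_k\|_C/\alpha,\|P_k\|_C/\beta\}\to1$; combined with the uniform convergence this forces $\max\{\|\bar p\|_C/\alpha,\|\bar P\|_C/\beta\}=1$, so $\|\bar p\|_C\le\alpha$ and $\|\bar P\|_C\le\beta$, and by the implication $3.\Rightarrow1.$ of Lemma~\ref{lem:normboundsonpandP} we obtain $(-\nabla f(\bar u),v)_{L^2}\le\TGV(v)$ for all $v\in\BV(0,T)$. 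It remains to establish $(-\nabla f(\bar u),\bar u)_{L^2}=\TGV(\bar u)$: the inequality ``$\le$'' is the previous bound at $v=\bar u$, while for ``$\ge$'' I would use~\eqref{eq:firstordersubprob2}, $-(\nabla f(u_k),u_k)_{L^2}=\sum_{j=1}^{N_k}\lambda^k_j$, together with Lemma~\ref{lem:boundforTVsum} (or directly Theorem~\ref{thm:convofpsi}), which give $-(\nabla f(u_k),u_k)_{L^2}-\TGV(u_k)\to0$; since the product converges to $-(\nabla f(\bar u),\bar u)_{L^2}$, so does $\TGV(u_k)$, and weak* lower semicontinuity of $\TGV$ yields $\TGV(\bar u)\le\lim_k\TGV(u_k)=-(\nabla f(\bar u),\bar u)_{L^2}$. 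By Proposition~\ref{prop:subdiffpre} this proves $-\nabla f(\bar u)\in\partial_2\TGV(\bar u)$.

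The only genuinely delicate point is that the bound $(-\nabla f(u_k),v)_{L^2}\le\TGV(v)$ does \emph{not} hold along the sequence --- this is precisely the failure of stationarity of $u_k$ --- so one cannot simply pass to the limit in it; instead one must first pass to the limit in $\|p_k\|_C,\|P_k\|_C$ and invoke $\Psi(u_k)\to0$ to recover the threshold bounds $\|\bar p\|_C\le\alpha$, $\|\bar P\|_C\le\beta$ for the limit. Everything else reduces to continuity of $\nabla f$ on $L^2(0,T)$, uniform convergence of the primitives $p_k,P_k$, and weak* lower semicontinuity of $\TGV$.
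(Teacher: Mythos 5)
Your proposal is correct and follows essentially the same route as the paper's proof: Banach--Alaoglu plus the compact embedding $\BV(0,T)\hookrightarrow_c L^2(0,T)$ for the accumulation point, uniform convergence of $p_k,P_k$ together with $\Psi(u_k)\to 0$ and Lemma~\ref{lem:normboundsonpandP} for the bound $(-\nabla f(\bar u),v)_{L^2}\le\TGV(v)$, and \eqref{eq:firstordersubprob2} combined with weak* lower semicontinuity of $\TGV$ for the equality $(-\nabla f(\bar u),\bar u)_{L^2}=\TGV(\bar u)$, concluding via Proposition~\ref{prop:subdiffpre}. Your closing remark about why one cannot pass to the limit directly in the stationarity inequality is exactly the point the paper's argument is structured around.
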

\begin{proof}
Since~$\BV(0,T)$ is the topological dual of a separable Banach space and~$\{u_k\}_k$ is bounded in~$\BV(0,T)$, the existence of a weak* convergent subsequence follows from the Banach-Alaoglu theorem. Fix an arbitrary weak* convergent subsequence, denoted by the same symbol, with limit~$\bar{u}$. Define
\begin{align*}
\bar{p}= \int^{\boldsymbol{\cdot}}_0 \nabla f(\bar{u})(x)~\mathrm{d}x,~\bar{P}= -\int^{\boldsymbol{\cdot}}_0 \bar{p}(x)~\mathrm{d}x.
\end{align*}
Since~$\BV(0,T) \hookrightarrow_c L^2(0,T)$ there holds~$u_k \rightarrow \bar{u}$ in~$L^2(0,T)$. Using this strong convergence as well as~Assumption~\ref{ass:convergence}~(\textbf{A2}) and \eqref{eq:firstordersubprob1} we get
\begin{align} \label{eq:ukconv1}
(\nabla f(\bar{u}),\ell)_{L^2}=\lim_{k\rightarrow \infty} (\nabla f(u_k),\ell)_{L^2}=0 \quad \text{for all}~\ell \in \Lc.
\end{align}
Thus,~$\bar{p} \in H^1_0(0,T)$ and~$\bar{P}\in H^2(0,T) \cap H^1_0(0,T)$. Moreover, we immediately get
\begin{align*}
p_k \rightarrow \bar{p},~P_k \rightarrow \bar{P} \quad \text{in } C_0(0,T)
\end{align*}
and
\begin{align*}
M_0(\max\{\|\bar{p}\|_C/\alpha,\|\bar{P}\|_C/\beta\}-1)= \lim_{k\rightarrow \infty} \Psi(u_k)=0.
\end{align*}
In view of Lemma~\ref{lem:normboundsonpandP}, this implies
\begin{align}\label{eq:eq:ukconv2}
-(\nabla f(\bar{u}),v)_{L^2} \leq \TGV(v) \quad \text{for all}~v \in \BV(0,T).
\end{align}
Finally, observe that
\begin{align*}
\TGV(\bar{u}) \leq \liminf_{k\rightarrow \infty} \TGV(u_k) \leq \liminf_{k\rightarrow \infty} \sum^{N_k}_{j=1} \lambda^k_j
\end{align*}
as well as
\begin{align*}
\liminf_{k\rightarrow \infty} \sum^{N_k}_{j=1} \lambda^k_j= \liminf_{k\rightarrow \infty} -(\nabla f(u_k),u_k)_{L^2}=-(\nabla f(\bar{u}),\bar{u})_{L^2} \leq \TGV(\bar{u}).
\end{align*}
using the lower semicontinuity of~$\TGV$ as well as~\eqref{eq:firstordersubprob2}.
This yields~$-(\nabla f(\bar{u}),\bar{u})_{L^2}=\TGV(\bar{u})$. Combining the previous observations we arrive at~$-\nabla f(\bar{u})\in \partial_2 \TGV(\bar{u})$ according to Proposition~\ref{prop:subdiffpre}.
\end{proof}
This section is finished by a~\textit{quantitative} description of the asymptotic behavior of~$\Psi(u_k)$. More in detail, it is shown that the smallest constraint violation encountered up to iteration~$k\geq 1$ vanishes at a rate of~$1/k^{1/2}$.
\begin{theorem} \label{thm:convrate}
Let~$\{u_k\}_k$ be generated by Algorithm~\ref{alg:accgcg} and let Assumption~\ref{ass:convergence} hold. Moreover, let~$C>0$ be such that $\max\{\|u_k\|_{L^2},M_0 \|\widehat{u}_k\|_{L^2}\}\leq C$ for all~$k\geq 1$. Then we have
\begin{align*}
\min_{1\leq j \leq k} \Psi(u_k)+ \min_{1\leq j \leq k} \left|\sum^{N_k}_{j=1} \lambda^k_j-\TGV(u_k) \right| \leq \sqrt{\frac{8L_f C^2 \widehat{r}_J(u_1)}{k}} \quad \text{ for all } k \geq 1.
\end{align*}
\end{theorem}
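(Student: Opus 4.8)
The plan is to read the rate off from the per-iteration descent estimate in Proposition~\ref{prop:descentlemma} by a telescoping argument, and then to upgrade a bound on $\min_{1\le j\le k}\Psi(u_j)$ to the combined quantity in the statement via Lemma~\ref{lem:boundforTVsum}. Two preliminary observations make the telescoping legitimate. First, $\widehat r_J$ in~\eqref{def:residual} is a well-defined, finite, nonnegative sequence: by Proposition~\ref{prop:existence} the number $\min_{u\in\BV(0,T)}J(u)$ is finite, and since $\TGV(u)\le\sum_i\lambda_i$ whenever $u=\sum_i\lambda_iu_i+\ell$ with $u_i\in\ext B$, $\lambda_i\ge0$ (see~\eqref{eq:upperforTGV}), one has $\widehat r_J(u_k)=f(u_k)+\sum_i\lambda^k_i-\min_{u}J(u)\ge J(u_k)-\min_uJ(u)\ge0$, with $\widehat r_J(u_1)<\infty$. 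Second, the hypothesis, together with the a priori bound $\|\widehat u_k\|_{L^2}\le C_{\widehat u}$ of Proposition~\ref{prop:maxcertifies}, provides a uniform bound $\|u_k-\widehat u_k\|_{L^2}^2\le 4C^2$ (after harmless enlargement of $C$); the existence of such a $C$ is guaranteed since $J(u_k)\le f(0)$ by~\eqref{eq:TGVuk} confines $\{u_k\}$ to a sublevel set of $J$, which is bounded in $\BV(0,T)$ by Assumption~\ref{ass:convergence}~(\textbf{A4}) and hence in $L^2(0,T)$.

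With these in hand, discarding the second branch of the maximum in~\eqref{eq:optdescent} and inserting the bound on $\|u_k-\widehat u_k\|_{L^2}$ gives
\[\widehat r_J(u_{k+1})-\widehat r_J(u_k)\le-\frac{\Psi(u_k)^2}{2L_f\|u_k-\widehat u_k\|_{L^2}^2}\le-\frac{\Psi(u_k)^2}{8L_fC^2}\qquad\text{for all }k\ge1.\]
Summing over $j=1,\dots,k$ and using $\widehat r_J(u_{k+1})\ge0$ yields $\sum_{j=1}^k\Psi(u_j)^2\le8L_fC^2\,\widehat r_J(u_1)$, hence $k\big(\min_{1\le j\le k}\Psi(u_j)\big)^2\le8L_fC^2\,\widehat r_J(u_1)$, i.e.\ $\min_{1\le j\le k}\Psi(u_j)\le\sqrt{8L_fC^2\widehat r_J(u_1)/k}$. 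Finally, Lemma~\ref{lem:boundforTVsum} gives $0\le\sum_i\lambda^j_i-\TGV(u_j)\le\Psi(u_j)$ for every $j$, so $\min_{1\le j\le k}\big|\sum_i\lambda^j_i-\TGV(u_j)\big|\le\min_{1\le j\le k}\Psi(u_j)$ as well; adding the two estimates gives the claimed $1/\sqrt k$ rate (up to tracking the precise value of the absolute constant, which follows from collecting the factors above, together with the exact bound used for $\|u_k-\widehat u_k\|_{L^2}$).

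The argument is in essence pure assembly on top of Proposition~\ref{prop:descentlemma} and Lemma~\ref{lem:boundforTVsum}; I expect the only point requiring care—and the only one that truly relies on the earlier theory rather than elementary manipulations—to be the verification that $\widehat r_J$ is finite and nonnegative with summable increments, which is why Proposition~\ref{prop:existence} and the upper bound $\TGV\le\sum_i\lambda_i$ are invoked. As an alternative to using~\eqref{eq:optdescent}, one may sum~\eqref{eq:descentforanys} with a single step size $s\in(0,1]$ common to all iterations, obtain $\min_{1\le j\le k}\Psi(u_j)\le\widehat r_J(u_1)/(ks)+2sL_fC^2$, and optimize over $s$, treating the degenerate case $s^\ast>1$ (relevant only for small $k$, where the right-hand side is anyway comparable to $\Psi(u_1)$) separately; this produces the same rate.
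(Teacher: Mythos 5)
Your proposal is correct and follows essentially the same route as the paper: apply the quadratic branch of \eqref{eq:optdescent} with the uniform bound $\|u_k-M_0\widehat u_k\|_{L^2}^2\le 4C^2$, telescope the resulting inequality $\widehat r_J(u_{j+1})-\widehat r_J(u_j)\le -\Psi(u_j)^2/(8L_fC^2)$ using $\widehat r_J\ge 0$, and transfer the bound to the $\TGV$ discrepancy via Lemma~\ref{lem:boundforTVsum}. The extra care you take (nonnegativity of $\widehat r_J$, the provenance of the constant, and the factor-of-two looseness when adding the two minima) is warranted but does not change the argument.
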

\begin{proof}
Again, we only prove the result for~$\Psi(u_k)$ and point out Lemma~\ref{lem:boundforTVsum}. First, invoking~\eqref{eq:optdescent} we arrive at
\begin{align} \label{eq:descenthelpineq}
\widehat{r}_J(u_{k+1}) - \widehat{r}_J(u_{k}) \leq - \max \left\{\frac{\Psi(u_k)^2}{8L_f C^2},\Psi(u_k)-2 C^2\right\}\leq -\frac{ \min_{1\leq j\leq k}\Psi(u_j)^2}{8L_f C^2}.
\end{align}
Summing up over~$1\leq j\leq$ on both sides we get
\begin{align*}
-\widehat{r}_J(u_{1}) \leq \widehat{r}_J(u_{k+1}) - \widehat{r}_J(u_{1}) \leq- k\frac{ \min_{1\leq j\leq k}\Psi(u_j)^2}{8L_f C^2}.
\end{align*}
Rearranging and taking the square root then yields the desired statement.\end{proof}
Moreover, if~$f$ is convex, there holds~$J(u_k)-\min_{u \in \BV(0,T)} J(u)= \mathcal{O}(1/k)$.
\begin{theorem}\label{thm:convrateconvex}
Let~$\{u_k\}_k$ be generated by Algorithm~\ref{alg:accgcg}, let Assumption~\ref{ass:convergence} hold and assume that~$f$ is convex. Then we have
\begin{align*}
J(u_k)-\min_{u \in \BV(0,T)} J(u) \leq \widehat{r}_J(u_k) \leq \Psi(u_k).\end{align*} 
Moreover, let~$C>0$ be such that $\max\{\|u_k\|_{L^2},M_0 \|\widehat{u}_k\|_{L^2}\}\leq C$ for all~$k\geq 1$. Then we have
\begin{align*}
J(u_k)-\min_{u \in \BV(0,T)} J(u) \leq \widehat{r}_J(u_k) \leq \frac{\widehat{r}_J(u_1)}{1+q(k-1)} \quad \text{where}~q=\frac{\widehat{r}_J(u_{1})}{8L_f C^2}.
\end{align*}
\end{theorem}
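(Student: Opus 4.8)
The plan is to prove the displayed chain in two stages. First I would establish $J(u_k)-\min_{u\in\BV(0,T)}J(u)\leq\widehat{r}_J(u_k)\leq\Psi(u_k)$: the left inequality is immediate from $J(u_k)\leq\min(\mathcal{P}(\mathcal{A}_k))=f(u_k)+\sum_{j=1}^{N_k}\lambda^k_j$ and needs no convexity, while the right one is where convexity of $f$ enters. Second, I would feed $\widehat{r}_J(u_k)\leq\Psi(u_k)$ into the per-iteration descent estimate~\eqref{eq:optdescent} of Proposition~\ref{prop:descentlemma} to obtain a self-improving recursion for $r_k:=\widehat{r}_J(u_k)$, and conclude the $\mathcal{O}(1/k)$ bound by the classical telescoping argument. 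I expect the only genuinely delicate point to be the inequality $\widehat{r}_J(u_k)\leq\Psi(u_k)$.

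For that inequality, observe that any minimizer $\bar u$ of $J$ (which exists by Proposition~\ref{prop:existence}) satisfies $\TGV(\bar u)\leq M_0$, since $-C_f+\TGV(\bar u)\leq J(\bar u)\leq J(0)=f(0)$; hence $\min_{u\in\BV(0,T)}J(u)=\min_{\TGV(u)\leq M_0}J(u)$. Using convexity and Fr\'echet differentiability of $f$ to linearize, $f(u)\geq f(u_k)+(\nabla f(u_k),u-u_k)_{L^2}$ pointwise, so
\[
\min_u J(u)\ \geq\ f(u_k)-(\nabla f(u_k),u_k)_{L^2}+\min_{\TGV(u)\leq M_0}\big[(\nabla f(u_k),u)_{L^2}+\TGV(u)\big].
\]
Parametrizing $u=mv$ with $m\in[0,M_0]$ and $v\in B$, and invoking $\min_{v\in B}(\nabla f(u_k),v)_{L^2}=-\max\{\|p_k\|_C/\alpha,\|P_k\|_C/\beta\}$ from Proposition~\ref{prop:maxcertifies} together with $\max\{\|p_k\|_C/\alpha,\|P_k\|_C/\beta\}>1$ (the algorithm has not terminated), the inner minimum equals $\min_{m\in[0,M_0]}m\big(1-\max\{\ldots\}\big)=-M_0\big(\max\{\ldots\}-1\big)=-\Psi(u_k)$. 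Combined with $-(\nabla f(u_k),u_k)_{L^2}=\sum_{j=1}^{N_k}\lambda^k_j$ from~\eqref{eq:firstordersubprob2}, this gives $\min_u J(u)\geq f(u_k)+\sum_{j=1}^{N_k}\lambda^k_j-\Psi(u_k)$, i.e.\ $\widehat{r}_J(u_k)\leq\Psi(u_k)$.

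For the rate, set $r_k:=\widehat{r}_J(u_k)\geq 0$ and $a:=1/(8L_fC^2)$. Using the bound $\max\{\|u_k\|_{L^2},M_0\|\widehat u_k\|_{L^2}\}\leq C$ in~\eqref{eq:optdescent} exactly as in the proof of Theorem~\ref{thm:convrate}, we obtain $r_{k+1}-r_k\leq-a\,\Psi(u_k)^2\leq-a\,r_k^2$, the last step using $0\leq r_k\leq\Psi(u_k)$ from the first stage; in particular $\{r_k\}$ is non-increasing. If $r_k=0$ for some $k$ the claimed bound is trivial since the $r_j$ are nonnegative and non-increasing; otherwise $r_k>0$ for all $k$, so positivity of $r_{k+1}\leq r_k(1-a\,r_k)$ forces $a\,r_k<1$, and dividing by $r_kr_{k+1}$ and using $1/(1-t)\geq1+t$ for $t\in[0,1)$ yields $1/r_{k+1}-1/r_k\geq a$. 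Telescoping from $1$ to $k$ and inverting gives $r_k\leq\widehat{r}_J(u_1)/\big(1+a\,\widehat{r}_J(u_1)(k-1)\big)=\widehat{r}_J(u_1)/(1+q(k-1))$ with $q=\widehat{r}_J(u_1)/(8L_fC^2)$, as claimed.

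The crux, as anticipated, is that the middle inequality must be proved by first restricting $J$ to the sublevel set $\{\TGV(\cdot)\leq M_0\}$ and only then linearizing $f$: over all of $\BV(0,T)$ the functional $u\mapsto f(u_k)+(\nabla f(u_k),u-u_k)_{L^2}+\TGV(u)$ is unbounded below while the algorithm runs, since $-\nabla f(u_k)$ has $\TGV$-dual norm strictly larger than $1$ at that stage. Everything else is the Frank--Wolfe-type bookkeeping already set up in Proposition~\ref{prop:descentlemma}, Lemma~\ref{lem:boundforTVsum} and the proof of Theorem~\ref{thm:convrate}.
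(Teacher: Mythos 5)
Your proposal is correct and follows essentially the same route as the paper: the key inequality $\widehat{r}_J(u_k)\leq\Psi(u_k)$ is obtained, as in the paper, by linearizing $f$ at $u_k$ via convexity, restricting the resulting minimization to $\{\TGV(u)\leq M_0\}$, and identifying the inner minimum with $-\Psi(u_k)$ through Proposition~\ref{prop:maxcertifies} and \eqref{eq:firstordersubprob2}, after which the rate follows from the descent estimate \eqref{eq:descenthelpineq}. The only cosmetic difference is that you prove the resulting recursion $r_{k+1}\leq r_k-a r_k^2$ gives the $\mathcal{O}(1/k)$ bound by an explicit telescoping of $1/r_k$ (handling the degenerate case $r_k=0$ along the way), where the paper simply cites \cite[Lemma 5.1]{dunn}.
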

\begin{proof}
Let~$\bar{u}$ denote an arbitrary minimizer to~\eqref{def:minprob}. Then there holds~$\TGV(\bar{u})\leq M_0$ as well as
\begin{align*}
J(u_k)-\min_{u \in \BV(0,T)}  J(u)=J(u_k) - J(\bar{u}) \leq \widehat{r}_J(u_k)  
\end{align*}
By convexity of~$f$ we further have
\begin{align*}
\widehat{r}_J(u_k) &\leq (\nabla f(u_k),u_k-\bar{u})_{L^2}- \sum^{N_k}_{j=1} \lambda^k_j-\TGV(\bar{u}) =- (\nabla f(u_k),\bar{u})_{L^2}+\TGV(\bar{u})
\\ &\leq
- \min_{\TGV(u) \leq M_0} \left \lbrack (\nabla f(u_k),u)_{L^2}+\TGV(u) \right\rbrack.
\end{align*}
where we explicitly use~\eqref{eq:firstordersubprob2}. Now, invoking Proposition~\ref{prop:maxcertifies} yields
\begin{align*}
- \min_{\TGV(u) \leq M_0} \left \lbrack (\nabla f(u_k),u)_{L^2}+\TGV(u) \right\rbrack &= - \min_{m \in[0,M_0]} m \min_{v \in B}\left \lbrack (\nabla f(u_k),v)_{L^2}+1 \right\rbrack
\\&=  \max_{m \in[0,M_0]} \left \lbrack m (\max{\{\|p_k\|_C/\alpha, \|P_k\|_C/\beta\}}-1) \right \rbrack \\
&= \Psi(u_k).
\end{align*}
Thus,~\eqref{eq:descenthelpineq} can be further estimated by
\begin{align*}
\frac{\widehat{r}_J(u_{k+1})}{\widehat{r}_J(u_{1})} - \frac{\widehat{r}_J(u_{k})}{\widehat{r}_J(u_{1})}   \leq -\frac{\widehat{r}_J(u_{1})}{8L_f C^2} \left( \frac{\widehat{r}_J(u_{k})}{\widehat{r}_J(u_{1})} \right)^2
\end{align*}
The claimed convergence rate is now immediately deduced from~\cite[Lemma 5.1]{dunn}.
\end{proof}

\section{Deconvolution from restricted Fourier measurements}
\label{sec:deconvol}
The final section is devoted to illustrating the theoretical results as well as to highlighting the practical utility of Algorithm~\ref{alg:accgcg}. 
For this purpose, recall the definition of the Fourier transform of~$u\in L^2(0,T)$ extended by zero to $\R$ as 
\begin{align*}
(\mathcal{F}u)(\zeta )= \int^T_0 u(x)\operatorname{exp}(-i\zeta x)~\mathrm{d} x
\end{align*}
for every~$\zeta \in \R$. Given a finite number of frequencies~$\zeta_j \in \R$,~$j=1,\dots,M$, our interest lies in reconstructing an unknown piecewise affine signal~$u^\dagger$ from measurements of the form~$m^d_j:=(\mathcal{F}u)(\zeta_j )+\eps_j$,~$j=1,\dots,M$, where~$\eps_j$ denotes additive noise. This problem is severely ill-posed due to the gap between the infinite-dimensional nature of the unknown signal and the limited availability of measurements. For example, consider~$T=10$ and the ground truth~$u^\dagger$ depicted in Figure~\ref{fig:groundill}. Its Fourier transform~$\mathcal{F}u^\dagger$ is sampled at eight equidistant points in $(0,T)$, see Figure~\ref{fig:setup}. Then there are~$\bar{a}_j, \bar{b}_j \in \R$,~$j=1,\dots,M$, such that the function
\begin{align} \label{eq:tildeu}
\tilde{u}(x)=  \sum^M_{j=1} \bar{a}_j \cos(\zeta_j x)-\bar{b}_j \sin(\zeta_j x)
\end{align}
satisfies~$(\mathcal{F}\tilde{u})(\zeta_j)=(\mathcal{F}u^\dagger) (\zeta_j)$ for all~$j=1,\dots,M$. Note that~$\tilde{u}$ does not share~\textit{any} structural features of~$u^\dagger$  and even has a period smaller than $T$ due to the location of the first frequency measurement.
\begin{figure}[htb]
\begin{subfigure}[t]{.3\linewidth}
\centering
\includegraphics[scale=0.4]{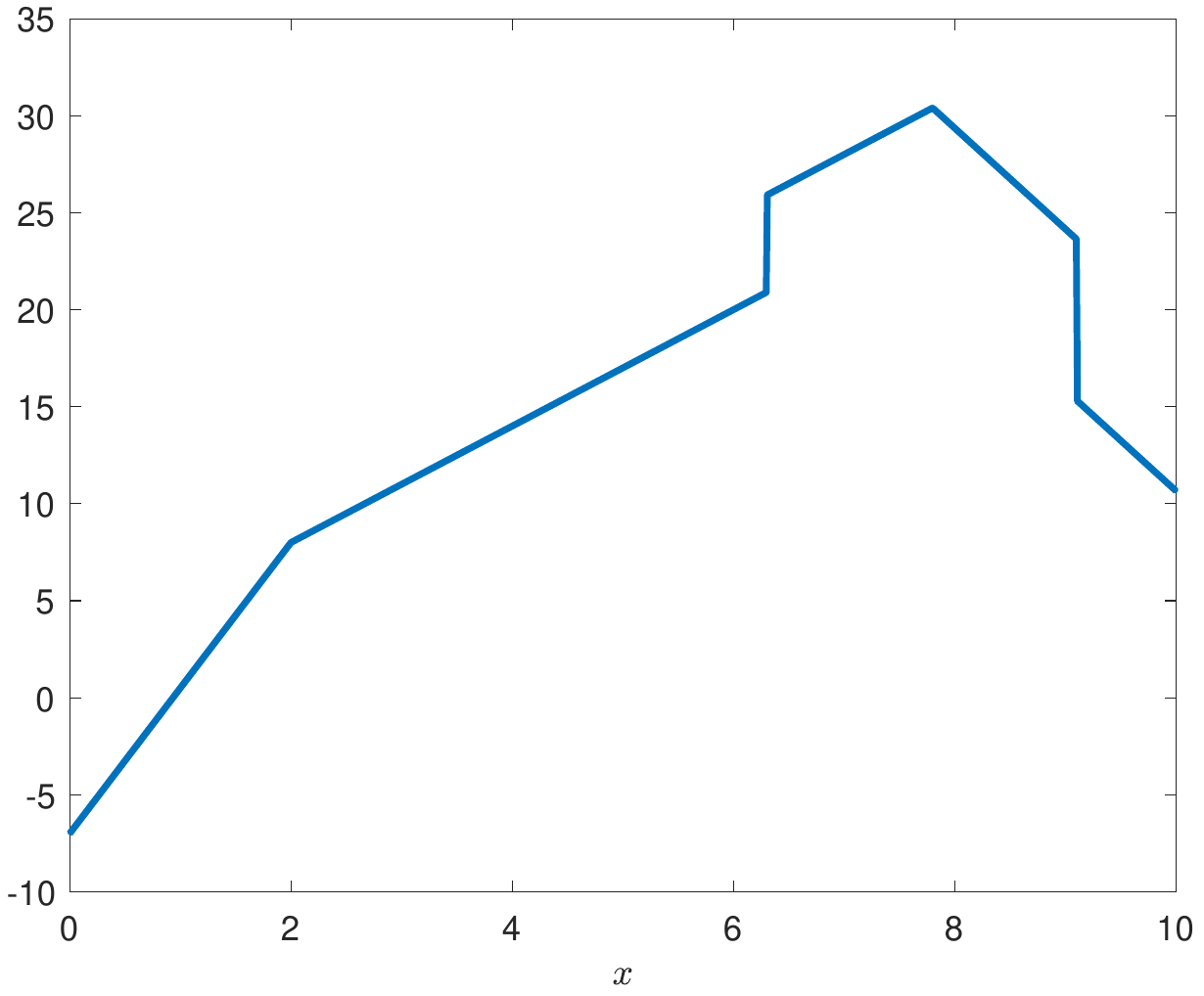}
\caption{Ground truth~$u^\dagger$.}
\label{fig:groundill}
\end{subfigure}
\quad
\begin{subfigure}[t]{.3\linewidth}
\centering
\includegraphics[scale=0.4]{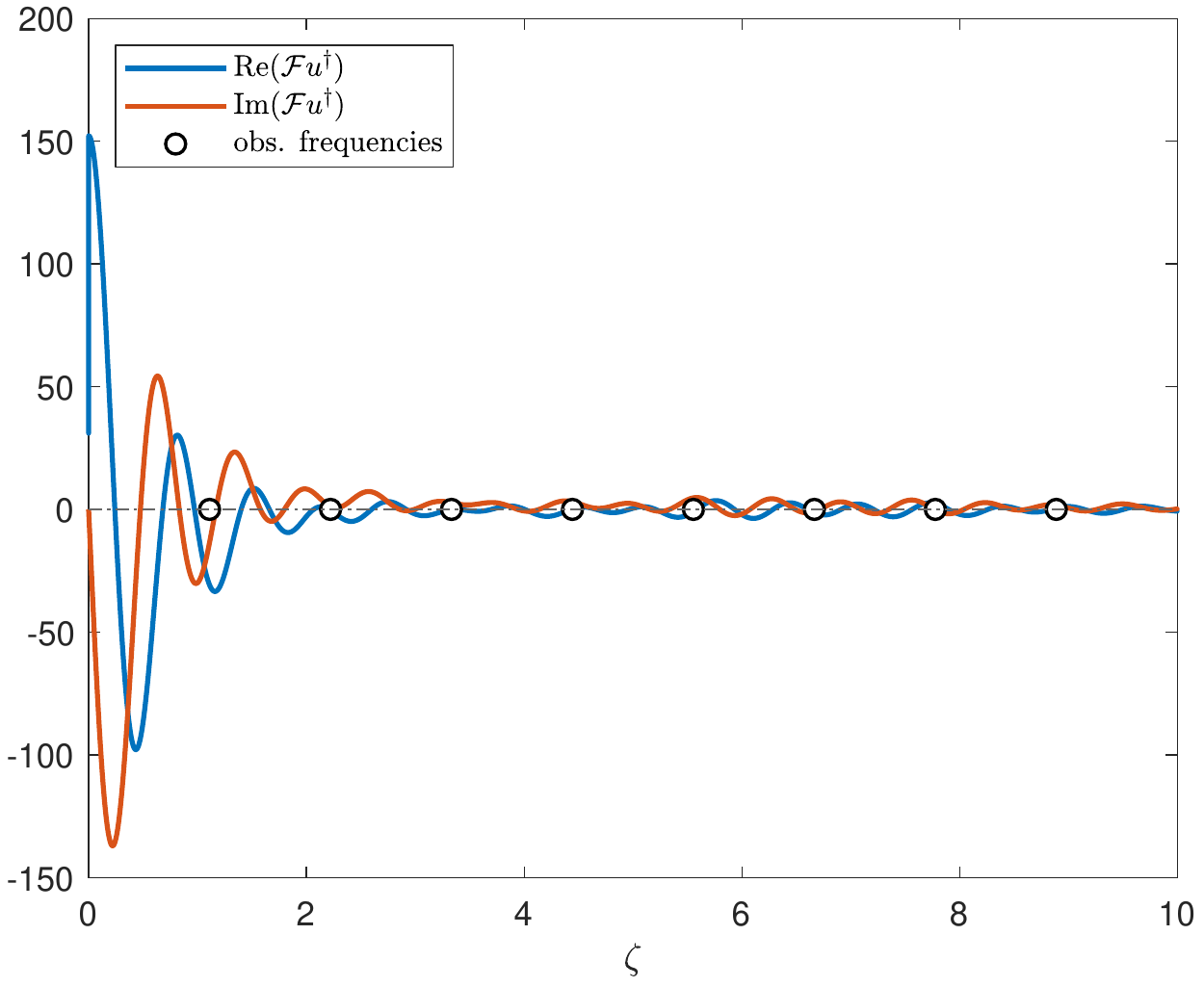}
\caption{$\mathcal{F}u^\dagger$ and~$\zeta_j$.}
\label{fig:setup}
\end{subfigure}
\quad
\begin{subfigure}[t]{.3\linewidth}
\centering
\includegraphics[scale=0.4]{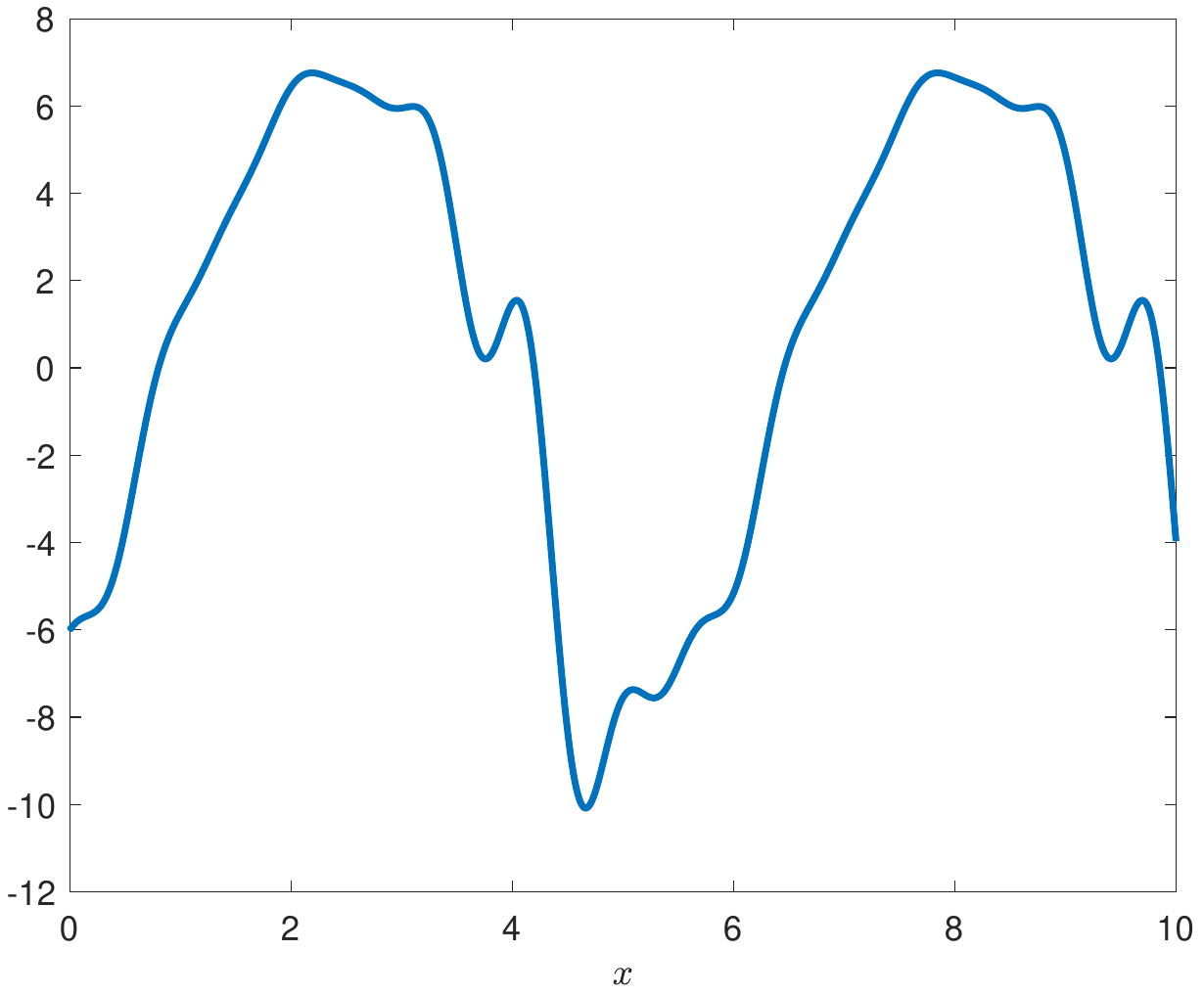}
\caption{Function~$\tilde{u}$ from~\eqref{eq:tildeu}.}
\label{fig:tildeu}
\end{subfigure}
\caption{Inverse problem setup.}
\label{fig:invprobsetup}
\end{figure}
In order to preserve the piecewise affine linear structure of~$u^\dagger$ in the reconstruction and to alleviate the ill-posedness of this inversion, we thus propose to consider the~$\TGV$-regularized problem
\begin{align} \label{def:deconvproblem}
\min_{u \in \BV(0,T)} J(u) \coloneqq\left \lbrack \frac{1}{2}\sum^M_{j=1} |(\mathcal{F}u)(\zeta_j)-m^{d}_{j}|^2_{\mathbb{C}}+ \TGV(u) \right \rbrack. \tag{P}
\end{align}
For abbreviation, define
\begin{align*}
F \colon \mathbb{C}^M \to \R,~ F(m)= \frac{1}{2}\sum^{M}_{j=1} |m_j-m^{d}_{j}|^2_{\mathbb{C}},
\end{align*}
as well as the linear and continuous observation operator
\begin{align} \label{def:observoperator}
H \colon L^2(0,T) \to \mathbb{C}^M, \quad Hu= \left((\mathcal{F}u)(\zeta_1),\dots, (\mathcal{F}u)(\zeta_M) \right)^\top.
\end{align}
The following proposition addresses the existence of sparse minimizers to~\eqref{def:deconvproblem}.
\begin{prop} \label{prop:deconvex}
Let~$H_{|\mathcal{L}}$ be injective. Then~$f= F\circ H$ satisfies Assumption~\ref{ass:convergence} and~\eqref{def:deconvproblem} admits a minimizer~$\bar{u}$ of the form~\eqref{def:sparseuintro} with~$N_K+N_S \leq 2M-2$. Moreover, the optimal observation is unique, i.e., for two solutions~$\bar{u}_1,~\bar{u}_2$ to~\eqref{def:deconvproblem} we have~$H\bar{u}_1=H\bar{u}_2$.\end{prop}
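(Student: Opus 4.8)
The proof has three ingredients: verifying that $f = F\circ H$ satisfies Assumption~\ref{ass:convergence}, applying the representer Theorem~\ref{thm:convexrepresenter} to get the sparse minimizer together with the bound on $N_K+N_S$, and deducing uniqueness of the optimal observation from strict convexity of $F$.

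First I would record the elementary properties. Identifying $\mathbb C^M\simeq\R^{2M}$ as a real Hilbert space, $F(m)=\tfrac12\sum_j|m_j-m^d_j|^2$ is a strictly convex quadratic, hence convex, $C^\infty$ with $\nabla F(m)=m-m^d$, and norm-coercive on $\mathbb C^M$. Since $H$ is linear and continuous, $f=F\circ H$ is convex and $C^1$ with $\nabla f(u)=H^\ast(Hu-m^d)$, which gives \textbf{A1}; Lipschitz continuity of $\nabla f$ with constant $L_f=\|H\|^2$ follows from $\nabla f(u_1)-\nabla f(u_2)=H^\ast H(u_1-u_2)$, giving \textbf{A2}; and $f=F\circ H\ge 0$ yields \textbf{A3} with $C_f=0$. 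For Theorem~\ref{thm:convexrepresenter} I take $Y$ to be the range of $H$ (equal to all of $\mathbb C^M$ when the $\zeta_j$ are distinct, since then the $x\mapsto e^{-i\zeta_j x}$ are linearly independent in $L^2(0,T)$ so $H^\ast$ is injective); the restriction $F|_Y$ remains convex, $C^1$ and norm-coercive on $Y$, and $H(L^2(0,T))=Y$ by construction.

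The only assumption requiring real work is \textbf{A4}, and this is exactly where injectivity of $H_{|\mathcal L}$ is used. Fix $c>0$ and $u\in\BV(0,T)$ with $J(u)\le c$. From $J(u)\ge\TGV(u)$ and Proposition~\ref{prop:quotientstuff} (equivalence of $\TGV$ and the quotient norm on $\BV(0,T)/\mathcal L$) there is $\ell_u\in\mathcal L$ with $\|u-\ell_u\|_{\BV}\le C_1 c$, while $F(Hu)\le c$ forces $\|Hu\|\le\sqrt{2c}+\|m^d\|$; using the embedding $\BV(0,T)\hookrightarrow L^2(0,T)$ this gives $\|H\ell_u\|\le\|Hu\|+\|H\|\,\|u-\ell_u\|_{\BV}\le R(c)$ for an explicit $R(c)$. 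Since $\mathcal L$ is two-dimensional and $H_{|\mathcal L}$ is injective, $\ell\mapsto\|H\ell\|$ is a norm on $\mathcal L$, equivalent to $\|\cdot\|_{\BV}$ there, so $\|\ell_u\|_{\BV}\le C_2 R(c)$ and hence $\|u\|_{\BV}\le C_1 c+C_2 R(c)$. This proves \textbf{A4}; in particular $f$ satisfies Assumption~\ref{ass:convergence}.

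With all hypotheses verified, Theorem~\ref{thm:convexrepresenter} applies and produces a minimizer of the form~\eqref{def:sparseuintro} with $N_K+N_S\le \dim Y-\dim(H(\mathcal L))\le 2M-2$, using $\dim(H(\mathcal L))=\dim\mathcal L=2$ from injectivity of $H_{|\mathcal L}$. For uniqueness of the observation, note $J$ is convex, so if $\bar u_1,\bar u_2$ are minimizers then so is $\tfrac12(\bar u_1+\bar u_2)$, and the chain
\[
\min J = F\big(\tfrac{H\bar u_1+H\bar u_2}{2}\big)+\TGV\big(\tfrac{\bar u_1+\bar u_2}{2}\big)\le \tfrac12 F(H\bar u_1)+\tfrac12 F(H\bar u_2)+\tfrac12\TGV(\bar u_1)+\tfrac12\TGV(\bar u_2)=\min J
\]
forces equality throughout; in particular $F\big(\tfrac{H\bar u_1+H\bar u_2}{2}\big)=\tfrac12 F(H\bar u_1)+\tfrac12 F(H\bar u_2)$, and strict convexity of $F$ gives $H\bar u_1=H\bar u_2$. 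The main obstacle is the boundedness of the sublevel sets (\textbf{A4}); the rest is bookkeeping on top of Theorem~\ref{thm:convexrepresenter} and the strict convexity of the quadratic fidelity.
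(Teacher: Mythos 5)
Your proposal is correct and follows essentially the same route as the paper: verify \textbf{A1}--\textbf{A3} directly, establish \textbf{A4} by controlling the $\mathcal L$-component of $u$ through the data term and the injectivity of $H_{|\mathcal L}$ while the quotient-norm equivalence of Proposition~\ref{prop:quotientstuff} controls the rest, then invoke Theorem~\ref{thm:convexrepresenter} with $\dim(H(\mathcal L))=2$ and conclude uniqueness of the observation from strict convexity of $F$. The only cosmetic difference is that you decompose $u=(u-\ell_u)+\ell_u$ via a near-optimal affine shift where the paper uses the $L^2$-orthogonal projections onto $\mathcal L$ and $\mathcal L^\top$; the estimates are the same.
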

\begin{proof}
Assumptions~\ref{ass:convergence} (\textbf{A1})-(\textbf{A3}) can be verified by direct calculations. In order to show (\textbf{A4}), let~$B>0$ and~$u \in \BV(0,T)$ with~$J(u) \leq B$  be arbitrary but fixed.
Then there holds
\begin{align*}
\max \left\{\big|Hu-m^d\big|_{\mathbb{C}^M}, \TGV(u)\right\} \leq C_B\end{align*}
for some~$C_B>0$ depending on~$B$ but not on~$u$. In the following~$c>0$ denotes a generic constant which is independent of~$u$. We start by estimating
\begin{align*}
\|u\|_{\BV} \leq c \big(\|u\|_{L^1}+ \TGV(u)\big) \leq c \big(\|u\|_{L^2}+ \TGV(u)\big) \leq c \big(\|u\|_{L^2}+ C_{B}\big).
\end{align*}
Denoting by~$\mathcal{L}^\top$ the orthogonal complement of~$\mathcal{L}$ in~$L^2(0,T)$ and by~$P_{\mathcal{L}},~P_{\mathcal{L}^\top}$, the respective orthogonal projections, we have
\begin{align*}
\|u\|_{L^2}=\|P_{\mathcal{L}}u\|_{L^2}+\|P_{\mathcal{L}^\top}u\|_{L^2} ,~\|P_{\mathcal{L}^\top}u\|_{L^2}=\|u+\mathcal{L}\|_{L^2/\mathcal{L}}.
\end{align*}
Now, estimating as in the proof of Proposition~\ref{prop:quotientstuff} and utilizing the continuous embedding~$\BV(0,T) \hookrightarrow L^2(0,T)$, there holds
\begin{align*}
\|P_{\mathcal{L}^\top}u\|_{L^2}=\|u+\mathcal{L}\|_{L^2/\mathcal{L}} \leq c \|u+\mathcal{L}\|_{\BV/\mathcal{L}} \leq c \TGV(u)\leq c\,C_B.
\end{align*}
Moreover, using the injectivity of~$H$ on~$\mathcal{L}$, we get
\begin{align*}
\|P_{\mathcal{L}}u\|_{L^2} &\leq c \left( |HP_{\mathcal{L}}u|_{\mathbb{C}^M} \right) \leq c \left( |HP_{\mathcal{L}^\top}u|_{\mathbb{C}^M}+ |Hu-m^d|_{\mathbb{C}^M}+|m^d|_{\mathbb{C}^M} \right) 
\\& \leq  c \left( \TGV(u)+ |Hu-m^d|_{\mathbb{C}^M}+|m^d|_{\mathbb{C}^M} \right) 
\\& \leq  c \left( 2C_B+|m^d|_{\mathbb{C}^M} \right).
\end{align*}
Hence, combining both estimates, we conclude that the sublevel set~$\{\,u \in \BV(0,T)\;|\;J(u)\leq B\,\}$ is bounded. Since~$B>0$ was chosen arbitrary,~Assumption~\ref{ass:convergence} (\textbf{A4}) follows. Thus, according to Proposition~\ref{prop:existence}, Problem~\eqref{def:deconvproblem} admits at least one minimizer. Invoking Theorem~\ref{thm:convexrepresenter} then yields the existence of minimizer of the form~\eqref{def:sparseuintro} with~$N_k+N_S \leq 2M-2$ since~$\mathbb{C}^M \simeq \R^{2M}$ and~$\operatorname{dim}(H(\mathcal{L}))=2$. The uniqueness of the optimal observation follows immediately from the strict convexity of~$F$.  
\end{proof}
In the following, the optimal observation is denoted by~$\bar{m}$. While Proposition~\ref{prop:deconvex} ensures the existence of at least one minimizer in the form~\eqref{def:sparseuintro}, Corollary~\ref{coroll:analytic} implies that~\textit{all} solutions to~\eqref{def:deconvproblem} are sparse if the optimal misfit is nonzero.
\begin{coroll}
Assume that~$\bar{m} \neq m^d$. Then every solution to~\eqref{def:deconvproblem} is of the form~\eqref{def:sparseuintro}. 
\end{coroll}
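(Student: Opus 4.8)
The plan is to reduce the statement to Corollary~\ref{coroll:analytic}, whose only nontrivial hypothesis, once a minimizer is fixed, is the analyticity of the associated dual variables~$\bar p,\bar P$ from~\eqref{def:pandPopt}. So let~$\bar u$ be an arbitrary solution to~\eqref{def:deconvproblem}; first I would identify~$\nabla f(\bar u)$ explicitly. Since~$f = F\circ H$ with~$F(m)=\tfrac12|m-m^d|^2_{\mathbb C^M}$, the chain rule gives~$\nabla f(\bar u)=H^\ast\nabla F(H\bar u)=H^\ast(\bar m-m^d)$, where~$\bar m=H\bar u$ is the optimal observation, which by Proposition~\ref{prop:deconvex} is \emph{unique} and hence the same for every solution. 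Computing the adjoint of the restricted Fourier operator~\eqref{def:observoperator} shows that~$(H^\ast c)(x)=\operatorname{Re}\sum_{j=1}^M c_j\exp(i\zeta_j x)$, so that
\begin{align*}
\nabla f(\bar u)(x)=\operatorname{Re}\sum_{j=1}^M (\bar m_j-m^d_j)\exp(i\zeta_j x),\qquad x\in(0,T),
\end{align*}
which is the restriction to~$(0,T)$ of an entire function of~$x$; in particular it is real-analytic on~$(0,T)$.

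Next I would observe that~$\bar p(x)=\int_0^x\nabla f(\bar u)(s)\dd s$ and~$\bar P(x)=-\int_0^x\bar p(s)\dd s$ are then analytic on~$(0,T)$, as iterated primitives of an analytic function, while Theorem~\ref{thm:necessaryopt} applied to the present~$f$ already yields~$\bar p,\bar P\in C_0(0,T)$. Since the frequencies~$\zeta_j$ are distinct, the operator~$H^\ast$ is injective, so the hypothesis~$\bar m\neq m^d$ forces~$\nabla f(\bar u)\not\equiv 0$, and therefore~$\bar p\not\equiv 0$ and~$\bar P\not\equiv 0$. All assumptions of Corollary~\ref{coroll:analytic} are thus met and~$\bar u$ is of the form~\eqref{def:sparseuintro}. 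As~$\bar u$ was an arbitrary solution, this proves the claim.

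The only point of substance is the explicit description of~$\nabla f(\bar u)$ as a trigonometric polynomial and, with it, the observation that analyticity of the dual variables is forced by the structure of the problem independently of the chosen minimizer (using uniqueness of~$\bar m$); there is no real obstacle, since Corollary~\ref{coroll:analytic} and the extremal-point characterization behind it carry all the weight. If one prefers not to invoke injectivity of~$H^\ast$, the degenerate case~$\nabla f(\bar u)\equiv 0$ can be disposed of directly: then~$\bar p\equiv\bar P\equiv 0$, and the support conditions~\eqref{eq:suppcondition} force~$D\bar w=0$ and~$D\bar u$ constant, so~$\bar u$ is affine linear, which is~\eqref{def:sparseuintro} with~$N_S=N_K=0$.
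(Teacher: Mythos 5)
Your proof is correct and follows essentially the same route as the paper: compute $\nabla f(\bar u)=H^\ast(\bar m-m^d)$ as a trigonometric polynomial (hence analytic), use uniqueness of the optimal observation and $\bar m\neq m^d$ together with linear independence of the frequencies to get $\nabla f(\bar u)\not\equiv 0$, and conclude via Corollary~\ref{coroll:analytic}. The only (harmless) addition is your explicit treatment of the degenerate case $\nabla f(\bar u)\equiv 0$, which the hypothesis already excludes.
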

\begin{proof}
Let~$\bar{u}$ be an arbitrary minimizer to~\eqref{def:deconvproblem} and let~$\bar{m}$ denote the unique optimal observation. We readily verify that~$\nabla f(\bar{u})$, and thus also~$\bar{p}$ and~$\bar{P}$, \eqref{def:pandPopt}, are analytic on~$(0,T)$ due to
\begin{align*}
\nabla f(\bar{u})(x)= \sum^M_{j=1} \operatorname{Re}(\bar{m}_j-m^d_j) \cos(\zeta_j x)-\sum^M_{j=1} \operatorname{Im}(\bar{m}_j-m^d_j) \sin(\zeta_j x).
\end{align*}
Moreover there holds~$\nabla f(\bar{u})\neq 0$ since the functions~$\cos(\zeta_j x),~\sin(\zeta_j x)$,~$j=1,\dots,M$, are linearly independent and~$\bar{m}-m^d \neq 0$. The claimed statement now follows immediately from Corollary~\ref{coroll:analytic}.
\end{proof}

In order to illustrate the advantages of solving Problem~\eqref{def:deconvproblem}, we return to the setup described in Figure~\ref{fig:invprobsetup}, i.e., we set
\begin{align*}
u^\dagger&=-4.5 K_2+ 5 S_{6.3}-8.2 K_{7.8}-8.3 S_{9.1}+  3x+2\\&=\bar{\mu}_1 K_2+ \bar{\mu}_2 S_{6.3}+\bar{\mu}_3 K_{7.8}+\bar{\mu}_4 S_{9.1}+  3x+2,
\end{align*}
see Figure~\ref{fig:groundill}, and choose~$T=10$ as well as $\zeta_j=j\cdot (10/9)  $,~$j=1,\dots,8$, as depicted in Figure~\ref{fig:setup}. Subsequently, a measurement vector~$m^d \in \mathbb{C}^8$ is generated by setting~$m^d =H u^\dagger+ \eps$ where~$\eps \in \mathbb{C}^8$ is a noise vector with~$\|\eps\|_{\mathbb{C}^8}/\|Hu^\dagger\|_{\mathbb{C}^8}=0.1$. The regularization parameters are chosen empirically as~$\alpha=2.205$ and~$\beta=2.5344$. By direct computation, it is readily verified that~$H$ is injective on~$\mathcal{L}$ in this case. Hence, Problem~\eqref{def:deconvproblem} admits a solution which can be computed using Algorithm~\ref{alg:accgcg}.
\subsection{Practical realization of Algorithm~\ref{alg:accgcg}}
Let us briefly discuss the practical realization of Algorithm~\ref{alg:accgcg} without discretizing the interval. For this purpose, we point out that~$ HK_x$,~$HS_x$ and~$H \ell$ admit a closed-form representation for every~$x \in (0,T)$,~$\ell \in \mathcal{L}$. Hence, given the k-th iterate~$u_k \in \operatorname{cone}(\mathcal{A}_k)+\mathcal{L}$, the measurements~$Hu_k$, the corresponding derivative
\begin{align*}
\nabla f(u_k)(x)= \sum^M_{j=1} \operatorname{Re}\big((Hu_k)_j-m^d_j\big) \cos(\zeta_j x)-\sum^M_{j=1} \operatorname{Im}\big((Hu_k)_j-m^d_j\big) \sin(\zeta_j x).
\end{align*}
as well as dual variables~$p_k,P_k$, see~\eqref{eq:iterduals}, can be computed without numerical approximation. Every iteration of Algorithm~\ref{alg:accgcg} then requires the solution of two subproblems. First, we have to determine global extrema of~$p_k$ and~$P_k$ in step 3. This is done by computing solutions of~$p'_k(x)=0$ and~$P'_k(x)=0$ using a Newton method starting at equally spaced points~$x^j_0= j  \cdot 0.1$,~$j=1,\dots,99$. Then~$\hat{x}^S_k,~\hat{x}^K_k \in (0,T) $ are chosen from the set of computed solutions by comparing the corresponding function values. Second, step 1 and 6, require the solution of the finite dimensional convex Problem~$(\mathcal{P}(\mathcal{A}^+_k))$. For this, we rely on a semismooth Newton method for the "normal map" reformulation of its first order sufficient optimality conditions. In each iteration the method is warmstarted using the current iterate to construct a good starting point. Moreover we further enhance its practical performance by incorporating a heuristic globalization strategy based on damped Newton steps. The method is run for a maximum of~$100$ iterations or until the constraint violation
\begin{align*}
\Psi(u_k)= \frac{\|m^d\|^2_{\mathbb{C}^8}}{2} \big( \max\left\{\|p_k\|_C/\alpha,\|P_k\|_C/\beta\right\}-1\big)
\end{align*}
satisfies~$\Psi(u_{\bar{k}})\leq 10^{-10}$ for some~$\bar{k}\leq 100$.
Note that, since~\eqref{def:deconvproblem} is convex, we also have
\begin{align*}
J(u_{\bar{k}})- \min_{u \in \BV(0,T)} J(u) \leq \widehat{r}_J(u_{\bar{k}})\leq \Psi(u_{\bar{k}})\leq 10^{-10}
\end{align*}
in virtue of Theorem \ref{thm:convrateconvex}. All of the following computations were carried out on Matlab 2019 on a notebook with~$32$ GB RAM and an Intel\textregistered Core\texttrademark ~i7-10870H CPU@2.20 GHz.

\subsection{Qualitative features of the reconstruction} \label{subsec:features}
Starting from~$\mathcal{A}_0=\{S_{7.5}\}$ and~$u_0=S_{7.5}$, the method stops after~$\bar{k}=50$ iterations since~$\Psi(u_{\bar{k}})\leq 10^{-10}$. In Figure~\ref{fig:sol} we plot the computed function~$\bar{u}=u_{\bar{k}}$ alongside the ground truth~$u^\dagger$. We point out that~$\bar{u}$ is piecewise affine linear and its kinks and jumps closely approximate those of~$u^\dagger$. At a first glance, it seems that both~$\bar{u}$ and~$u^\dagger$ share the same number of jumps/kinks. Upon a closer inspection, though, we observe that the kink at approximately $7.8$ is approximated by two kinks of equal sign in~$\bar{u}$. We assume that this ``clustering'' behavior is a consequence of numerical rounding errors since the positions of both jumps/kinks only differ by a magnitude of order~$10^{-7}$. Moreover, replacing both jumps/kinks by a single one of the combined magnitude at either of both candidate locations only introduces a relative error of~$10^{-16}$ in the objective functional value. In any case, the combined number of kinks and jumps in~$\bar{u}$ is considerably smaller than the theoretical upper bound of~$2M-2=14$ predicted by Proposition~\ref{prop:deconvex}. Next, the reconstruction of the coefficients is investigated. For a better visualization, we plot a Dirac delta with weight~$\bar{\mu}_j$,~$j=1,\dots,4$, at the position of the corresponding kink/jump (dashed lines for kinks, solid line for jumps) in Figure~\ref{fig:coefficients}. The recovered function is treated analogously with the exception of merging the aforementioned clustering kinks into one of combined magnitude.    \begin{figure}[htb]
\begin{subfigure}[t]{.49\linewidth}
\centering
\includegraphics[scale=0.5]{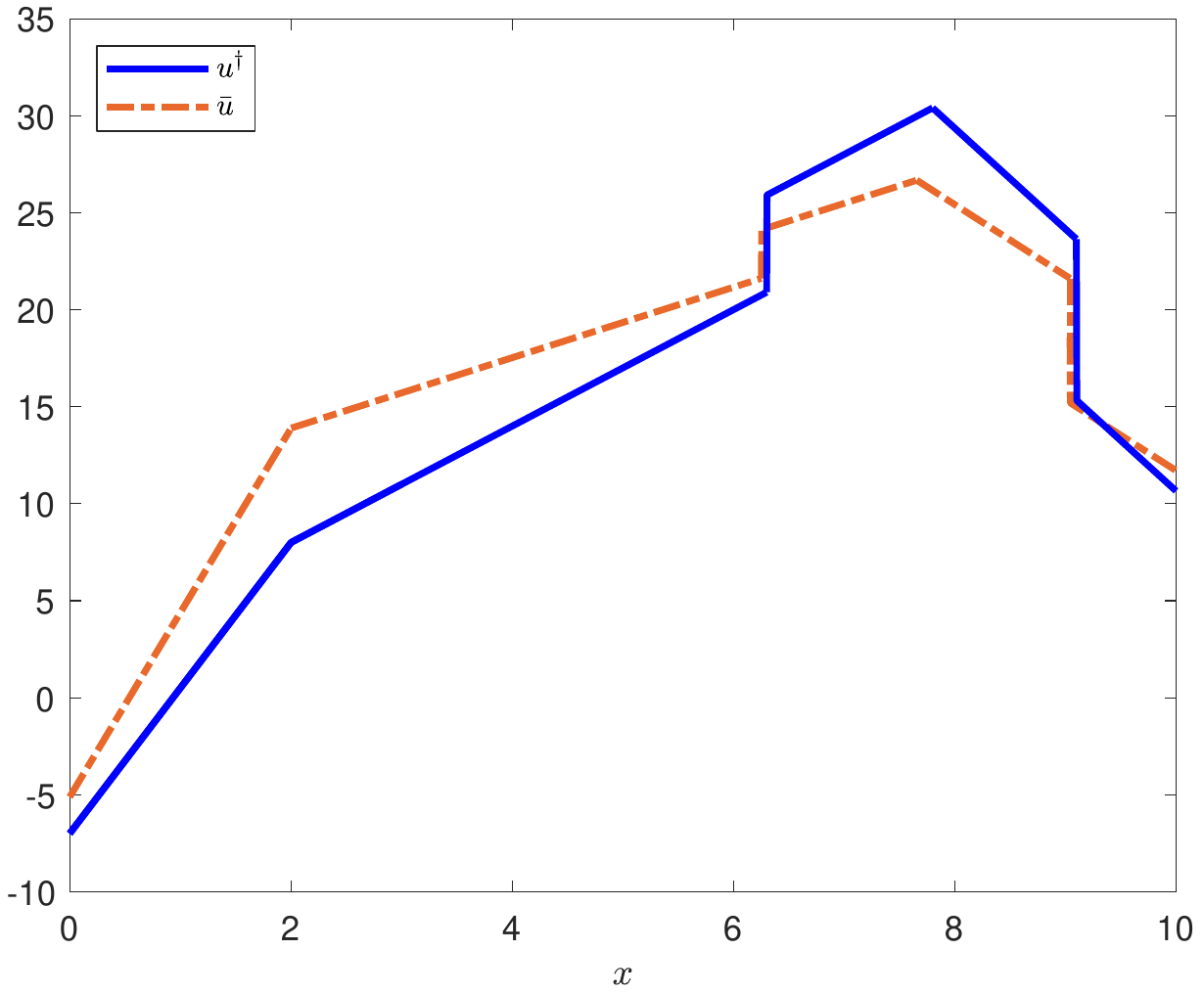}
\caption{Ground truth~$u^\dagger$ and~reconstruction~$\bar{u}$.}
\label{fig:sol}
\end{subfigure}
\quad
\begin{subfigure}[t]{.49\linewidth}
\centering
\includegraphics[scale=0.5]{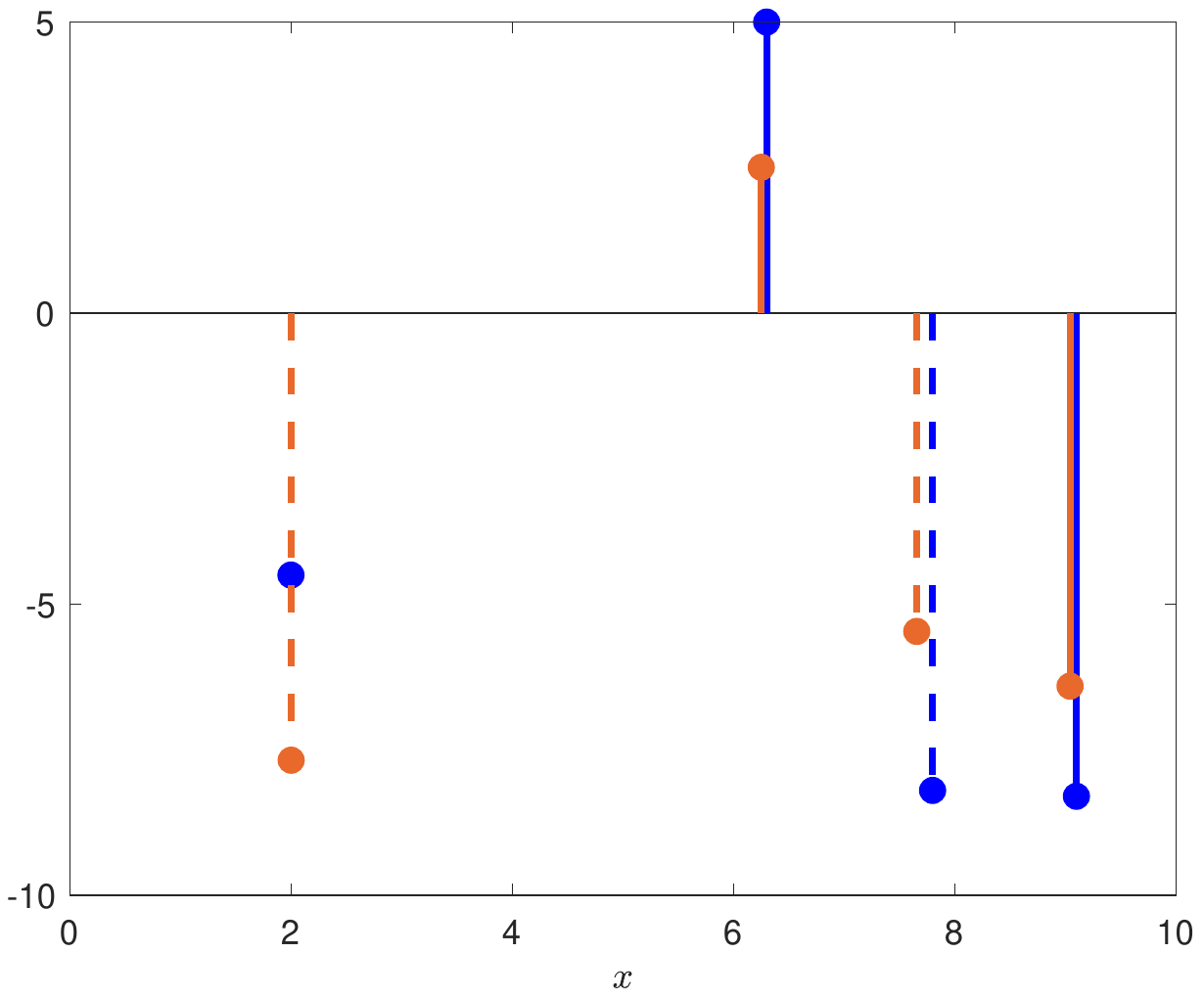}
\caption{Reference (blue) and computed (orange) coefficients.}
\label{fig:coefficients}
\end{subfigure}
\caption{Ground truth vs. reconstructions}
\label{fig:references}
\end{figure}
Observe that by solving~\eqref{def:deconvproblem} we successfully reconstruct the sign of every~$\bar{\mu}_j$ and provide decent estimates of its magnitude (after clustering). We also point out that, with the exception of the leftmost kink, the recovered coefficient is smaller than the reference. A possible explanation of this behavior can be found in Theorem~\ref{thm:finitextremasets} which states that the absolute values of the recovered coefficients are given by the solution to a minimization problem with an~$\ell_1$-type regularization term. For this type of problem, underestimation bias is a well-established phenomenon.
\subsection{Optimality conditions and performance of Algorithm 1}
We take the opportunity and verify the structural properties of~$\bar{u}$ derived in Theorem~\ref{thm:finitextremasets}. For this purpose, the dual variables~$\bar{p}$ and~$\bar{P}$ are plotted in Figures~\ref{fig:smallp} and~\ref{fig:largeP}. Function values that correspond to the positions of jumps in~$\bar{u}$ are marked by orange dots in Figure~\ref{fig:smallp}. The same is done for function values corresponding to jumps in Figure~\ref{fig:largeP}.
\begin{figure}[htb]
\begin{subfigure}[t]{.49\linewidth}
\centering
\includegraphics[scale=0.5]{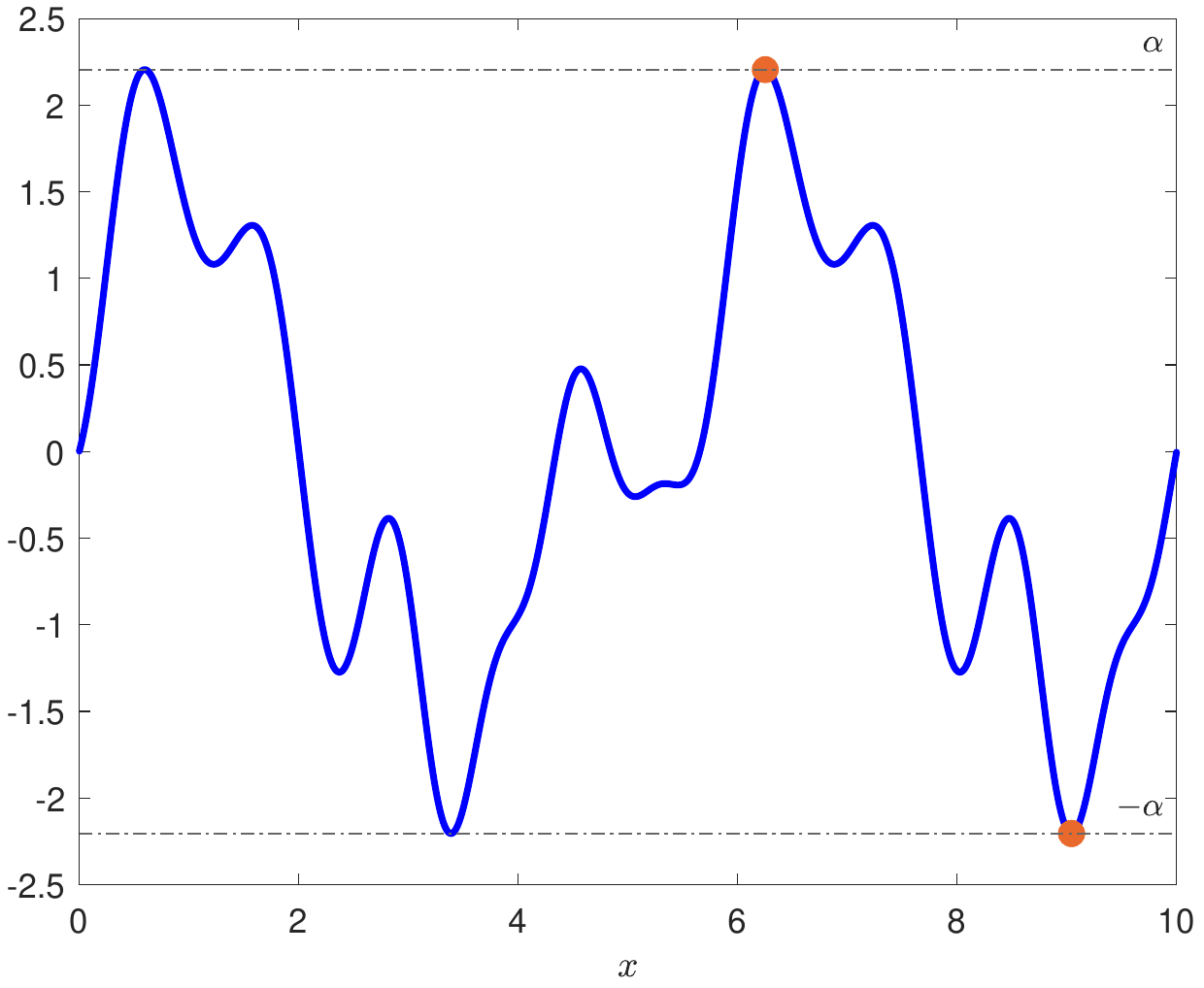}
\caption{First primitive~$\bar{p}$.}
\label{fig:smallp}
\end{subfigure}
\quad
\begin{subfigure}[t]{.49\linewidth}
\centering
\includegraphics[scale=0.5]{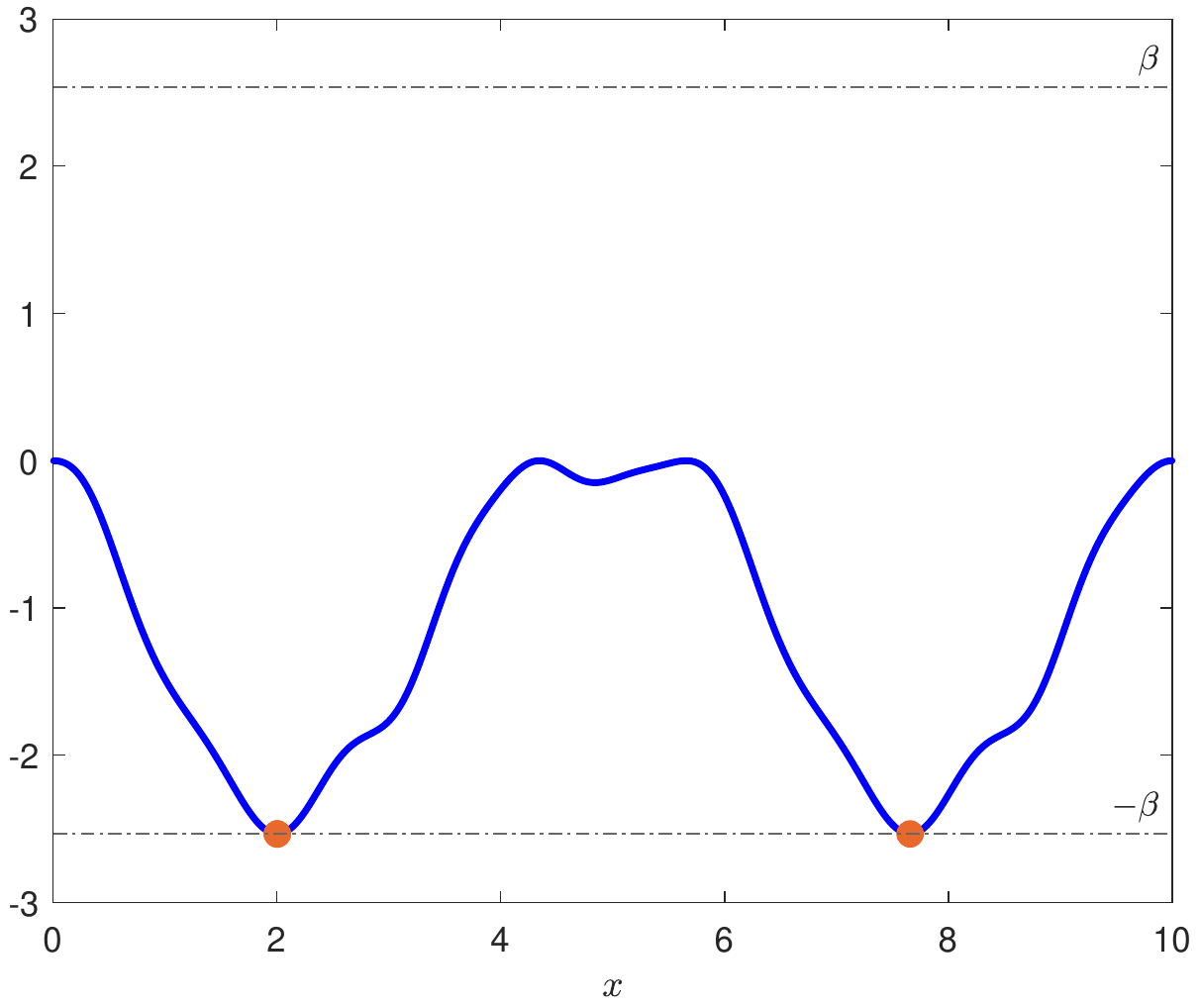}
\caption{Second primitive~$\bar{P}$.}
\label{fig:largeP}
\end{subfigure}
\caption{Dual variables.}
\label{fig:dual}
\end{figure}   
First, note that~$\bar{u}$ is nonzero. Hence, Theorem~\ref{thm:necessaryopt} implies that~$\|\bar{p}\|_C=\alpha$,~$\|\bar{P}\|_C=\beta$. This is readily verified from the plots. Furthermore, the set of global extremas for each function consists of finitely many points. Consequently, the positions of jumps and kinks of~$\bar{u}$ align with minimizers/maximizers of~$\bar{p}$ and~$\bar{P}$, respectively, as predicted by Theorem~\ref{thm:finitextremasets}. Moreover the sign of the recovered coefficients for every jump/kink coincides with~$\bar{p}/\alpha$ and $\bar{P}/\beta$, respectively, as expected.
\begin{figure}[htb]
\begin{subfigure}[t]{.49\linewidth}
\centering
\includegraphics[scale=0.5]{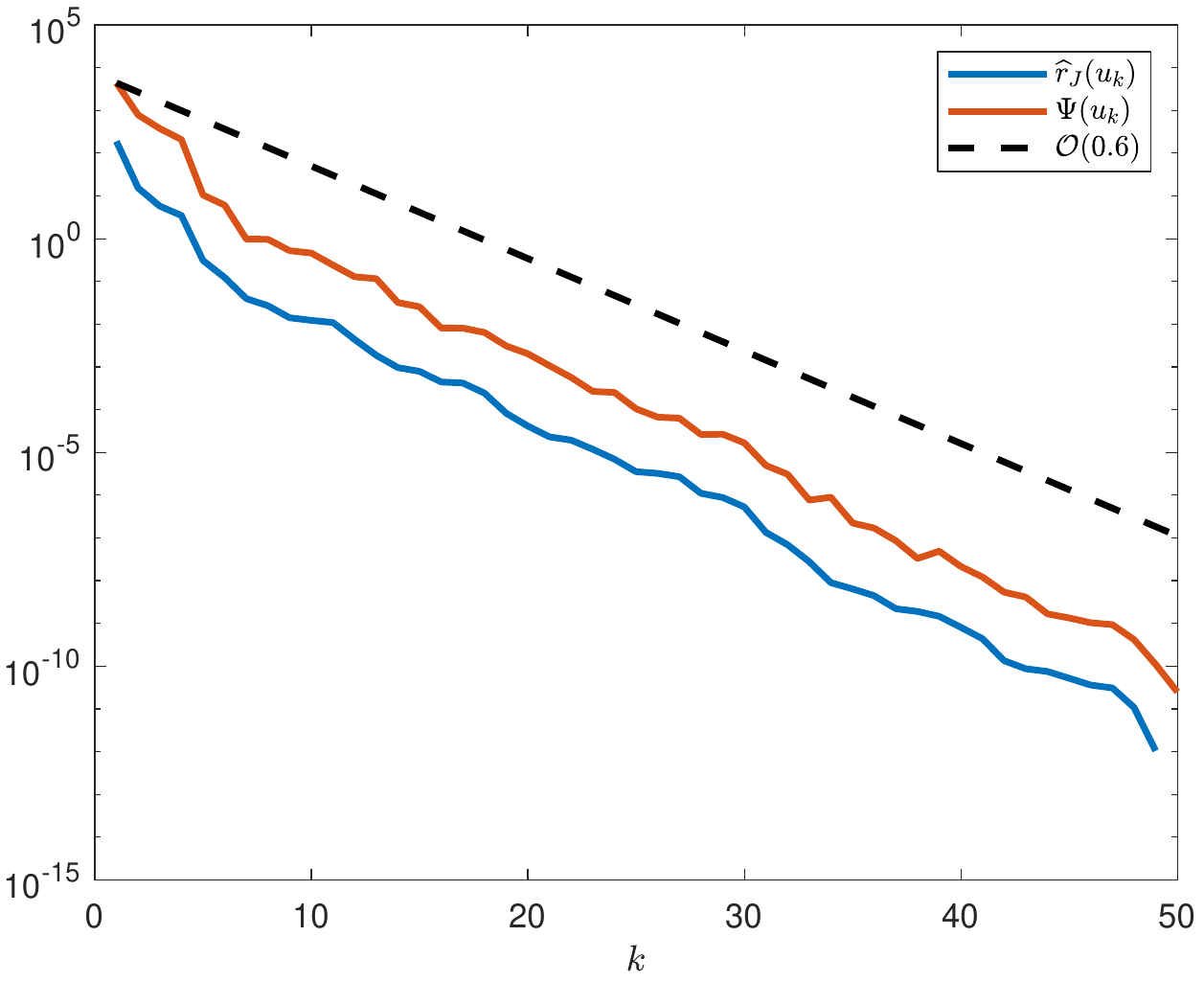}
\caption{$\widehat{r}_J(u_k)/\Psi(u_k)$ vs.~$k$.}
\label{fig:convrate}
\end{subfigure}
\quad
\begin{subfigure}[t]{.49\linewidth}
\centering
\includegraphics[scale=0.5]{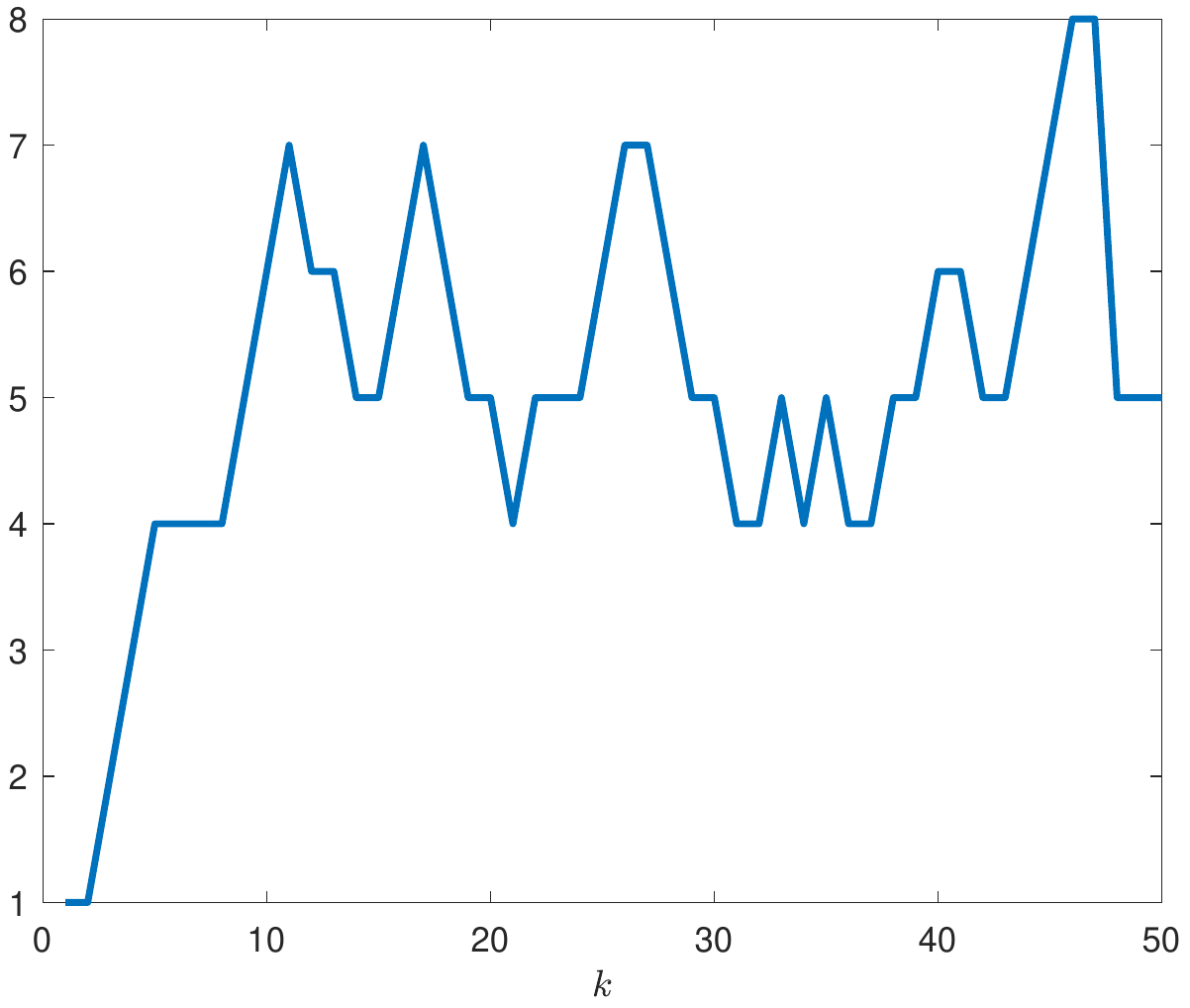}
\caption{$\#\mathcal{A}_k$ vs.~$k$.}
\label{fig:activesize}
\end{subfigure}
\caption{Evolution of relevant quantities.}
\label{fig:evol}
\end{figure}  
Finally, we plot the convergence of the constraint violation~$\Psi(u_k)$ as well as of the residual
\begin{align*}
\widehat{r}_J(u_k)= f(u_k)+\sum^{N_k}_{j=1} \lambda^k_j- \min_{u \in \BV(0,T)} J(u) \approx f(u_k)+\sum^{N_k}_{j=1} \lambda^k_j-f(u_{\bar{k}})+\sum^{N_{\bar{k}}}_{j=1} \lambda^{\bar{k}}_j. 
\end{align*} 
This approximation is justified by~$\widehat{r}_J(u_{\bar{k}})\leq 10^{-10}$. The resulting graphs can be found in Figure~\ref{fig:convrate}. Clearly, these computations suggest a linear rate of convergence for both quantities which is vastly better than the~$\mathcal{O}(1/k^{1/2})$ and~$\mathcal{O}(1/k)$ behavior predicted by Theorem~\ref{thm:convrate} and~\ref{thm:convrateconvex}, respectively. Alongside, in Figure~\ref{fig:activesize}, we plot the evolution of the active set size. Most remarkably,~$\#\mathcal{A}_k$ does not exceed~$8$ throughout the iterations. This highlights the importance of the point removal step in Algorithm~\ref{alg:accgcg}. Moreover, keeping~$\#\mathcal{A}_k$ small in combination with a warm-started semismooth Newton method leads to a fast resolution of the subproblems~$(\mathcal{P}(\mathcal{A}^+_k))$. In fact, the overall computation time of Algorithm~\ref{alg:accgcg} for the present example is merely $4.5$ s.    
In summary, these observations show the practical efficiency of methods in the spirit of Algorithm~\ref{alg:accgcg} for TGV-regularization. Of course, they also spark the demand of further research concerning e.g. an improved quantitative convergence analysis of Algorithm~\ref{alg:accgcg}. However, since the focus of the present manuscript lies on the characterization of the extremal points and immediate consequences thereof, thorough investigations are postponed to a follow-up work.  
\appendix
\appendix
\section{A counterexample} \label{sec:counterexample}
In this appendix we finally give the counterexample announced in Remark~\ref{rem:altsubrpob}. More in detail, a set~$\mathcal{A}=\{u_j\}^N_{j=1}$ in~$\ext B$ as well as a convex and smooth function~$f \colon L^2(0,T) \to \R$ with~$\min \eqref{eq:altsubprob} < \min \eqref{def:subprob}$
are constructed. For this purpose, choose~$T=10$ and~$\alpha, \beta>0$ with~$\beta/\alpha=1/2$. Moreover set~$\bar{x}_1=6$,~$\bar{x}_2=6.25$ and fix~$\bar{\lambda}_1,\bar{\lambda}_2>0$ such that
\begin{align*}
|\bar{\lambda}_2-\bar{\lambda}_1|< 0.25,~ 3.75 < \bar{\lambda}_1+\bar{\lambda}_2.
\end{align*}
Under this prerequisites, the function~$\bar{u}=\bar{\lambda}_1 u_1+\bar{\lambda}_2 u_2$ with~$u_1=K_{\bar{x}_1}/\beta$ and~$u_2=-K_{\bar{x}_2}/\beta$ satisfies~$\TGV(\bar{u})< \bar{\lambda}_1+\bar{\lambda}_2$, see Remark~\ref{rem:counterexample}.
The following proposition provides an outline for the construction of a counterexample.
\begin{prop} \label{s}
Assume that there is~$\bar{\varphi} \in L^2(0,T) $ such that
\begin{align} \label{eq:condonphi}
(u_j,\bar{\varphi})_{L^2}+1=0 \quad \text{for all}~ j=1,2 ,~(\ell,\bar{\varphi})_{L^2}=0 \quad \text{for all}~\ell \in \Lc.
\end{align}
Then~$(\bar{\lambda}_1,\bar{\lambda}_2,0) \in \R^2_+ \times \Lc$ is a minimizer of 
\begin{align} \label{probcounter}
\min_{\lambda \in \R^2_+,\ell \in \Lc} \left \lbrack \frac{1}{2} \int^{10}_0 \Big|\sum^2_{j=1}\lambda_ju_j(x)-u_d(x)\Big|^2~\mathrm{d}x+ \sum^2_{j=1}\lambda_j \right \rbrack  \tag{$\mathfrak{P}$}
\end{align}
for~$u_d=\bar{u}-\bar{\varphi}$.
\end{prop}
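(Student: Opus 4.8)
The plan is to read \eqref{probcounter} as a convex program and to verify its first-order optimality conditions directly at the point $(\bar\lambda_1,\bar\lambda_2,0)$. Write the objective as
\[
G(\lambda) = \tfrac12\big\|\lambda_1 u_1 + \lambda_2 u_2 - u_d\big\|_{L^2(0,10)}^2 + \lambda_1 + \lambda_2,
\]
which does not depend on $\ell$ and, as a function of $\lambda\in\R^2$, is a convex quadratic plus a linear term; since $u_1 = K_{\bar x_1}/\beta$ and $u_2 = -K_{\bar x_2}/\beta$ are linearly independent in $L^2(0,10)$, the Gram matrix of the quadratic part is positive definite and $G$ is in fact strictly convex. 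Consequently a feasible pair $(\lambda,\ell)\in\R^2_+\times\Lc$ is a minimizer of \eqref{probcounter} if and only if $\lambda$ solves the variational inequality $\langle\nabla G(\lambda),\mu-\lambda\rangle\ge 0$ for all $\mu\in\R^2_+$; and since the candidate $\bar\lambda=(\bar\lambda_1,\bar\lambda_2)$ has strictly positive components and therefore lies in the interior of $\R^2_+$, this reduces to the stationarity condition $\nabla G(\bar\lambda)=0$.

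It then remains to compute the gradient and evaluate it at $\bar\lambda$. One has $\partial_{\lambda_i}G(\lambda) = (\lambda_1 u_1 + \lambda_2 u_2 - u_d,\,u_i)_{L^2} + 1$ for $i=1,2$. At $\lambda=\bar\lambda$, the definition of $\bar u$ together with $u_d=\bar u-\bar\varphi$ gives $\bar\lambda_1 u_1 + \bar\lambda_2 u_2 - u_d = \bar u - (\bar u-\bar\varphi) = \bar\varphi$, hence $\partial_{\lambda_i}G(\bar\lambda) = (\bar\varphi,u_i)_{L^2} + 1$, which vanishes for $i=1,2$ by the first condition in \eqref{eq:condonphi}. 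Thus $\nabla G(\bar\lambda)=0$ and, by convexity, $(\bar\lambda_1,\bar\lambda_2,0)$ is a global minimizer of \eqref{probcounter}, unique in the $\lambda$-component by strict convexity. The second condition in \eqref{eq:condonphi} is not needed for this computation, but it is exactly what keeps $\ell=0$ optimal if one inserts the kernel variable into the least-squares term as in \eqref{def:subprob}: the corresponding $\partial_\ell$-stationarity then reads $(\bar u-u_d,\ell)_{L^2} = (\bar\varphi,\ell)_{L^2}=0$ for all $\ell\in\Lc$.

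I expect no genuine obstacle in this proposition itself: once the substitution $\bar u-u_d=\bar\varphi$ is made it is a one-line KKT verification, with convexity upgrading stationarity to global optimality and uniqueness. The real content of the counterexample lies elsewhere, namely in exhibiting a concrete $\bar\varphi\in L^2(0,10)$ satisfying both requirements in \eqref{eq:condonphi} for the specific data $\bar x_1=6$, $\bar x_2=6.25$ --- a finite-dimensional interpolation-type task --- and then in combining the present statement with Remark~\ref{rem:counterexample}: with $\mathcal{A}=\{u_1,u_2\}$ and $f(u)=\tfrac12\|u-u_d\|_{L^2(0,10)}^2$, the former gives $\min\eqref{def:subprob}=f(\bar u)+\bar\lambda_1+\bar\lambda_2$, while the latter gives $\TGV(\bar u)<\bar\lambda_1+\bar\lambda_2$, so that $\min\eqref{eq:altsubprob}\le f(\bar u)+\TGV(\bar u)<f(\bar u)+\bar\lambda_1+\bar\lambda_2=\min\eqref{def:subprob}$, which is the strict separation announced in Remark~\ref{rem:altsubrpob}.
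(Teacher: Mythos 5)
Your proof is correct and follows essentially the same route as the paper: verify that the partial derivatives of the convex objective vanish at $(\bar\lambda_1,\bar\lambda_2,0)$ using $\bar\lambda_1 u_1+\bar\lambda_2 u_2-u_d=\bar\varphi$ and \eqref{eq:condonphi}, then invoke convexity to conclude global optimality. Your side remark correctly identifies that the displayed objective in \eqref{probcounter} omits $\ell$ from the least-squares term (whereas the paper's own proof differentiates with respect to $\ell$, consistent with the intended form $f\bigl(\sum_j\lambda_j u_j+\ell\bigr)+\sum_j\lambda_j$ from \eqref{def:subprob}), and your treatment covers both readings.
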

\begin{proof}
First, note that the objective functional of~\eqref{probcounter} is convex in~$(\lambda_1,\lambda_2,\ell)$.
Calculating its partial derivatives at~$(\bar{\lambda}_1,\bar{\lambda}_2,0)$ reveals 
\begin{align*}
(u_j, \bar{u}-u_d)_{L^2}+1=(u_j, \bar{\varphi})_{L^2}+1=0 \quad \text{for all}~ j=1,2
\end{align*}
as well as
\begin{align}
(\ell, \bar{u}-f)_{L^2}=(\ell, \bar{\varphi})_{L^2}=0 \quad \text{for all}~\ell \in \Lc.
\end{align}
Consequently,~$(\bar{\lambda}_1,\bar{\lambda}_2,0)$ is a stationary point of \eqref{probcounter} and thus a minimizer.
\end{proof}
In particular, if~$\bar{\varphi}$ exists we have
\begin{align*}
\min_{\lambda \in \R^2_+,\ell \in \Lc} \left \lbrack \frac{1}{2} \int^{10}_0 \Big|\sum^2_{j=1}\lambda_j u_j(x)-u_d(x)\Big|^2~\mathrm{d}x+ \TGV \left(\sum^2_{j=1}\lambda_j u_j\right)\right \rbrack <  \min \eqref{probcounter}
\end{align*}
due to~$\TGV(\bar{u})< \bar{\lambda}_1+\bar{\lambda}_2$. In order to construct such a function, define
\begin{align*}
w_1(x)= \begin{cases}
1 & \bar{x}_1 \leq x \leq \bar{x}_2\\
0 & \text{else}
\end{cases},~
w_2(x)= \begin{cases}
1-\frac{3}{8}K_{\bar{x}_1}(x)  & \bar{x}_1 \leq x \leq \bar{x}_2\\
0 & \text{else}
\end{cases},~\ell_1(x)=1,~\ell_2(x)=x-5.
\end{align*}
Note that
\begin{align*}
(u_1,w_1)_{L^2} \neq 0,~(u_1,w_2)_{L^2}=0,~(u_2,w_2)_{L^2} \neq 0,~(u_2,w_1)_{L^2}=0
\end{align*}
as well as
\begin{align*}
(\ell_1,\ell_2)_{L^2} =0,~\operatorname{span}\{\ell_1,\ell_2\}=\Lc.
\end{align*}
Now, make the ansatz
\begin{align} \label{ansatzphi}
\varphi(\gamma_1,\gamma_2,\gamma_3,\gamma_4) \coloneqq \gamma_1 w_1+ \gamma_2 w_2+ \gamma_3 \ell_1+ \gamma_4 \ell_4.
\end{align}
The following proposition yields the desired counterexample.
\begin{prop}
There is~$(\bar{\gamma}_1,\bar{\gamma}_2,\bar{\gamma}_3,\bar{\gamma}_4)$ such that~$\bar{\varphi}\coloneqq \varphi (\bar{\gamma}_1,\bar{\gamma}_2,\bar{\gamma}_3,\bar{\gamma}_4)$ satisfies~\eqref{eq:condonphi}.
\end{prop}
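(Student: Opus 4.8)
The plan is to read \eqref{eq:condonphi} as a system of four linear equations in the four unknowns $(\gamma_1,\gamma_2,\gamma_3,\gamma_4)$ and to solve it by eliminating $\gamma_3,\gamma_4$ first. Since $\mathcal{L}=\operatorname{span}\{\ell_1,\ell_2\}$ and $(\ell_1,\ell_2)_{L^2}=0$, the requirement ``$(\ell,\bar\varphi)_{L^2}=0$ for all $\ell\in\mathcal{L}$'' is equivalent to the two scalar equations $(\ell_1,\varphi(\gamma))_{L^2}=(\ell_2,\varphi(\gamma))_{L^2}=0$, and — again using $(\ell_1,\ell_2)_{L^2}=0$ and $\|\ell_k\|_{L^2}>0$ — each of these solves explicitly for one of $\gamma_3,\gamma_4$ as a linear function of $\gamma_1,\gamma_2$:
\[\gamma_3=-\frac{\gamma_1(\ell_1,w_1)_{L^2}+\gamma_2(\ell_1,w_2)_{L^2}}{\|\ell_1\|_{L^2}^2},\qquad\gamma_4=-\frac{\gamma_1(\ell_2,w_1)_{L^2}+\gamma_2(\ell_2,w_2)_{L^2}}{\|\ell_2\|_{L^2}^2}.\]
Substituting these into the remaining conditions $(u_j,\varphi(\gamma))_{L^2}=-1$, $j=1,2$, one notices that the $\gamma_3,\gamma_4$-contributions reassemble exactly the $L^2(0,T)$-orthogonal projection $P_{\mathcal{L}}w_i$ of $w_i$ onto $\mathcal{L}$, so the system collapses to the $2\times2$ linear system
\[M\begin{pmatrix}\gamma_1\\\gamma_2\end{pmatrix}=\begin{pmatrix}-1\\-1\end{pmatrix},\qquad M_{ji}=(u_j,\,w_i-P_{\mathcal{L}}w_i)_{L^2},\quad j,i\in\{1,2\}.\]
Equivalently, $M$ is the Schur complement of the invertible diagonal $\mathcal{L}$-block in the $4\times4$ coefficient matrix of \eqref{eq:condonphi}, so it suffices to show $M$ is invertible; then $(\gamma_1,\gamma_2)^\top=M^{-1}(-1,-1)^\top$ together with the formulas above for $\gamma_3,\gamma_4$ produce the desired $\bar\gamma$ and hence $\bar\varphi$.

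It therefore remains to check $\det M\neq 0$, and this is the only genuinely computational step. The relations recorded just before the statement already show that $M$ is ``almost diagonal'': $(u_1,w_2)_{L^2}=0$, $(u_2,w_1)_{L^2}=0$, while the diagonal entries $(u_1,w_1)_{L^2}$ and $(u_2,w_2)_{L^2}$ are nonzero, so the only danger is that the correction terms $(u_j,P_{\mathcal{L}}w_i)_{L^2}$ conspire to cancel the diagonal. To rule this out I would simply evaluate all occurring inner products: $u_1$, $u_2$, $w_1$, $w_2$ are explicit piecewise-polynomial functions on $(0,T)$ for the chosen data $T=10$, $\bar x_1=6$, $\bar x_2=6.25$, $\beta/\alpha=1/2$ (with $w_1,w_2$ supported on $[\bar x_1,\bar x_2]$, where $u_1,u_2$ have simple closed forms), and $\mathcal{L}=\operatorname{span}\{1,\,x-5\}$ with its two generators already orthogonal on $(0,10)$, so that $\|\ell_k\|_{L^2}^2$, $(\ell_k,w_i)_{L^2}$, $(u_j,\ell_k)_{L^2}$ and $(u_j,w_i)_{L^2}$ are all elementary integrals. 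Carrying these out yields an explicit nonzero value of $\det M$; the particular coefficient $3/8$ in the definition of $w_2$ and the location of $\bar x_1$ are precisely what is tuned so that this happens.

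With $\det M\neq 0$ in hand, the existence of $\bar\gamma=(\bar\gamma_1,\bar\gamma_2,\bar\gamma_3,\bar\gamma_4)$ solving \eqref{eq:condonphi} follows at once, and combining it with Proposition~\ref{s} and the estimate $\TGV(\bar u)<\bar\lambda_1+\bar\lambda_2$ from Remark~\ref{rem:counterexample} completes the counterexample announced in Remark~\ref{rem:altsubrpob}. The main obstacle is purely the bookkeeping in the determinant computation; conceptually the argument is just Gaussian elimination on a $4\times4$ system whose $\mathcal{L}$-block is diagonalized for free by the orthogonal choice of $\ell_1,\ell_2$.
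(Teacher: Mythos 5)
Your proposal is correct and follows essentially the same route as the paper: the paper likewise inserts the ansatz into \eqref{eq:condonphi}, writes down the resulting $4\times 4$ linear system, and concludes by asserting that explicit calculation of the entries shows the coefficient matrix is invertible. Your Schur-complement organization (eliminating $\gamma_3,\gamma_4$ via the orthogonality of $\ell_1,\ell_2$ and reducing to a $2\times 2$ determinant) is a tidier presentation of the same Gaussian elimination, but the decisive step --- the unperformed explicit evaluation of the elementary integrals certifying nonvanishing of the determinant --- is identical in both arguments.
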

\begin{proof}
Inserting the ansatz~\eqref{ansatzphi} into~\eqref{eq:condonphi} yields the linear system
\begin{align*}
\begin{pmatrix}
  (u_1,w_1)_{L^2} & 0 & (u_1,\ell_1)_{L^2} & (u_1,\ell_2)_{L^2} \\
0 &(u_2,w_2)_{L^2} & (u_2,\ell_1)_{L^2} & (u_2,\ell_2)_{L^2} \\
(\ell_1,w_1)_{L^2}  & (\ell_1,w_2)_{L^2}  & (\ell_1,\ell_1)_{L^2} & 0  \\(\ell_2,w_1)_{L^2}  & (\ell_2,w_2)_{L^2}  & 0& (\ell_2,\ell_2)_{L^2} 
\end{pmatrix}
\begin{pmatrix}
\gamma_1 \\
\gamma_2 \\
\gamma_3  \\
\gamma_4 
\end{pmatrix}=
\begin{pmatrix}
1\\
1 \\
0  \\
0 
\end{pmatrix}
\end{align*}
Explicitly calculating the entries of this matrix reveals its invertibility and thus the existence of a solution~$(\bar{\gamma}_1,\bar{\gamma}_2,\bar{\gamma}_3,\bar{\gamma}_4)$. The statement follows.
\end{proof}
\section*{Acknowledgments} We wish to thank Kristian Bredies for very interesting discussions and suggestions regarding the content of this work. The work of J.A.I. was partially supported by the State of Upper Austria.

\bibliographystyle{plain}
\bibliography{extreme_tgv}

\begin{thebibliography}{10}

\bibitem{AmbFusPal00}
L.~Ambrosio, N.~Fusco, and D.~Pallara.
\newblock {\em Functions of bounded variation and free discontinuity problems}.
\newblock Oxford Mathematical Monographs. Oxford University Press, New York,
  2000.

\bibitem{Boy}
C.~Boyer, A.~Chambolle, Y.~De~Castro, V.~Duval, F.~De~Gournay, and P.~Weiss.
\newblock {On representer theorems and convex regularization}.
\newblock {\em {SIAM J. Optim.}}, 29(2):1260--1281, 2019.

\bibitem{BreCar20}
K.~Bredies and M.~Carioni.
\newblock Sparsity of solutions for variational inverse problems with
  finite-dimensional data.
\newblock {\em Calc. Var. Partial Differential Equations}, 59(1):14, 2020.

\bibitem{BredWal}
K.~Bredies, M.~Carioni, S.~Fanzon, and D.~Walter.
\newblock Linear convergence of accelerated generalized conditional gradient
  methods.
\newblock Preprint arXiv:2110.06756 [math.OC], 2021.

\bibitem{BreHol14}
K.~Bredies and M.~Holler.
\newblock Regularization of linear inverse problems with total generalized
  variation.
\newblock {\em J. Inverse Ill-Posed Probl.}, 22(6):871--913, 2014.

\bibitem{BreHol20}
K.~Bredies and M.~Holler.
\newblock Higher-order total variation approaches and generalisations.
\newblock {\em Inverse Probl.}, 36(12):123001, 128, 2020.

\bibitem{BreKunPoc10}
K.~Bredies, K.~Kunisch, and T.~Pock.
\newblock Total generalized variation.
\newblock {\em SIAM J. Imaging Sci.}, 3(3):492--526, 2010.

\bibitem{BreKunVal13}
K.~Bredies, K.~Kunisch, and T.~Valkonen.
\newblock Properties of {$L^1$}-{${\rm TGV}^2$}: the one-dimensional case.
\newblock {\em J. Math. Anal. Appl.}, 398(1):438--454, 2013.

\bibitem{BreVal11}
K.~Bredies and T.~Valkonen.
\newblock Inverse problems with second-order total generalized variation
  constraints.
\newblock In {\em Proceedings of the 9th International Conference on Sampling
  Theory and Applications (SampTA), Singapore}, 2011.
\newblock arXiv:2005.09725.

\bibitem{KKsparse}
E.~Casas, C.~Clason, and K.~Kunisch.
\newblock {Parabolic control problems in measure spaces with sparse solutions}.
\newblock {\em {SIAM J. Control Optim.}}, 51(1):28--63, 2013.

\bibitem{ChaPoc11}
A.~Chambolle and T.~Pock.
\newblock A first-order primal-dual algorithm for convex problems with
  applications to imaging.
\newblock {\em J. Math. Imaging Vision}, 40(1):120--145, 2011.

\bibitem{ReyLoa19}
J.~C. De~los Reyes and E.~Loayza-Romero.
\newblock Total generalized variation regularization in data assimilation for
  {B}urgers' equation.
\newblock {\em Inverse Probl. Imaging}, 13(4):755--786, 2019.

\bibitem{DonGoeTor15}
Y.~Dong, T.~G\"{o}rner, and S.~Kunis.
\newblock An algorithm for total variation regularized photoacoustic imaging.
\newblock {\em Adv. Comput. Math.}, 41(2):423--438, 2015.

\bibitem{dunn}
J.~C. Dunn.
\newblock Rates of convergence for conditional gradient algorithms near
  singular and nonsingular extremals.
\newblock {\em SIAM J. Control Optim.}, 17(2):187--211, 1979.

\bibitem{DuvAujGou09}
V.~Duval, J.-F. Aujol, and Y.~Gousseau.
\newblock The {TVL}1 model: a geometric point of view.
\newblock {\em Multiscale Model. Simul.}, 8(1):154--189, 2009.

\bibitem{FabEtAl01}
M.~Fabian, P.~Habala, P.~H\'{a}jek, V.~Montesinos~Santaluc\'{\i}a, J.~Pelant,
  and V.~Zizler.
\newblock {\em Functional analysis and infinite-dimensional geometry}, volume~8
  of {\em CMS Books in Mathematics/Ouvrages de Math\'{e}matiques de la SMC}.
\newblock Springer-Verlag, New York, 2001.

\bibitem{FreClaSch10}
M.~Freiberger, C.~Clason, and M.~Scharfetter.
\newblock Total variation regularization for nonlinear fluorescence tomography
  with an augmented lagrangian splitting approach.
\newblock {\em Appl. Opt.}, 49(19):3741--3747, 2010.

\bibitem{KitConHir19}
D.~Kitahara, L.~Condat, and A.~Hirabayashi.
\newblock One-dimensional edge-preserving spline smoothing for estimation of
  piecewise smooth functions.
\newblock In {\em ICASSP 2019 - 2019 IEEE International Conference on
  Acoustics, Speech and Signal Processing (ICASSP)}, pages 5611--5615, 2019.

\bibitem{KurYamYam18}
H.~Kuroda, M.~Yamagishi, and I.~Yamada.
\newblock Exploiting sparsity in tight-dimensional spaces for piecewise
  continuous signal recovery.
\newblock {\em IEEE Trans. Signal Process.}, 66(24):6363--6376, 2018.

\bibitem{LacosteJulien16}
S.~Lacoste-Julien.
\newblock Convergence rate of {F}rank-{W}olfe for non-convex objectives.
\newblock Preprint arXiv:1607.00345 [math.OC], 2016.

\bibitem{nesterov}
Y.~Nesterov.
\newblock {\em Introductory lectures on convex optimization}, volume~87 of {\em
  Applied Optimization}.
\newblock Kluwer Academic Publishers, Boston, MA, 2004.

\bibitem{OngJac15}
G.~Ongie and M.~Jacob.
\newblock Recovery of discontinuous signals using group sparse higher degree
  total variation.
\newblock {\em IEEE Signal Process. Lett.}, 22(9):1414--1418, 2015.

\bibitem{PapBre15}
K.~Papafitsoros and K.~Bredies.
\newblock A study of the one dimensional total generalised variation
  regularisation problem.
\newblock {\em Inverse Probl. Imaging}, 9(2):511--550, 2015.

\bibitem{PapVal15}
K.~Papafitsoros and T.~Valkonen.
\newblock Asymptotic behaviour of total generalised variation.
\newblock In {\em Scale space and variational methods in computer vision},
  volume 9087 of {\em Lecture Notes in Comput. Sci.}, pages 720--714. Springer,
  2015.

\bibitem{PoeSch15}
C.~P\"{o}schl and O.~Scherzer.
\newblock Exact solutions of one-dimensional total generalized variation.
\newblock {\em Commun. Math. Sci.}, 13(1):171--202, 2015.

\bibitem{RudOshFat92}
L.~I. Rudin, S.~Osher, and E.~Fatemi.
\newblock Nonlinear total variation based noise removal algorithms.
\newblock {\em Phys. D}, 60(1-4):259--268, 1992.

\bibitem{Sel15}
I.~W. Selesnick.
\newblock Generalized total variation: Tying the knots.
\newblock {\em IEEE Signal Process. Lett.}, 22(11):2009--2013, 2015.

\bibitem{Val17}
T.~Valkonen.
\newblock The jump set under geometric regularisation. {P}art 2: {H}igher-order
  approaches.
\newblock {\em J. Math. Anal. Appl.}, 453(2):1044--1085, 2017.

\end{thebibliography}

\end{document}